\DeclareMathOperator{\R}{\mathbb{R}}
\DeclareMathOperator{\N}{\mathbb{N}}
\DeclareMathOperator{\eps}{\varepsilon}
\numberwithin{equation}{section}
\newtheorem{theorem}{Theorem}[section]
\newtheorem{lemma}[theorem]{Lemma}
\newtheorem{corollary}[theorem]{Corollary}
\newtheorem{proposition}[theorem]{Proposition}
\theoremstyle{definition}
\newtheorem{definition}[theorem]{Definition}
\newtheorem{remark}[theorem]{Remark}
\newtheorem{assumption}[theorem]{Assumption}
\newtheorem{example}[theorem]{Example}
\newcommand{\cB}{\mathcal{B}}
\newcommand{\cD}{\mathcal{D}}
\newcommand{\cH}{\mathcal{H}}
\newcommand{\cI}{\mathcal{I}}
\newcommand{\cM}{\mathcal{M}}
\newcommand{\cX}{\mathcal{X}}
\newcommand{\bA}{\mathbb{A}}
\newcommand{\bB}{\mathbb{B}}
\newcommand{\bH}{\mathbb{H}}
\newcommand{\bN}{\mathbb{N}}
\newcommand{\bR}{\mathbb{R}}
\newcommand{\bONE}{\mathbbm{1}}
\newcommand{\bfz}{\mathbf{z}}
\newcommand{\dd}{ \mathrm{d}}
\def\eps{\varepsilon}
\DeclareMathOperator{\Tr}{Tr}
\newcommand{\vn}[1]{\left| \! \left| #1\right| \!\right|}
\newcommand{\ip}[2]{\left\langle #1,#2 \right\rangle}
\newcommand{\ri}[1]{{\color{teal}#1}}
\DeclareRobustCommand{\rchi}{{\mathpalette\irchi\relax}}
\newcommand{\irchi}[2]{\raisebox{\depth}{$#1\chi$}} % inner command, used by \rchi
\newcommand{\ssup}[1]{\left\lceil #1 \right\rceil}
\newcommand{\iinf}[1]{\left\lfloor #1 \right\rfloor}
\title[A comparison principle based on couplings]{A comparison principle based on couplings of partial integro-differential operators}
\author{Serena Della Corte}
\address{Delft Institute of Applied Mathematics, Delft University of Technology, The Netherlands}
\email{s.dellacorte@tudelft.nl}
\author{Fabian Fuchs}
\address{Center for Mathematical Economics, Bielefeld University, Germany}
\email{fabian.fuchs@uni-bielefeld.de}
\author{Richard C. Kraaij}
\address{Delft Institute of Applied Mathematics, Delft University of Technology, The Netherlands}
\email{r.c.kraaij@tudelft.nl}
\author{Max Nendel}
\address{Center for Mathematical Economics, Bielefeld University, Germany}
\email{max.nendel@uni-bielefeld.de}
\thanks{This work was funded by the Deutsche Forschungsgemeinschaft (DFG, German Research Foundation) -- SFB 1283/2 2021 -- 317210226 and by The Netherlands Organisation for Scientific Research (NWO), grant number 613.009.148}
\date{\today}
\begin{document}

\begin{abstract}
    This paper is concerned with a comparison principle for viscosity solutions to Hamilton--Jacobi (HJ), --Bellman (HJB), and --Isaacs (HJI) equations for general classes of partial integro-differential operators.
    Our approach innovates in three ways:\ (1) We reinterpret the classical doubling-of-variables method in the context of second-order equations by casting the Ishii--Crandall Lemma into a test function framework. This adaptation allows us to effectively handle non-local integral operators, such as those associated with Lévy processes. (2) We translate the key estimate on the difference of Hamiltonians in terms of an adaptation of the probabilistic notion of couplings, providing a unified approach that applies to differential, difference, and integral operators. (3) We strengthen the sup-norm contractivity resulting from the comparison principle to one that encodes continuity in the strict topology.
    We apply our theory to a variety of examples, in particular, to second-order differential operators and, more generally, generators of spatially inhomogeneous Lévy processes.\smallskip
    
    \noindent\textit{Keywords:} Comparison principle, viscosity solution, Hamilton--Jacobi-Bellman--Isaacs equation, coupling of operators, Lyapunov function, Jensen perturbation, mixed topology. \smallskip
    
    \noindent\textit{MSC 2020 classification:} Primary 35J60; 35D40; 45K05; Secondary 49L25; 49Q22.
\end{abstract}

\maketitle

%%%%%%%%%%%%%%%%%%%%%%%%%%%%%%%%%%%%%

\section{Introduction}

In this work, we provide a new perspective on comparison principles for viscosity solutions to the Hamilton--Jacobi equation
\begin{equation} \label{eqn:HJ_intro}
    f - \lambda H f = h, \qquad \lambda > 0,\, h \in C_b(\bR^q),
\end{equation}
for Hamiltonians $H$ of the type
\begin{multline} \label{eqn:operator_linear_intro}
    H f(x) =\ip{b(x)}{\nabla f (x)} +  \frac{1}{2} \Tr\left(\Sigma\Sigma^T (x) D^2 f(x) \right) \\ 
    + \int \left[f(x+\bfz) - f(x) - \rchi_{B_1(0)} (\bfz) \ip{\bfz}{\nabla f(x)}\right] \mu_x(\dd \bfz) + \cH(\nabla f(x))
\end{multline}
and, more generally, for those in Bellman and Isaacs form
\begin{align*}
    H f(x) & = \sup_{\theta \in \Theta}\, \{H_\theta f(x) - \cI(x,\theta)\}\quad\text{and} \\
    H f(x) & = \sup_{\theta_1 \in \Theta_1} \inf_{\theta_2 \in \Theta_2} \{H_{\theta_1,\theta_2} f(x) - \cI(x,\theta_1,\theta_2)\},
\end{align*}
with $H_\theta$ and $H_{\theta_1,\theta_2}$ as in \eqref{eqn:operator_linear_intro} but with $\theta$ and $(\theta_1,\theta_2)$ dependent coefficients, respectively, and an appropriate cost functional $\cI$.

Motivated by convex Hamiltonians, for which no unique classical or weak solutions exist in general, \cite{CrLi83} introduced the notion of viscosity solutions. The seminal works \cite{Li83, CrEvLi84, Is84, Is86, CrIsLi87} explore this framework for first-order equations.

Most modern comparison proofs for operators containing second-order terms are based on results of \cite{jensen.1988b,jensen1988maximum}. Using then recent advances for generalized differentials, \cite{CrIs90} provided what is nowadays known as the Crandall--Ishii Lemma. An overview over uniqueness results for viscosity solutions to degenerate elliptic equations is given in the User's Guide \cite{CIL92}.  

The treatment of non-local operators was initially motivated by problems in optimal control theory; see \cite{MR0861089,MR1116853,alvarez.1996} for early examples with non-local operators.
The work \cite{BaIm08} gives a non-local version of the Crandall--Ishii Lemma by adapting the original procedure in \cite{CIL92}, and %, notably splitting the non-local term into a small-jump and large-jump regime reminiscent of the theory of L\'{e}vy measures. 
 \cite{Ho16} extends these results to unbounded solutions.
We also refer to \cite{FaGoSw17} for an overview of the Hilbertian setting, \cite{bdkn} for comparison principles for convex monotone semigroups on spaces of continuous functions, to \cite{MR4278437} for the classical well-posedness of convex Cauchy problems on $L^p$, to \cite{hu.2021} for a comparison principle in the framework of $G$-L\'evy processes, and to \cite{Ber2023} for a comparison principle for HJB equations on the set of probability measures.

Our approach and our main results, Theorem \ref{th:comparison_HJI} and Corollary \ref{cor:HJ_HJB}, innovate upon classical comparison principles in the following three ways: 
\begin{enumerate}[(1)]
    \item We reinterpret the classical doubling-of-variables method in the context of second-order equations by casting the Crandall--Ishii Lemma into a test function framework.  This adaptation allows us to effectively handle non-local integral operators, such as generators of Lévy processes, in the same framework as second-order operators, paving the way for stability results.
    \item We translate the key estimate on the difference of Hamiltonians in terms of an adaptation of the probabilistic notion of couplings, providing a unified approach that applies to both continuous and discrete operators. We point out that \cite{CrIsLi87} also discusses a coupling point of view, but only for first order operators.
    \item We strengthen the typical comparison principle using Lyapunov functionals from a sup-norm contractivity result to what we call the \textit{strict comparison principle}, cf.\ Definition \ref{definition:comparison}, which encodes continuity in the strict or sometimes also called mixed topology, cf.\ \cite{Bu58,Se72}.
\end{enumerate}
The results are illustrated in various examples in Section \ref{section:examples}.
To introduce the first two innovations, we heuristically trace back the classical doubling-of-variables procedure used to obtain comparison principles for first and second-order equations. For the sake of exposition, we focus on the $\theta$-independent case.

Given a subsolution $u$ and supersolution $v$ to an equation of type \eqref{eqn:HJ_intro} and, for $\alpha > 1$, optimizers $(x_\alpha, y_\alpha)$ to
\begin{equation} \label{eqn:intro_optimizing_point_construction}
    u(x_\alpha) - v(y_\alpha) - \frac{\alpha}{2} d^2(x_\alpha,y_\alpha) = \sup_{x,y \in \bR^q} \left\{u(x) - v(y) - \frac{\alpha}{2}d^2(x,y) \right\},
\end{equation}
one estimates
\begin{equation}
    \sup_{x \in \bR^q} u(x) - v(x) \leq h(x_\alpha) - h(y_\alpha) + \lambda \left[H\left(\frac{\alpha}{2}d^2(\cdot,y_\alpha)\right)(x_\alpha) -  H \left(-\frac{\alpha}{2}d^2(x_\alpha,\cdot) \right)(y_\alpha)\right].
\end{equation}
Consequently, comparison then holds, if
\begin{equation}\label{eqn:intro_final_abstractestimate}
    \liminf_{\alpha \rightarrow \infty} H\left(\frac{\alpha}{2}d^2(\cdot,y_\alpha)\right)(x_\alpha) -  H \left(-\frac{\alpha}{2}d^2(x_\alpha,\cdot) \right)(y_\alpha) \leq 0.
\end{equation}
The estimate \eqref{eqn:intro_final_abstractestimate}, then translates into explicit conditions on $H$.

\smallskip

When $H$ is, for example, of the form
\begin{equation} \label{eqn:intro_firstOrderOperator}
    Hf(x) = \ip{b(x)}{\nabla f(x)} + \frac{1}{2} |\nabla f(x)|^2,
\end{equation}
the estimate \eqref{eqn:intro_final_abstractestimate} translates into
\begin{align*}
    & H\left(\frac{\alpha}{2}d^2(\cdot,y_\alpha)\right)(x_\alpha) -  H \left(-\frac{\alpha}{2}d^2(x_\alpha,\cdot) \right)(y_\alpha) \\
    & = \left[\ip{b(x_\alpha)}{\alpha(x_\alpha-y_\alpha)} + \frac{\alpha^2}{2} d^2(x_\alpha,y_\alpha) \right] -  \left[\ip{b(y_\alpha)}{\alpha(x_\alpha-y_\alpha)} + \frac{\alpha^2}{2} d^2(x_\alpha,y_\alpha) \right] \\
    & \leq \ip{b(x_\alpha) - b(y_\alpha)}{\alpha(x_\alpha-y_\alpha)},
\end{align*}
which goes to $0$ for $\alpha \rightarrow \infty$, if $b$ is one-sided Lipschitz.

\smallskip

For second order operators, however, the same strategy fails since, considering, for example, the Laplacian $Hf(x) = \frac{1}{2}\Delta f(x) = \frac{1}{2} \Tr\left(D^2f(x)\right)$, we get
\begin{equation} \label{eqn:intro_estimate_for_Laplacian}
    H\left(\frac{\alpha}{2}d^2(\cdot,y_\alpha)\right)(x_\alpha) -  H \left(-\frac{\alpha}{2}d^2(x_\alpha,\cdot) \right)(y_\alpha) = 2 \alpha,
\end{equation}
which diverges as $\alpha \rightarrow \infty$.

The works \cite{jensen.1988b,jensen1988maximum} use the key insight that, while the first order-viscosity solution method explores the sequences of optimizers of \eqref{eqn:intro_optimizing_point_construction} separately (fix $y_\alpha$ and vary $x$ for the subsolution part and vice versa), for second order equations, one needs to treat the two sequences jointly. This insight was later formalized in \cite{CrIs90} and as Theorem 3.2 in the User's Guide \cite{CIL92}, now known as the Crandall--Ishii Lemma. The lemma states for equations of type $Hf(x) = \frac{1}{2} \Tr\left(D^2f(x)\right)$ that, given $X_\alpha = D^2 u(x_\alpha)$ and $Y_\alpha = D^2 v(y_\alpha)$ or their appropriate generalizations, we have the estimate
\begin{equation}
    \begin{pmatrix}
        X_\alpha & 0 \\
        0 & - Y_\alpha
    \end{pmatrix}
    \leq 
    3\alpha 
    \begin{pmatrix}
        \bONE & - \bONE \\
        - \bONE & \bONE
    \end{pmatrix}.
\end{equation}
Conjugating the matrices with
\begin{equation} \label{eqn:intro_def_transformationS}
    C \coloneqq \frac{1}{\sqrt{2}} \begin{pmatrix} \bONE & \bONE \\ \bONE & \bONE \end{pmatrix},
\end{equation}
i.e.\ essentially using $C$ to couple the subsolution and supersolution problems, we arrive at the desired estimate
\begin{equation} \label{intro:estimate_Laplacian_using_CIL}
    \frac{1}{2} \Tr(X_\alpha) - \frac{1}{2} \Tr(Y_\alpha) = \frac{1}{4} \Tr \begin{pmatrix}
        X_\alpha - Y_\alpha & X_\alpha - Y_\alpha \\
        X_\alpha - Y_\alpha & X_\alpha - Y_\alpha
    \end{pmatrix}
    \leq 0.
\end{equation}
We now briefly describe the three innovations (1)--(3).

\smallskip
{\bfseries Innovation 1: A test function framework.}
Examining the proof of the Crandall--Ishii Lemma, we can interpret the procedure as the construction of two test functions $\phi_\alpha, \psi_\alpha \in C^2(\bR^q)$ that are squeezed between $u$ and $v$ on one-hand and $\frac{\alpha}{2}d^2$ on the other. To be more precise, we find $\phi_\alpha, \psi_\alpha \in C^2(\bR^q)$ such that
\begin{equation} \label{eqn:intro_testFunctionPair_ours}
    u(x_\alpha) - \phi_\alpha(x_\alpha) = \sup_{x \in \bR^q} \{ u(x) - \phi_\alpha(x) \} \quad\text{and}\quad
    v(y_\alpha) - \psi_\alpha(y_\alpha) = \inf_{y \in \bR^q} \{ v(y) - \psi_\alpha(y) \},
\end{equation}
and
\begin{equation}\label{eqn:intro_testFunctionCombined_ours}
    \phi_\alpha(x_\alpha) - \psi_\alpha(y_\alpha) - \frac{\alpha}{2}d^2(x_\alpha,y_\alpha) = \sup_{x,y \in \bR^q} \left\{u(x) - v(y) - \frac{\alpha}{2}d^2(x,y) \right\}.
\end{equation}
As before, comparison now follows from the estimate 
\begin{equation}
    \liminf_{\alpha \rightarrow \infty} H \phi_\alpha(x_\alpha) -  H \psi_\alpha(y_\alpha) \leq 0.
\end{equation}
For the Laplacian $Hf(x) = \frac{1}{2}\Tr \left(D^2 f(x)\right)$, this translates to
\begin{equation} \label{eqn:intro:estimate_Laplacian_ours_1}
    H\phi_\alpha(x_\alpha) - H \psi_\alpha(y_\alpha) = \frac{1}{2} \Tr(D^2\phi_\alpha(x_\alpha)) - \frac{1}{2} \Tr(D^2 \psi_\alpha(y_\alpha)).
\end{equation}
At this point in proofs using the Crandall--Ishii Lemma, the estimate \eqref{intro:estimate_Laplacian_using_CIL} is performed by conjugation with the matrix $C$ in \eqref{eqn:intro_def_transformationS}. We formalize this step by adapting the probabilistic notion of couplings, cf.\ \cite{Li92,Th00,BuKe00}, and identify the choice of the matrix $C$ in \eqref{eqn:intro_def_transformationS} with the \emph{synchronous coupling} (also called \emph{co-monotone coupling}).

\smallskip

{\bfseries Innovation 2: The coupling approach.}
Indeed, given two Brownian motions starting in $x$ and $y$, one can construct a coupling of the two by considering
\begin{equation} \label{eqn:intro_def_syncrhonousCoupling}
    (X(t),\, Y(t)) = (x+B(t),\, y+B(t)),
\end{equation}
where $B(t)$ is a standard Brownian motion.
The generator of the coupled process \eqref{eqn:intro_def_syncrhonousCoupling} is given by
\begin{equation*}
    \widehat{H}g(x,y) := \frac{1}{2} \left(\partial_x + \partial_y\right)^2 g(x,y) = \frac{1}{2}\Tr \left(\begin{pmatrix}
        \bONE & \bONE \\ \bONE & \bONE
    \end{pmatrix} D^2 g(x,y) \right) = \frac{1}{2} \Tr \left(C D^2g(x,y) C^T\right),
\end{equation*} 
where we recover the matrix $C$ of \eqref{eqn:intro_def_transformationS}. Note that $\widehat{H}$ is indeed a coupling: For $f_1, f_2 \in C_b(\bR^q)$ and $(f_1 \oplus f_2)(x,y) := f_1(x) + f_2(y)$, we have
\begin{equation}
    \widehat{H}(f_1 \oplus f_2)(x,y) = Hf_1(x) + H f_2(y).
\end{equation}
Using the coupling $\widehat{H}$, we can now rewrite \eqref{eqn:intro:estimate_Laplacian_ours_1} as
\begin{equation} \label{eqn:intro_abstract_coupling_estimate}
    \begin{aligned}
    H\phi_\alpha(x_\alpha) -  H \psi_\alpha(y_\alpha) & = \widehat{H}\left(\phi_\alpha \oplus - \psi_\alpha\right)(x_\alpha,y_\alpha) \\
    & \leq \widehat{H} \left(\frac{\alpha}{2} d^2\right)(x_\alpha,y_\alpha) = 0, \\
    %& = 0,
\end{aligned}
\end{equation}
where the first equality follows by the definition of a coupling, the inequality is based on the positive maximum principle with the optimizers from equation \eqref{eqn:intro_testFunctionCombined_ours}, and the final equality is due to the fact that the synchronous coupling controls distance growth.

\smallskip

A similar strategy can be used to treat a discretized version of the Brownian Motion by considering the generator $Hf(x) = \frac{1}{2}\left[f(x+1) - f(x)\right] + \frac{1}{2}\left[f(x-1) - f(x)\right]$ of a random walk: We synchronously couple the random walk with itself using the operator
\begin{equation*}
    \widehat{H} f(x,y) = \frac{1}{2}\left[f(x+1,y+1) - f(x,y)\right] + \frac{1}{2}\left[f(x-1,y-1) - f(x,y)\right].
\end{equation*}
The argument in \eqref{eqn:intro_abstract_coupling_estimate} then works for the random walk exactly as it did for the Brownian motion.

This coupling approach is one of the main contributions of this paper, allowing for a unifying framework to show comparison for Hamilton--Jacobi equations with Hamiltonians of type \eqref{eqn:operator_linear_intro} and their Bellman and Isaacs versions, cf. Theorem \ref{th:comparison_HJI} and Corollary \ref{cor:HJ_HJB}.

\smallskip

{\bfseries Innovation 3: The strict comparison principle.}
Our third innovation is on the final estimate that is obtained as the comparison principle. For a subsolution $u$ to 
\begin{equation*}
    f - \lambda H f = h_1
\end{equation*}
and a supersolution $v$ to 
\begin{equation*}
    f - \lambda Hf = h_2
\end{equation*}
the comparison principle amounts to establishing that
\begin{equation*}
    \sup_{x \in \bR^q} u(x) - v(x) \leq \sup_{x \in \bR^q} h_1(x) - h_2(x).
\end{equation*}

The comparison principle, once established, thus implies sup-norm contractivity for the solution map $R(\lambda) : C_b(\bR^q) \rightarrow C_b(\bR^q)$, where $R(\lambda)h$ is the unique viscosity solution for the Hamilton--Jacobi equation \eqref{eqn:HJ_intro}.

\smallskip

It is well-known from examples, cf.\ \cite{BaCD97, YoZh99,CaSi04,FlSo06}, that the map $R(\lambda)h$ takes the form of an exponentially discounted Markovian control problem. If the dynamics admits a Lyapunov function $V$, having compact sublevel sets and satisfying $HV \leq c$, then the controlled Markov processes satisfy tightness properties. More precisely, if the controlled process starts in a compact set $K$, one can find, for any time horizon $T > 0$ and $\varepsilon > 0$, a compact set $\widehat{K} \supseteq K$, given in terms of the sublevel sets of $V$ such that, with probability $1-\varepsilon$, the process remains in $\widehat{K}$ up to time $T$. Rewriting this in terms of an estimate on the solution map $R(\lambda)$, we then find
\begin{equation} \label{eqn:intro_strict_contractivity}
    \sup_{x \in K} R(\lambda)h_1(x) - R(\lambda)h_2(x) \leq \varepsilon \vn{h_1 - h_2} + \sup_{x \in \widehat{K}} h_1(x) - h_2(x).
\end{equation}
Estimates of this type are indeed characterized by the strict topology, as was first established for linear functionals in \cite[Theorem 5.1]{Se72} and for convex, monotone functionals in \cite[Corollary 2.10]{Ne24}. Note that in this paper, we do not establish convexity of $h \mapsto R(\lambda)h$, but want to point out that given a convex $H$, convexity of $R(\lambda)h$ is to be expected by performing a comparison principle in terms of three variables using variants of the, e.g., three dimensional Theorem 3.2 of \cite{CIL92}, see also the domination principle of Theorem 2.22 and Corollary 2.26 of \cite{Ho16}. We leave this for future work.

Building upon the notion of Lyapunov functions, we will show that we can directly establish a variant of \eqref{eqn:intro_strict_contractivity} for a subsolution $u$ and a supersolution $v$. Given its motivation, we will call this estimate the \textit{strict comparison principle}, see Definition \ref{definition:comparison}  and the main result, Theorem \ref{th:comparison_HJI}, below.

\smallskip
\subsection*{Organization of the paper}
The rest of the paper is organized as follows: Section \ref{sec:framework} introduces the notation and definitions. Section \ref{section:main_results} introduces the framework by stating the necessary assumptions and formalizing the main results. In Section \ref{section:examples}, we show how to apply our framework to operators of the form \eqref{eqn:operator_linear_intro}. Section \ref{section:testfunctions_construction} contains the construction of the required optimizing points and test functions. Finally, Section \ref{section:main_proof} contains the proof of the main theorems.

\section{Preliminaries and general setting}\label{sec:framework}
\subsection{Notation and Preliminaries}\label{sec:framework_notations}

Throughout the paper, let $q\in \bN$ and $E = \bR^q$. We write $C(E)$ for the set of all real-valued continuous functions on $E$, where $E$ is endowed with the topology induced by the Euclidean distance $d$ on $\bR^q$. 

Let $C(E)$ and $C_b(E)$ be the set of continuous and bounded continuous functions. For $k \in \bN$, let $C^k(E)$ denote the space of all real-valued functions on $E$ that are $k$-times continuously differentiable. Let $C^k_b(E)$ the set of all functions in $C^k(E)$ with bounded derivatives up to order $k$. We denote the space of all smooth functions that are constant outside of a compact set by $C_c^\infty(E)$. 
We write $C_u(E)$ and $C_l(E)$ for the set of continuous functions on $E$ that are uniformly bounded from above and below, respectively. 
Moreover, we write
\begin{align*}
	C_+(E) & := \{f \in C(E) \, | \, f \text{ has compact sub-level sets}\}, \\
	C_-(E) & := \{f \in C(E) \, | \, f \text{ has compact super-level sets}\}, \\
	C_c(E) & := \{f \in C(E) \, | \, f \text{ is constant outside of a compact set}\}.
\end{align*}
We furthermore define the following intersections: $C_c^2(E) = C_c(E) \cap C^2(E)$,
\begin{equation*}
	C_+^2(E) \coloneqq C_+(E) \cap C^2(E), \qquad C_-^2(E) \coloneqq C_-(E) \cap C^2(E).
\end{equation*}

For $a,b \in \bR$, we write $a \vee b \coloneqq \max \{a,b\}$ and $a \wedge b \coloneqq \min \{a,b\}$.
We denote the supremum norm by $\vn{\,\cdot\,}$, that is
\begin{equation}
    \vn{f} = \sup_{x\in E} |f(x)|,
\end{equation}
for $f \in C_b(E)$, while, for $u \in C(E)$, we use the notation
\begin{equation}
\ssup{u} \coloneqq \sup_{x \in E} u(x), \qquad \iinf{u} \coloneqq \inf_{x \in E} u(x)
\end{equation}
for a supremum or infimum over the entire space and 
\begin{equation*}
   \ssup{u}_C \coloneqq \sup_{x \in C} u(x), \qquad \iinf{u}_C \coloneqq \inf_{x \in C} u(x)
\end{equation*}
for a supremum or infimum over a subset $C \subseteq E$.

We say that a function $\omega \colon [0,\infty) \rightarrow [0,\infty)$ is a \emph{modulus of continuity}, if $\omega$ is upper semi-continuous with $\omega(0) = 0$.
We say that a function $f \in C(E)$ \emph{admits a modulus of continuity}, if, for every compact $K \subseteq E$, there exists a modulus of continuity $\omega_K \colon [0, \infty) \rightarrow [0, \infty)$ such that, for all $x, y \in K$, we have
\begin{equation}\label{eq:modulus_admission}
    | f(x) - f(y) | \leq \omega_K(d(x,y)).
\end{equation}

A function $\phi \colon E \rightarrow \bR$ is called \emph{semi-convex} with constant $\kappa \in \bR$ if for any $x_0 \in E$ the map
\begin{equation*}
    x \mapsto \phi(x) + \frac{\kappa}{2}d^2(x,x_0)
\end{equation*}
is convex. Moreover, $\phi$ is called \emph{semi-concave} with constant $\kappa \in \bR$ if $-\phi$ is semi-convex with constant $-\kappa$.

We say that a function $f \in C(E,\bR^q)$ is \emph{one-sided Lipschitz} if, for all $x,y \in E$ and some constant  $C \in \bR$, we have
\begin{equation}\label{def:OneSideLip}
    \ip{x-y}{f(x) - f(y)} \leq C d^2(x,y).
\end{equation}

For any $z \in E$, let $s_z : E \rightarrow \bR^q$ be the \emph{shift map}
\begin{equation}\label{definition:shift_map}
    s_z(x) = x-z.
\end{equation}

For any $z_1, z_2 \in E$, let
\begin{equation*}
        d_{z_1, z_2} (x,y) \coloneqq d\left(s_{z_1} (x), s_{z_2} (y)\right).
\end{equation*}

Let $f_1, f_2 \in C(E)$. Then, we define the \emph{direct sum} $f_1 \oplus f_2, f_1 \ominus f_2 \in C(E\times E)$ as
\begin{equation*}
    (f_1 \oplus f_2)(x_1, x_2) \coloneqq f_1(x_1) + f_2(x_2) \quad\text{and}\quad (f_1 \ominus f_2)(x_1, x_2) \coloneqq f_1(x_1) - f_2(x_2)
\end{equation*}
for all $x_1,x_2\in E$. For two sets of functions $F_1, F_2 \subseteq C(E)$, we define
\begin{equation*}
    F_1 \oplus F_2 \coloneqq \left\{f_1 \oplus f_2 \,\middle|\, f_1 \in F_1, f_2 \in F_2\right\} \quad\text{and}\quad
    F_1 \ominus F_2 \coloneqq \left\{f_1 \ominus f_2 \,\middle|\, f_1 \in F_1, f_2 \in F_2\right\}.
\end{equation*}

\subsection{Operator notions}

We consider operators $H\subseteq C(E) \times C(E)$, where we identify $H$ by its graph. As usual, the \emph{domain} of $H$ is given by
\begin{equation*}
    \cD(H) \coloneqq \left\{f\in C(E)\,\middle|\, \exists \,g\in C(E)\colon (f,g)\in H\right\}.
\end{equation*}
Let $H_1, H_2 \subseteq C(E) \times C(E)$. We define
\begin{equation*}
    H_1 + H_2 \coloneqq \left\{(f,g_1+g_2)\, \middle|\, (f,g_1) \in H_1, (f,g_2) \in H_2 \right\},
\end{equation*}
which is an operator with domain
\begin{equation*}
    \cD(H_1 + H_2) \coloneqq \cD(H_1) \cap \cD(H_2).
\end{equation*}
We say that $H$ is \emph{linear on its domain} if, for any $f,g \in \cD(H)$ and $a \in \bR$ such that $af + g \in \cD(H)$, we have
\begin{equation}\label{definition:extended_linear}
    H \left(af+g\right) = aHf + Hg.
\end{equation}
We will prove the comparison principle for the equation in terms of $H$ by relating it to two equations in terms of two restrictions of $H$. To do so, we will need to be able to construct test functions in the domain of $H$ from functions in the domain of the restrictions. In particular, we will need the following notion.
\begin{definition}[Sequential Denseness]\label{def:order_dense}
    Let $\cD \subseteq C_b(E)$, $\cD_+ \subseteq C_+(E)$, and $\cD_- \subseteq C_-(E)$.

    \begin{itemize}
        \item We say that $\cD$ is \emph{upward sequentially dense} in $\cD_+$ if, for any $f_\dagger \in \cD_+$ and constant $a \in \bR$, there exists a function $f_{\dagger,a} \in \cD$ such that
        \begin{equation*}
            \begin{cases}
                f_{\dagger,a}(x) = f_\dagger(x) & \text{if } f_\dagger(x) \leq a, \\
                a < f_{\dagger,a}(x) \leq f_\dagger(x) & \text{if } f_\dagger(x) > a.
            \end{cases}
        \end{equation*}
        
        \item We say that $\cD$ is \emph{downward sequentially dense} in $\cD_-$ if, for any $f_\ddagger \in \cD_-$ and constant $a\in \bR$, there exists a function $f_{\ddagger, a} \in \cD$ such that
        \begin{equation*}
            \begin{cases}
                f_{\ddagger,a}(x) = f_\dagger(x) & \text{if } f_\ddagger(x) \geq a, \\
                a > f_{\ddagger,a}(x) \geq f_\ddagger(x) & \text{if } f_\ddagger(x) < a.
            \end{cases}
        \end{equation*}
    \end{itemize} 
\end{definition}
\subsection{Viscosity solutions}

For $\lambda > 0$, consider $h_1 \in C_l(E)$ and $h_2 \in C_u(E)$ and two operators $H_1 \subseteq C_l(E) \times C(E)$ and $H_2 \subseteq C_u(E) \times C(E)$. We study the pair of equations
\begin{align}
    f - \lambda H_1 f & \leq h_1, \label{eqn:HJ_subsolution} \\
    f - \lambda H_2 f & \geq h_2. \label{eqn:HJ_supersolution}
\end{align}
 The notion of viscosity solution is built upon the maximum principle.
 \begin{definition}[Maximum principle]\label{def:max_principle}
    We say that an operator $H \subseteq C(E) \times C(E)$ satisfies the \emph{maximum principle} if, for all $f_1,f_2 \in \cD(H)$ and $x_0 \in E$ with
    \begin{equation*}
        f_1(x_0) - f_2(x_0) = \sup_{x \in E} \{f_1(x) - f_2(x)\},
    \end{equation*}
    we have
    \begin{equation*}
        H f_1(x_0) \leq H f_2(x_0)
    \end{equation*}
    and, analogously, for all $f_1,f_2 \in \cD(H)$ and $x_0 \in E$ with
    \begin{equation*}
        f_1(x_0) - f_2(x_0) = \inf_{x \in E} \{f_1(x) - f_2(x)\},
    \end{equation*}
    we have
    \begin{equation*}
        H f_1(x_0) \geq H f_2(x_0).
    \end{equation*}
\end{definition}
Observe that every operator $H \subseteq C(E) \times C(E)$ that satisfies the maximum principle is single-valued, i.e., for all $f\in \cD(H)$,
\begin{equation*}
\#\{g\in C(E)\mid (f,g)\in H\}=1.
\end{equation*}
\begin{definition}[Viscosity sub- and supersolutions] \label{definition:viscosity_solutions} \label{def:viscosity_solution}

    Let $H_1 \subseteq C_l(E) \times C(E)$ and $H_2 \subseteq C_u(E) \times C(E)$ be two operators with domains $\cD(H_1)$ and $\cD(H_2)$, respectively. Moreover, let $\lambda > 0$, $h_1 \in C_l(E)$, and $h_2 \in C_u(E)$.
    \begin{enumerate}[(a)]
        \item A bounded, upper semicontinuous function $u \colon E\to \R$ is called a \emph{(viscosity) subsolution} to \eqref{eqn:HJ_subsolution} if, for all $(f,g) \in H_1$, there exists a sequence $(x_n)_{n\in \bN} \subseteq E$ such that
        \begin{gather*}
            \lim_{n \rightarrow \infty} u(x_n) - f(x_n)  = \sup_{x\in E} u(x) - f(x), \\
            \limsup_{n \rightarrow \infty} u(x_n) - \lambda g(x_n) - h_1(x_n) \leq 0.
        \end{gather*}
        
        \item A bounded, lower semicontinuous function $v \colon E\to \R$ is called a \emph{(viscosity) supersolution} to \eqref{eqn:HJ_supersolution} if, for all $(f,g) \in H_2$, there exists a sequence $(x_n)_{n\in \bN} \subseteq E$ such that
        \begin{gather*}
            \lim_{n \rightarrow \infty} v(x_n) - f(x_n) = \inf_{x\in E} v(x) - f(x), \\
            \liminf_{n \rightarrow \infty} v(x_n) - \lambda g(x_n) - h_2(x_n) \geq 0.
        \end{gather*}
    \end{enumerate}
    
    If $H_1 = H_2$ and $h_1 = h_2$, a function $u \in C_b(E)$ is called a \emph{(viscosity) solution} to the pair of equations \eqref{eqn:HJ_subsolution} and \eqref{eqn:HJ_supersolution} if it is both a subsolution to \eqref{eqn:HJ_subsolution} and a supersolution to \eqref{eqn:HJ_supersolution}.
\end{definition}
Working with test functions that have compact sub- or superlevel sets respectively, an approximating sequence can be replaced by an optimizing point in the definition of sub- or supersolution. See Lemma \ref{lemma:def_equiv} in Appendix \ref{section:def_equiv}.

Associated with the definition of viscosity solutions, we introduce the comparison principle, which, for $h_1 = h_2$, implies uniqueness in the viscosity sense for solutions of the Hamilton--Jacobi equation $f-\lambda H f = h$. We additionally introduce a new, stronger notion: the strict comparison principle. This name is inspired by the observation that the comparison principle implies contractivity in the sup-norm of the solution map. The strict comparison principle implies continuity in terms of the weaker strict topology, see e.g. \cite{Se72}.

\begin{definition} \label{definition:comparison}
    We say that the equations \eqref{eqn:HJ_subsolution} and \eqref{eqn:HJ_supersolution} satisfy 
    
    \begin{enumerate}[(a)]
        \item the \emph{comparison principle} if, for any subsolution $u$ to \eqref{eqn:HJ_subsolution} and any supersolution $v$ to \eqref{eqn:HJ_supersolution}, we have 
    \begin{equation*}
        \sup_{x\in E} u(x) - v(x) \leq \sup_{x \in E} h_1(x) - h_2(x).
    \end{equation*}
    \item  the \emph{strict comparison principle} if, for any subsolution $u$ to \eqref{eqn:HJ_subsolution}, any supersolution $v$ to \eqref{eqn:HJ_supersolution}, any compact set $K\subseteq E$ and $\varepsilon >0$, there exist a compact set $\widehat{K} = \widehat{K}(K,\varepsilon,\vn{u},\vn{v})$ and a constant $C = C(u,v,K,h_1,h_2,\lambda)$ such that we have 
    \begin{equation*}
        \sup_{x\in K} u(x) - v(x)  \leq  \varepsilon C + \sup_{x \in \widehat{K}} h_1(x) - h_2(x).
    \end{equation*}
    \end{enumerate}
\end{definition}

Observe that the strict comparison principle implies the comparison principle. Indeed, by the strict comparison principle, for all $x_0\in E$ and $\varepsilon>0$, there exists a constant $C$, independent of $\varepsilon$, and a compact set $\widehat K\subseteq E$ such that
\[
u(x_0) - v(x_0)  \leq  \varepsilon C + \sup_{x \in \widehat{K}} h_1(x) - h_2(x)\leq \varepsilon C + \sup_{x \in E} h_1(x) - h_2(x).
\]
Letting $\varepsilon \downarrow 0$, we find that $u(x_0) - v(x_0)\leq \sup_{x \in E} h_1(x) - h_2(x)$. Taking the supremum over all $x_0\in E$,  the comparison princple follows. 
\subsection{Notions for our framework}\label{sec:notion_our_framework} One of the main innovations of this work is the use of a new approach to prove the comparison principle based on the notion of couplings of operators. In the following we give the main definitions underlying our new framework.
\begin{definition}[Coupling]\label{def:coupling:only_coupling}
    Let $H \subseteq C(E) \times C(E)$ and $\widehat{H} \subseteq C(E^2) \times C(E^2)$ be linear on their respective domains. We say $\widehat{H}$ is a \emph{coupling of $H$} if  $\cD(H) \oplus \cD(H) \subseteq \cD(\widehat{H})$ and, for any $f_1, f_2 \in \cD(H)$, we have
    \begin{equation*}
        \widehat{H} \left(f_1 \oplus f_2\right) = H f_1 + H f_2.
    \end{equation*}
\end{definition}

\begin{definition}[Controlled growth]\label{def:coupling:growth}
    Let $\widehat{H} \subseteq C(E^2) \times C(E^2)$. We say that $\widehat{H}$ has \emph{controlled growth} if, for any $\alpha > 1$ and $z,z'\in E$, we have $\frac{\alpha}{2}d^2_{z,z'} \in \cD(\widehat{H})$. In addition, for any compact set $K \subseteq E$, there exists a modulus of continuity $\omega_K: [0, \infty) \rightarrow [0, \infty)$ and $x,x',y,y' \in K$ such that 
    \begin{multline}
        \widehat{H}\left(\frac{\alpha}{2}d^2_{x-y,\; x'-y'}\right)(x,x') \leq \omega_{\widehat{H}, K}\left(\alpha \left(d(x,y) + d(y,y') + d(y', x')\right)^2 + \right.\\
        \left(d(x,y) + d(y,y') + d(y', x')\right)\Big).
    \end{multline}
\end{definition}

\begin{definition}[Controlled growth coupling]\label{def:coupling}
    Let $H \subseteq C(E) \times C(E)$ and $\widehat{H} \subseteq C(E^2) \times C(E^2)$ be linear on their respective domains. We say $\widehat{H}$ is a \emph{controlled growth coupling of $H$} if the following properties are satisfied:
    \begin{enumerate}[(a)]
        \item $\widehat{H}$ satisfies the \emph{maximum principle}, cf. Definition \ref{def:max_principle}.
        \item $\widehat{H}$ is a \emph{coupling} of $H$, cf. Definition \ref{def:coupling:only_coupling}.
        \item $\widehat{H}$ has \emph{controlled growth}, cf. Definition \ref{def:coupling:growth}.
    \end{enumerate}
\end{definition}

We will split our Hamiltonian into a stochastic part that we can couple in the sense of the above definitions, and a deterministic part that we require to be a \textit{convex semi-monotone} operator. Here we give the precise definitions.

\begin{definition}[Local first-order operator]\label{def:first_order:only_first_order}
    We say that $H \subseteq C(E) \times C(E)$ is a \emph{local first-order} operator if there exists a continuous map $\cB : E \times \bR^q \rightarrow \bR$ such that, for any $f \in \cD$, we have $Hf(x) = \cB(x,\nabla f(x))$.
\end{definition}

\begin{definition}[Local semi-monotonicity]\label{def:oneSidedLipschitz}
    Let $H \subseteq C(E) \times C(E)$ be local first-order for some $\cB$, cf. Definition \ref{def:first_order:only_first_order}.
    We say that $H$ is \emph{locally semi-monotone} if, for any compact sets $K \subseteq E$, there exists a modulus of continuity $\omega_{\cB,K}: [0,\infty)\to [0,\infty)$ such that, for all $x,y \in K$ and $\alpha > 1$,
    \begin{equation*}
        \cB(x,\alpha(x-x')) - \cB(y,\alpha(x-x')) \leq \omega_{\cB,K}\left(\alpha d^2(x,x') + d(x,x') \right).
    \end{equation*}
\end{definition}

\begin{definition}[Convex semi-monotone operator] \label{definition:first_order}
    We say that $H \subset C(E) \times C(E)$ is a \emph{convex semi-monotone operator} if the following properties are satisfied:
    \begin{enumerate}[(a)]
        \item $H \subseteq C(E) \times C(E)$ is \emph{locally semi-monotone} for some $\cB$, cf. Definition \ref{def:oneSidedLipschitz}.
        \item For all $x\in E$, the map $p \mapsto \cB(x, p)$ is \emph{convex}.
    \end{enumerate}
\end{definition}
Finally, to work with Hamilton--Jacobi-Isaacs equations we need the following condition to be satisfied by our Hamiltonian.
\begin{definition}[Isaacs' condition]\label{def:isaacs_cond}
    Let $\Theta_1$ and $\Theta_2$ be two compact, metric spaces. We say that a collection $\{H_{\theta_1,\theta_2}\}_{\theta_1\in\Theta_1,\theta_2\in\Theta_2}\subseteq C(E)\times C(E)$ satisfies \emph{Isaacs' condition} if, for all $f\in\bigcap_{\theta_1 \in \Theta_1, \theta_2 \in \Theta_2} \cD(H_{\theta_1, \theta_2})$, 
    \begin{equation}
        \sup_{\theta_1\in \Theta_1} \inf_{\theta_2\in\Theta_2} \left\{H_{\theta_1,\theta_2} f(x)\right\}
        = \inf_{\theta_2\in\Theta_2} \sup_{\theta_1\in \Theta_1} \left\{H_{\theta_1,\theta_2} f(x) \right\}.
    \end{equation}
\end{definition}

Following typical comparison principle proofs, we will be perturbing the optimization problem
\begin{equation*}
    \sup_{x \in E} u(x) - v(x),
\end{equation*}
using a variant of the doubling of variables procedure to ensure that we can use the properties of sub- and supersolutions. Our perturbations consist of two components:
\begin{itemize}
    \item We need a Lyapunov-type function, which ensures that we can work on compact sets, see Definition \ref{definition:perturbation_containment}.
    \item We perform a variant of the Jensen perturbation to construct optimizers in which we can differentiate twice, see Definition \ref{definition:perturbation_first_second_order}.
\end{itemize}

\begin{definition} \label{definition:perturbation_containment}
    We call $V : E \rightarrow [0,\infty)$ a \emph{containment function} if
    \begin{enumerate}[(a)]
        \item $\inf_{y \in E} V(y) = 0$,  
        \item $V$ is semi-concave with semi-concavity constant $\kappa_V$,
        \item for every $c \geq 0$ the set $\{y \, | \, V(y) \leq c\}$ is compact. 
    \end{enumerate}
\end{definition}

Typically, the containment function is 
\begin{equation}\label{eq:containment}
V(x) = \log \left( 1 + \frac{1}{2} x^2\right).
\end{equation}

The next definition aims to produce optimizers for which we have twice differentiability via Jensen's Lemma, cf. Lemma A.3 of \cite{CIL92}. The variant used here creates a unique, global optimizer from a local one using $\xi$ and then shifts it slightly with $\zeta$.% to produce an appropriate optimizer in which we can differentiate twice. 
The two sets of perturbations are based on the prototypical examples of lines, i.e., 

\begin{equation}\label{eq:perturbation_1}
\zeta_{z,p}(x) = \ip{p}{x-z},
\end{equation}
and parabolas, i.e., 
\begin{equation}\label{eq:perturbation_2}
\xi_z(x) = \frac{1}{2}d^2(x,z),
\end{equation}
both centered at some $z \in E$. We give them as pair to capture the idea that quadratic growth dominates linear growth, cf. Definition \ref{definition:perturbation_first_second_order} \ref{item:definition:penalization:domination} below, which is used in our variant of Jensen's Lemma in the Appendix, cf. Proposition \ref{proposition:Jensen_Alexandrov_cutoff}.

\begin{definition} \label{definition:perturbation_first_second_order}
    We call collections of maps $\{\zeta_{z,p}\}_{z \in E, p \in \bR^q} \subset C(E)$ and $\{\xi_{z}\}_{z \in E} \subset C^1(E)$ $\zeta_{z,p} : E \rightarrow \bR$ and $\xi_z : E \rightarrow \bR$ sets of \emph{first} and \emph{second order point penalizations}, respectively, if there exist constants $R>0$ and $\kappa_\xi > 0$ such that for all $z \in E$:
    \begin{enumerate}[(a)]
        \item \label{item:definition:penalization:linear} $\zeta_{z,p}$ is linear in terms of $p$ around $z$:
            \begin{equation*}
                \zeta_{z,p}(y) = \ip{p}{y-z}
            \end{equation*}
            if $y \in B_R(z)$.
        \item \label{item:definition:penalization:semi-concave} The map $\xi_z$ is semi-concave with constant $\kappa_\xi$.
        \item \label{item:definition:penalization:second_order} The map $\xi_z$ is a penalization away from $z$:
            \begin{equation*}
                \xi_z(z) = 0, \qquad \xi_z(y) > 0, \qquad \text{if } y \neq z.
            \end{equation*}
        \item \label{item:definition:penalization:domination}  We have
            \begin{equation*}
                \inf_{|p| \leq 1} \inf_{y \notin B_R(z)} \xi_z(y) + \zeta_{z,p}(y) > 0.
            \end{equation*}
    \end{enumerate}
    For any given $z_0,z_1 \in E$ and $p \in \bR^q$, we consider the maps
\begin{align}\label{definition:Jensen_penalization_Xi}
    \Xi^0(y) = \Xi_{z_0,p}^0(y) & := \xi_{z_0}(y) + \zeta_{z_0,p}(y), \\
     \Xi(y) = \Xi_{z_0,p,z_1}(y) & := \xi_{z_0}(y) + \zeta_{z_0,p}(y) + \xi_{z_1}(y).
\end{align}
\end{definition}

\section{Assumptions and main result} \label{section:main_results}

In this section, we present our main result, Theorem \ref{th:comparison_HJI}, and outline the fundamental assumptions underlying our analysis.

Theorem \ref{th:comparison_HJI} states the strict comparison principle for operators in Hamilton--Jacobi-Isaacs (HJI) form, satisfying Isaacs' condition. Comparison principles for Hamilton--Jacobi (HJ) and Hamilton--Jacobi-Bellman (HJB) equations readily follow.

Heuristically, we assume that the base operator $\bH$ can be split into a stochastic part $\bA$, which we can couple, a semi-monotone deterministic term $\bB$, and a cost functional $\cI$, and that the action of the operator on the containment function $V$ is controlled.

\begin{theorem}[Strict comparison principle]\label{th:comparison_HJI}
    Let $\bH \subseteq C(E) \times C(E)$ be given by 
    \begin{equation*}
    \bH f(x) = \sup_{\theta_1\in \Theta_1} \inf_{\theta_2\in\Theta_2} \left\{\bA_{\theta_1,\theta_2} f(x) + \bB_{\theta_1,\theta_2} f(x) - \cI(x,\theta_1,\theta_2)\right\}
    \end{equation*}
    with $\Theta_1$ and $\Theta_2$ compact, metric spaces and $\cI : E \times \Theta_1 \times \Theta_2 \rightarrow (-\infty,\infty]$ a cost functional. Furthermore, consider a containment function $V$ and penalization functions $\{\zeta_{z,p}\}_{z \in E, p \in \bR^q}$, $\{\zeta_z\}_{z \in E}$. Let $\bH$ satisfy the technical Assumptions \ref{assumption:domain_setup} and \ref{assumption:compatibility} below and assume that
    \begin{enumerate}[(a)]
        \item \label{item:assumption:isaacs_cond} The collection of operators $\{\bA_{\theta_1,\theta_2} + \bB_{\theta_1,\theta_2} - \cI\}_{\theta_1 \in \Theta_1, \theta_2 \in \Theta_2}$ satisfies Isaacs' condition, cf. Definition \ref{def:isaacs_cond}.
        \item \label{item:assumption:Acoupling} For all $\theta_1 \in \Theta_1, \theta_2 \in \Theta_2$, $\bA_{\theta_1,\theta_2}$ is linear on its domain and admits a controlled growth coupling $\widehat{\bA}_{\theta_1,\theta_2}$ as in Definition \ref{def:coupling} with a modulus uniform in $\theta_1$ and $\theta_2$. 
        \item \label{item:assumption:Bmonotone} For all $\theta_1 \in \Theta_1, \theta_2 \in \Theta_2$, $\bB_{\theta_1,\theta_2}$ is a convex semi-monotone operator as in Definition \ref{definition:first_order} with a modulus uniform in $\theta_1$ and $\theta_2$.      
        \item \label{item:assumption_directHJI_lsc} The cost functional $\cI$ is lower semi-continuous in $(x,\theta_1,\theta_2)$, upper semi-continuous in $\theta_2$ for fixed $(x, \theta_1)$, and admits a modulus of continuity in $x$ uniformly in $(\theta_1, \theta_2)$.   
        \item \label{item:assumption_Lyapunovfunction} $V$ is a \emph{Lyapunov function} for $\bH$: $V \in \cD(\bH)$ and
        \begin{equation}\label{eq:mainth_HJI_bound}
            c_V := \sup_{x \in E} \sup_{\theta_1\in\Theta_1} \sup_{\theta_2\in\Theta_2} \left\{(\bA_{\theta_1,\theta_2} + \bB_{\theta_1,\theta_2}) V(x) -\cI(x,\theta_1,\theta_2) \right\} < \infty.
        \end{equation}
    \end{enumerate}

    Let $H \coloneqq \left\{(f,g) \in \bH \, \middle| \, f \in C_b(E) \right\}$ be the restriction of $\bH$ to $C_b(E)$ and consider
    \begin{equation} \label{eqn:mainTheoremHJ}
        f - \lambda H f = h
    \end{equation}
    for $\lambda > 0$ and $h \in C_b(E)$. Let $u$ and $v$ be a sub- and supersolution to \eqref{eqn:mainTheoremHJ} with $h_1$ and $h_2$ instead of $h$, respectively.
     Then for any compact set $K \subseteq E$ and $\varepsilon \in (0,1)$, we have
    \begin{equation}\label{eq:th:final}
        \sup_{x \in K} u(x) - v(x) \leq \varepsilon C_\eps + \sup_{x \in \widehat{K}_\eps} h_1(x) - h_2(x),
    \end{equation}
    where $\widehat{K}_\eps := \widehat{K}_\eps(K,u,v)$ and $C_\varepsilon := C_\varepsilon(K,u,v,h_1,h_2)$ are given by
    \begin{align*}
        \widehat{K}_\eps & := \left\{z \in E \, \middle| \, V(z) \leq \frac{\vn{u} + \vn{v}}{\varepsilon} + \ssup{V}_K \right\}, \\
        C_\eps & :=  \frac{2}{1-\eps^2} \left( \ssup{V}_K + \lambda c_{V}\right)  + \frac{1}{1-\eps} \vn{h_1} + \frac{1}{1-\eps} \vn{h_2} - \iinf{\frac{1}{1-\varepsilon}u - \frac{1}{1+\varepsilon}v }_K .
    \end{align*}
    In particular, the strict comparison principle holds for \eqref{eqn:mainTheoremHJ}.
\end{theorem}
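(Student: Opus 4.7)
The plan is to combine a weighted doubling-of-variables with a Lyapunov penalization and a Jensen-type construction of test functions, then exploit the coupling $\widehat{\bA}$ to dominate the resulting difference of Hamiltonians. The explicit form of the asserted constants $C_\eps$ and the sub-level set $\widehat{K}_\eps$ dictates how to calibrate the weights $(1\pm\eps)^{-1}$ and the Lyapunov penalty.

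For $\eps \in (0,1)$ and $\alpha > 1$, I would first consider
\[
\Phi_\alpha(x,y) := \frac{u(x)}{1-\eps} - \frac{v(y)}{1+\eps} - \frac{\alpha}{2} d^2(x,y) - \frac{\eps}{1-\eps}V(x) - \frac{\eps}{1+\eps}V(y)
\]
on $E\times E$. Upper/lower semi-continuity of $u,v$ and compactness of the sub-level sets of $V$ produce a maximizer $(x_\alpha,y_\alpha)$. Comparing $\Phi_\alpha(x_\alpha,y_\alpha)$ with $\Phi_\alpha(\hat x,\hat x)$ for a point $\hat x$ realizing $\iinf{u/(1-\eps) - v/(1+\eps)}_K$ on $K$ places $(x_\alpha,y_\alpha)$ inside $\widehat{K}_\eps \times \widehat{K}_\eps$, and the standard penalty argument yields $\alpha\, d^2(x_\alpha, y_\alpha) \to 0$ as $\alpha \to \infty$.

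Next I would produce test functions via a Jensen--Alexandrov-type perturbation (cf.\ Proposition \ref{proposition:Jensen_Alexandrov_cutoff}) built from the first- and second-order penalizations $\zeta,\xi$, and then truncate them using the upward/downward sequential denseness of Definition \ref{def:order_dense}. The outcome is a pair $\phi_\alpha,\psi_\alpha \in \cD(H)\subseteq C_b(E)$, twice differentiable at slightly shifted optimizers $\tilde x_\alpha,\tilde y_\alpha$, such that $u-\phi_\alpha$ attains its global supremum at $\tilde x_\alpha$, $v-\psi_\alpha$ its global infimum at $\tilde y_\alpha$, and the joint function $\phi_\alpha \ominus \psi_\alpha - \tfrac{\alpha}{2}d^2$ is globally maximized at $(\tilde x_\alpha,\tilde y_\alpha)$; this last property is the hook that will activate the maximum principle of $\widehat{\bA}$. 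The sub- and supersolution inequalities applied to $\phi_\alpha,\psi_\alpha$, combined with Isaacs' condition and the semi-continuity properties of $\cI$, reduce the proof to dominating
\[
\bigl(\bA_{\theta_1,\theta_2} + \bB_{\theta_1,\theta_2}\bigr)\phi_\alpha(\tilde x_\alpha) - \bigl(\bA_{\theta_1,\theta_2} + \bB_{\theta_1,\theta_2}\bigr)\psi_\alpha(\tilde y_\alpha)
\]
for each fixed pair $(\theta_1,\theta_2)$. Linearity of $\bA_{\theta_1,\theta_2}$ rewrites its contribution as $\widehat{\bA}_{\theta_1,\theta_2}(\phi_\alpha \ominus \psi_\alpha)(\tilde x_\alpha,\tilde y_\alpha)$; the maximum principle of $\widehat{\bA}_{\theta_1,\theta_2}$ at the joint optimum then bounds this by $\widehat{\bA}_{\theta_1,\theta_2}(\tfrac{\alpha}{2}d^2)(\tilde x_\alpha,\tilde y_\alpha)$, which the controlled-growth condition dominates by $\omega_{\widehat{\bA},\widehat{K}_\eps}\bigl(\alpha d^2(\tilde x_\alpha,\tilde y_\alpha) + d(\tilde x_\alpha,\tilde y_\alpha)\bigr)$, vanishing as $\alpha \to \infty$. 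The $\bB$-contribution is treated directly via the local semi-monotonicity of $\cB_{\theta_1,\theta_2}$, yielding a vanishing modulus of the same quantity. Combining with the Lyapunov bound \eqref{eq:mainth_HJI_bound} to absorb the $V$-penalties produces the constant $C_\eps$, and taking $\alpha \to \infty$ leaves \eqref{eq:th:final}.

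The main obstacle is the construction of $\phi_\alpha,\psi_\alpha$: they must simultaneously (i) lie in $\cD(H)$ after the sequential-denseness truncation, (ii) be sufficiently regular at the optimizers to feed the coupling $\widehat{\bA}$ and invoke its maximum principle, and (iii) carry \emph{global}, rather than merely local, optimizers. Reconciling the Jensen-type perturbation with the bounded-function truncation, without losing compatibility with the direct-sum structure $\cD(\bA) \oplus \cD(\bA)\subseteq \cD(\widehat{\bA})$ or the control afforded by the first- and second-order penalizations, is the crux where innovations (1) and (2) of the paper concentrate; once it is in place, the coupling inequality kills the second-order blow-up that would otherwise plague a non-local or second-order Hamiltonian.
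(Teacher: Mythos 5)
Your outline reproduces the paper's architecture at the level of the final estimates: the weighted doubling with the $V$-penalty, the reduction via Isaacs' condition to a fixed pair $(\theta_1,\theta_2)$, the identity $\bA\phi_\alpha(\tilde x_\alpha)-\bA\psi_\alpha(\tilde y_\alpha)=\widehat{\bA}(\phi_\alpha\ominus\psi_\alpha)(\tilde x_\alpha,\tilde y_\alpha)$ followed by the maximum principle and controlled growth of $\widehat{\bA}$, the semi-monotonicity estimate for $\bB$, the modulus for $\cI$, and the absorption of the penalties by the Lyapunov bound \eqref{eq:mainth_HJI_bound}. This matches Sections \ref{subsection:proof_comparison_toDiffH} and \ref{subsection:proof_comparison_mainProof}.

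However, there is a genuine gap exactly where you flag ``the main obstacle'': you assert the existence of the pair $\phi_\alpha,\psi_\alpha$ with global optimizers, twice differentiability at those optimizers, \emph{and} the joint optimality of $\phi_\alpha\ominus\psi_\alpha-\tfrac{\alpha}{2}d^2$, but you do not construct it, and the route you sketch cannot produce it. You propose to apply the Jensen--Alexandrov perturbation (Proposition \ref{proposition:Jensen_Alexandrov_cutoff}) to the doubled functional $\Phi_\alpha$ built directly from $u$ and $v$. That proposition requires the perturbed function to be \emph{semi-convex}, while $u$ and $-v$ are merely semicontinuous and bounded, so $\Phi_\alpha$ is not semi-convex and Jensen's argument (which hinges on Alexandrov's theorem and a Jacobian lower bound for the gradient map) does not apply. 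The paper resolves this by first replacing $u,v$ with the sup- and inf-convolutions $P^\alpha[u],P_\alpha[v]$ (Lemma \ref{lemma:properties_supinf_convolution} gives semi-convexity with constant $\alpha$), which turns the doubling into the quadruplication \eqref{eqn:quadruplicationvariables}; only then is Jensen applied (Proposition \ref{proposition:optimizing_point_construction}), and a globally smooth test function is squeezed between the once- and twice-penalized convolutions via Whitney extension and a calibrated cut-off (Lemma \ref{lemma:smooth_test_function_construction}, Proposition \ref{proposition:test_function_construction}), with the shift maps $s_{x_\alpha-y_\alpha}$ transporting optimality from the convolution variables $y_\alpha,y_\alpha'$ back to $x_\alpha,x_\alpha'$. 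Without this layer your claimed properties (i)--(iii) are unsupported. A secondary, more minor divergence: you truncate the test functions into $\cD(H)\subseteq C_b(E)$ via sequential denseness, whereas the paper keeps $f_\dagger\in\cD(H_+)$, $f_\ddagger\in\cD(H_-)$ and instead transfers the sub-/supersolution property to $H_\pm$ (Lemma \ref{lemma:operator_pushover}); your direction can be made to work, but only with the specific truncation level $a=f(x_0)+\ssup{u}-u(x_0)$ that prevents the cut-off from creating new optimizers, which you would need to justify.
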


The proof of the above theorem is carried out in Sections \ref{section:testfunctions_construction} and \ref{section:main_proof}. The next result follows by restricting the choice on $\Theta_1$ and $\Theta_2$.

\begin{corollary}\label{cor:HJ_HJB}\ 
    \begin{enumerate}[(a)]
        \item\label{item:cor:HJB} The strict comparison principle for HJB equations follows from Theorem \ref{th:comparison_HJI} by taking $\Theta_2$ to be a singleton.
        \item\label{item:cor:HJ} The strict comparison principle for HJ equations follows from Theorem \ref{th:comparison_HJI} by taking both $\Theta_1$ and $\Theta_2$ to be singletons.
    \end{enumerate}
\end{corollary}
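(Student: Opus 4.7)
The plan is to prove the corollary by direct specialization of Theorem \ref{th:comparison_HJI}. Both statements assert that the strict comparison principle for HJB (resp.\ HJ) equations follows by collapsing $\Theta_2$ (resp.\ both $\Theta_1$ and $\Theta_2$) to a singleton, so the task reduces to verifying that an HJB (or HJ) Hamiltonian can be recast as an HJI Hamiltonian with singleton index set(s), and that the hypotheses of Theorem \ref{th:comparison_HJI} then become exactly the natural hypotheses one would impose in the HJB or HJ setting.

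For part \ref{item:cor:HJB}, suppose we are given an HJB Hamiltonian of the form
\begin{equation*}
    \bH f(x) = \sup_{\theta \in \Theta} \left\{\bA_{\theta} f(x) + \bB_{\theta} f(x) - \cI(x,\theta)\right\},
\end{equation*}
with $\Theta$ a compact metric space, together with the natural HJB hypotheses: each $\bA_\theta$ is linear on its domain with a controlled growth coupling uniform in $\theta$, each $\bB_\theta$ is convex semi-monotone uniform in $\theta$, $\cI$ is lower semi-continuous with modulus in $x$ uniform in $\theta$, and $V$ is a Lyapunov function in the sense of \eqref{eq:mainth_HJI_bound}. Let $\Theta_2 = \{\star\}$ be any fixed one-point set (which is trivially compact and metric), set $\Theta_1 := \Theta$, and define
\begin{equation*}
    \bA_{\theta,\star} := \bA_\theta,\qquad \bB_{\theta,\star} := \bB_\theta,\qquad \cI(x,\theta,\star) := \cI(x,\theta).
\end{equation*}
With this identification, the HJI Hamiltonian of Theorem \ref{th:comparison_HJI} becomes
\begin{equation*}
    \sup_{\theta \in \Theta_1} \inf_{\theta_2 \in \Theta_2} \left\{\bA_{\theta,\theta_2}f + \bB_{\theta,\theta_2}f - \cI(\cdot,\theta,\theta_2)\right\} = \sup_{\theta \in \Theta}\left\{\bA_{\theta}f + \bB_{\theta}f - \cI(\cdot,\theta)\right\},
\end{equation*}
which is exactly the given HJB operator. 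Isaacs' condition (Definition \ref{def:isaacs_cond}) is automatic, since both the sup-inf and inf-sup over $\Theta_1 \times \{\star\}$ reduce to a single supremum. Assumptions \ref{item:assumption:Acoupling}--\ref{item:assumption_Lyapunovfunction} of Theorem \ref{th:comparison_HJI} are inherited verbatim from the HJB hypotheses. The conclusion \eqref{eq:th:final} is therefore the strict comparison principle for the HJB equation.

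For part \ref{item:cor:HJ}, repeat the construction with both $\Theta_1 = \{\star_1\}$ and $\Theta_2 = \{\star_2\}$ singletons; the Hamiltonian collapses to $\bH f = \bA f + \bB f - \cI(\cdot)$ for a single coupled operator $\bA$, a single convex semi-monotone operator $\bB$, and a lower semi-continuous function $\cI$. All uniformity conditions over $\theta_1,\theta_2$ become vacuous, Isaacs' condition is trivial, and Theorem \ref{th:comparison_HJI} yields the strict comparison principle for the HJ equation. I expect no genuine obstacle; the one point that must be checked — and I would check it briefly — is that the technical Assumptions \ref{assumption:domain_setup} and \ref{assumption:compatibility}, which are stated for the general HJI setting, are preserved under singleton specialization. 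Since these assumptions are parametrized over $(\theta_1,\theta_2) \in \Theta_1 \times \Theta_2$, taking one or both factors to be singletons trivializes the parametric content and leaves only the natural domain and compatibility requirements on $\bA$, $\bB$, and $\cI$, which are already part of the HJB/HJ hypotheses.
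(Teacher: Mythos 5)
Your proposal is correct and follows the same route the paper takes: the paper offers no separate proof beyond the remark that the result "follows by restricting the choice on $\Theta_1$ and $\Theta_2$," and your specialization argument (singleton index sets, Isaacs' condition becoming trivial, the uniformity and compatibility hypotheses collapsing to the natural HJB/HJ hypotheses) is precisely the verification that remark leaves implicit.
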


\begin{remark} \label{remark:martingale_problem}
    If $H$ is the generator of a Markov process, the comparison principle implies uniqueness of the martingale problem using Theorem 3.7 in \cite{CoKu15}. We also refer to \cite{SV79,EK86} for details on the martingale problem.
\end{remark}

\subsection{Regularity and compatibility assumptions}

In this section, we state the technical assumptions necessary for the proof the main theorem.

As we have a choice for the domain of our operator and only need functions with compact sub- and superlevel sets, we need to ensure that the domains of the restrictions are regular enough to perform our analysis. In particular, the action of the operator on test functions and their combinations with perturbations needs to be well-defined. Furthermore, we require that the domains are large enough to allow for approximations in the sense of Definition \ref{def:order_dense}.

\begin{assumption}[Regularity of $\bH$] \label{assumption:domain_setup}
    Let $\bH \subseteq C(E) \times C(E)$ be an operator with the following three restrictions
    \begin{align*}
        H & := \left\{(f,g) \in \bH \, \middle| \, f \in C_b(E) \right\}, \\
        H_+ & := \left\{(f,g) \in \bH \, \middle| \, f \in C_+(E) \right\}, \\
        H_- & := \left\{(f,g) \in \bH \, \middle| \, f \in C_-(E) \right\},
    \end{align*}
    satisfying
    \begin{enumerate}[(a)]
        \item \label{item:assumption_domain_setup:max_principle} $\bH$ satisfies the maximum principle,
        \item \label{item:assumption_domain_setup:domainH} $\cD(H)$ is linear and $C_c^\infty(E) \subseteq \cD(H) \subseteq C_b(E)$,
        \item \label{item:assumption_domain_setup:upward_dense} $\cD(H)$ is upward sequentially dense in $\cD(H_+)$, as in Definition \ref{def:order_dense},
        
        \item \label{item:assumption_domain_setup:downward_dense} $\cD(H)$ is downward sequentially dense in $\cD(H_-)$ as in Definition \ref{def:order_dense},
        \item \label{item:domain_setup_extended_convex} $\cD(H_+)$ is convex,
        \item \label{item:domain_setup_extended_affine} 
        for any $f \in \cD(H)$ and $g \in \cD(H_+)$ and $\delta \in (0,1)$ we have
        \begin{align*}
            & (1-\delta) f + \delta g \in \cD(H_+),
            & (1+\delta) f - \delta g \in \cD(H_-).
        \end{align*}
    \end{enumerate}
\end{assumption}

In our main theorem, Theorem \ref{th:comparison_HJI}, we assume that the Hamiltonian $\bH$ has an Isaacs-type structure
\begin{equation*}
    \bH f(x) = \sup_{\theta_1\in \Theta_1} \inf_{\theta_2\in\Theta_2} \left\{\bA_{\theta_1,\theta_2} f(x) + \bB_{\theta_1,\theta_2} f(x) - \cI(x,\theta_1,\theta_2)\right\}.
\end{equation*}
To ensure it is well-behaved, we need that the collections of operators $\{\bA_{\theta_1, \theta_2}\}_{\theta_1 \in \Theta_1, \theta_2 \in \Theta_2}$ and $\{\bB_{\theta_1, \theta_2}\}_{\theta_1 \in \Theta_1, \theta_2 \in \Theta_2}$ themselves are well-behaved as a functions of $(\theta_1,\theta_2)$. We additionally assume that these collections behave well on the families of penalization functions introduced in Section \ref{sec:notion_our_framework}.

\begin{assumption}[Compatibility of $\bA_{\theta_1, \theta_2}$ and $\bB_{\theta_1, \theta_2}$] \label{assumption:compatibility}
Let $\Theta_1$ and $\Theta_2$ be compact, metric spaces. For $\theta_1 \in \Theta_1$ and $\theta_2 \in \Theta_2$, let $\bA_{\theta_1, \theta_2}, \bB_{\theta_1, \theta_2}  \subseteq C(E) \times C(E)$.
Consider a containment function $V$ as in Definition \ref{definition:perturbation_containment} and penalization functions $\{\zeta_{z,p}\}_{z \in E, p \in \bR^q}$ and $\{\zeta_z\}_{z \in E}$ as in Definition \ref{definition:perturbation_first_second_order}.
  \begin{enumerate}[(a)]
        \item\label{item:compatA} Let the collection $\{\bA_{\theta_1, \theta_2}\}_{\theta_1 \in \Theta_1, \theta_2 \in \Theta_2}$ be \emph{compatible} with $V$, $\{\zeta_{z,p}\}_{z \in E, p \in \bR^q}$, and $\{\zeta_z\}_{z \in E}$, i.e.,
        \begin{enumerate}[(1)]
            \item \label{item:compatA:domain} we have
            \begin{equation*}
                V \circ s_z \in \cD(\bA_{\theta_1,\theta_2}), \qquad \Xi_{z_0,p,z_1} \circ s_z \in \cD(\bA_{\theta_1,\theta_2})
            \end{equation*}
            for any $\theta_1 \in \Theta_1$ and $\theta_2 \in \Theta_2$ and $z \in \overline{B_1(0)}$,
            \item \label{item:compatA:cont} the maps
            \begin{align*}
            (\theta_1,\theta_2,x,z_0,p,z_1,z) & \mapsto \bA_{\theta_1,\theta_2} \left( \Xi_{z_0,p,z_1} \circ s_z\right)(x),\\
            (\theta_1,\theta_2,x,z) & \mapsto \bA_{\theta_1,\theta_2}\left(V \circ s_z\right)(x)
            \end{align*}        
            are continuous,
            \item \label{item:compatA:contf} the map
            \begin{equation*}
                (\theta_1,\theta_2) \mapsto \bA_{\theta_1,\theta_2} f(x)
            \end{equation*}
            is continuous for any $x \in E$ and $f \in \bigcap_{\theta_1 \in \Theta_1, \theta_2 \in \Theta_2} \cD(\bA_{\theta_1,\theta_2})$.
        \end{enumerate}

        \item\label{item:compatB} Let the collection $\{\bB_{\theta_1, \theta_2}\}_{\theta_1 \in \Theta_1, \theta_2 \in \Theta_2}$ be \emph{compatible} with $V$, $\{\zeta_{z,p}\}_{z \in E, p \in \bR^q}$, and $\{\zeta_z\}_{z \in E}$, i.e.,
        \begin{enumerate}[(1)]
            \item \label{item:compatB:domain} 
            we have
            \begin{equation*}
                V \circ s_z \in \cD(\bB_{\theta_1,\theta_2}), \qquad \Xi_{z_0,p,z_1} \circ s_z  \in \cD(\bB_{\theta_1,\theta_2})
            \end{equation*}
            for any $\theta_1 \in \Theta_1$ and $\theta_2 \in \Theta_2$ and $z \in \overline{B_1(0)}$,
            \item \label{item:compatB:cont} the maps
            \begin{align*}
            (\theta_1,\theta_2,x,z_0,p,z_1) & \mapsto \bB_{\theta_1,\theta_2} \Xi_{z_0,p,z_1} (x),\\
            (\theta_1,\theta_2,x) & \mapsto \bB_{\theta_1,\theta_2} V(x)
            \end{align*}        
            are continuous,
            \item \label{item:compatB:contf} the map
            \begin{equation*}
                (\theta_1,\theta_2) \mapsto \bB_{\theta_1,\theta_2} f(x)
            \end{equation*}
            is continuous for any $x \in E$ and $f \in \bigcap_{\theta_1 \in \Theta_1, \theta_2 \in \Theta_2} \cD(\bB_{\theta_1,\theta_2})$.
        \end{enumerate}
    \end{enumerate}
\end{assumption}

\section{Application to partial integro-differential operators}\label{section:examples}

% \todo[inline]{R: I think we should think about whether we actually want to check compatibility. It is really straightforward, except perhaps for the integral operators. Not sure. 
% These functions are smooth, operator coefficients are continuous, so this is all completely trivial? Perhaps only integral term is a bit special here.}

In this section, we discuss the application of our framework to partial integro-differential operators of the type 
\begin{multline} \label{eqn:operator_linear_examples}
    \bH f(x) = \ip{b(x)}{\nabla f (x)} + \frac{1}{2} \Tr\left(\Sigma\Sigma^T (x) D^2 f(x) \right) \\ 
    + \int \left[f(x+\bfz) - f(x) - \rchi_{B_1(0)}(\bfz) \ip{\bfz}{\nabla f(x)}\right] \mu_x(\dd \bfz) + \cH(\nabla f(x)),
\end{multline}
which, for simplicity, we consider without Bellman or Isaacs structure.
More precisely, we split $\bH$ into $\bA + \bB$ with
\begin{equation}
    \bA = \frac{1}{2} \Tr\left(\Sigma\Sigma^T (x) D^2 f(x) \right) + \int \left[f(x+\bfz) - f(x) - \rchi_{B_1(0)}(\bfz)  \ip{\bfz}{\nabla f(x)}\right] \mu_x(\dd \bfz),
\end{equation}
and 
\begin{equation}
    \bB = \ip{b(x)}{\nabla f (x)} + \cH(\nabla f(x))
\end{equation}
and specify conditions under which 
\begin{itemize}
    \item we can find a Lyapunov function $V$ and construct a coupling that has controlled growth for $\bA$,
    \item we can find a Lyapunov function $V$ and establish local semi-monotonicity for $\bB$,
    \item we can verify that $\bA$ and $\bB$ are compatible.
\end{itemize}

For the verification of compatibility, we need to choose $V$ and families $\{\zeta_{z,p}\}_{z \in E, p \in \bR^q}$ and $\{\xi_z\}_{z \in E}$. We introduce two related families: The first family is suitable for local operators; the second is constructed from the first by suitable cut-off procedures, thus making them suitable for integral operators.

We point out that the second family is suitable for local operators as well. However, this comes at the cost of minor complexity in their construction.

\begin{definition}[Example containment function and penalizations] \label{definition:penalizations}
\ Consider the containment function
\begin{equation*}
    V(x) = \log \left(1+\frac{1}{2}x^2\right)
\end{equation*}
and the following two collections of penalization functions:
    \begin{description}
        \item[Collection 1]\label{definition:canonical_penalization} The base penalizations are
        %Consider the following collection:
          \begin{align*}
        \zeta_{z,p}(x) &= \ip{p}{x-z},\\
        \xi_{z}(x) &= \frac{1}{2} d^2(x,z).
    \end{align*}
    \item[Collection 2] \label{definition:levy_penalizations}
    Let $R'' > R' > R$ with $R$ as in Definition \ref{definition:perturbation_first_second_order}. 
    Let $\overline{\ell}: [0, \infty) \rightarrow [0, \infty)$ be a smooth function satisfying $\overline{l}(r) = 1$ for $r < R'$ and $\overline{l}(r) = 0$ for $x > R''$. 
    Let
    \begin{align*}
        \overline{\xi}_z (x) &= (1-\overline{\ell}(d(x, z)))(R'' + 1)^2 + \overline{\ell}(d(x, z)) \frac{1}{2} d^2(x,z),\\
        \overline{\zeta}_{p, z} (x) &= \overline{\ell} (d(x, z)) \ip{p}{x-z}.
    \end{align*}
    \end{description}
\end{definition}

 \begin{comment}
\begin{definition}[Example containment function and penalizations for integral operators]\label{definition:levy_penalizations}

    Let $R'' > R' > R$ with $R$ as in Definition \ref{definition:perturbation_first_second_order}. 
    Let $\overline{\ell}: [0, \infty) \rightarrow [0, \infty)$ be a smooth function \ri{satisfying $\overline{l}(r) = 1$ for $r < R'$ and $\overline{l}(r) = 0$ for $x > R''$. \sout{on that is $1$ for $x< R'$ and $0$ for $x>R''$.}}
    Let
    \begin{align*}
        V(x) &= \log \left(1+\frac{1}{2}x^2\right),\\
        \overline{\xi}_z (x) &= (1-\overline{\ell}(d(x, z)))(R'' + 1)^2 + \overline{\ell}(d(x, z)) \frac{1}{2} d^2(x,z),\\
        \overline{\zeta}_{p, z} (x) &= \overline{\ell} (d(x, z)) \ip{p}{x-z}.
    \end{align*}
\end{definition}
\end{comment}

As all considered functions are smooth, part \ref{item:compatA:domain} of the compatibility assumptions for $\bA$ and $\bB$, cf. Assumptions \ref{assumption:compatibility} \ref{item:compatA} and \ref{item:compatB}, hold for every part of $\bH$ except the integral term immediately. %For the intregral term, we introduce a new set of penalizations, for which $\zeta$ and $\xi$ are replaced by variants that are constant outside of a compact set.

\smallskip

To simplify the verification of our conditions, we have the following two observations.

% {\color{red} REMOVE BELOW?
% More precisely, we will show that the operator is compatible as in Definition \ref{definition:compatible} and that we have control on the action of operator on the containment function $V$, i.e. that the bound in equation \eqref{eq:mainth_HJI_bound} holds, for 
% \begin{equation}
%     \bA = \frac{1}{2} \Tr\left(\Sigma^T \Sigma (x) D^2 f(x) \right) + \int \left[f(x+\bfz) - f(x) - \bONE_{B_1(0)} \ip{\bfz}{\nabla f(x)}\right] \mu_x(\dd \bfz),
% \end{equation}
% and 
% \begin{equation}
%     \bB = s(x) + \ip{b(x)}{\nabla f (x)} + \cH(\nabla f(x))
% \end{equation}
% with the choices
% \begin{align*}
%     V(x) &= \log \left(1+\frac{1}{2}x^2\right),\\
%     \zeta_{z,p}(x) &= \ip{p}{x-z},\\
%     \xi_{z}(x) &= \frac{1}{2} d^2(x,z)
% \end{align*}
% as discussed in Section \ref{section:perturbations_definitions}. Note that, for all $z\in E$ and $p \in \bR^q$, $V, \zeta_{z, p}, \xi_{z} \in C^\infty(E)$. Consequently, part \ref{item:compatA:domain} of the compatibility assumption for $\bA$ and $\bB$, cf. Definitions  \ref{definition:compatible} \ref{item:compatA} and \ref{item:compatB}, hold for every part of $\bH$ except the integral term immediately.

% Note that, by the following remarks, we can indeed show the assumption for each term of $\bA$ and $\bB$ separately. }
\begin{remark} \label{remark:additive_linear}
    Let $\bA_1, \bA_2 \subseteq C(E) \times C(E)$ be linear on their respective domains and compatible with $V$, $\{\zeta_{z,p}\}_{z \in E, p \in \bR^q}$, and $\{\zeta_z\}_{z \in E}$ and with associated controlled growth couplings $\widehat{\bA}_1,\widehat{\bA}_2 \subseteq C(E^2) \times C(E^2)$.
    Then, the operator $\bA := \bA_1 + \bA_2$ is linear on its domain and compatible with $V$, $\{\zeta_{z,p}\}_{z \in E, p \in \bR^q}$, and $\{\zeta_z\}_{z \in E}$ and with associated controlled growth coupling $\widehat{\bA} := \widehat{\bA}_1 + \widehat{\bA}_2$.
\end{remark}

\begin{remark} \label{remark:additive_nonlinear}
    Let $\bB_1, \bB_2 \subseteq C(E) \times C(E)$ be compatible with $V$, $\{\zeta_{z,p}\}_{z \in E, p \in \bR^q}$, and $\{\zeta_z\}_{z \in E}$ and convex semi-monotone operators. Then $\bB := \bB_1 + \bB_2$ is compatible with $V$, $\{\zeta_{z,p}\}_{z \in E, p \in \bR^q}$, and $\{\zeta_z\}_{z \in E}$ and convex semi-monotone operator.
\end{remark}

The rest of this section is organized as follows:
\begin{itemize}
    \item In Section \ref{section:sub:drift}, we consider drift terms and convex first-order Hamiltonians;
    \item In Section \ref{section:sub:diffusion}, we consider diffusion operators;
    \item In Section \ref{section:sub:integral}, we consider integral operators.
\end{itemize}

\subsection{Deterministic Example: Drift terms and convex first-order Hamiltonians} \label{section:sub:drift}

In this section, we consider the deterministic part of the operator \eqref{eqn:operator_linear_examples}.

\begin{proposition} \label{proposition:ex_drift}
    Suppose that $\bB$ is given by 
    \begin{equation}
        \bB f(x) = \ip{b(x)}{\nabla f (x)} + \cH(\nabla f (x))
    \end{equation}
    with the drift term $x \mapsto b(x)$ locally, one-sided Lipschitz with constant $L_{b,K}$ and $\vn{b(x)} \leq \frac{c_b}{2}(1+\vn{x})$ for some constant $c_b > 0$, and $p \mapsto \cH(p)$ continuous and convex.\\
    Then, $\bB$ is compatible with both collections of Definition \ref{definition:penalizations}, cf. Assumption \ref{assumption:compatibility} \ref{item:compatB}, and convex semi-monotone. Furthermore, $V=\log(1+\frac{x^2}{2})$ is a Lyapunov function:
    \begin{equation*}
    \sup_{x \in E} \bB V(x) < \infty.
    \end{equation*}
\end{proposition}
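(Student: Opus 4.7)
The plan is to check the three conclusions — compatibility, convex semi-monotonicity, and the Lyapunov bound — separately, exploiting the fact that $\bB f(x) = \cB(x, \nabla f(x))$ is a local first-order operator with $\cB(x,p) = \langle b(x), p\rangle + \cH(p)$, so only the first-order regularity of the test functions matters. Since $\Theta_1, \Theta_2$ do not appear, the $\theta$-continuity requirements in Assumption \ref{assumption:compatibility}\ref{item:compatB:contf} are vacuous, and everything reduces to checking that the relevant maps are well-defined and continuous in $(x, z_0, p, z_1)$.

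For the convex semi-monotone structure, convexity of $p \mapsto \cB(x,p)$ is immediate from linearity of $p \mapsto \langle b(x), p\rangle$ and convexity of $\cH$. For local semi-monotonicity (Definition \ref{def:oneSidedLipschitz}), the key observation is that the $\cH$-term cancels in the difference: for $x,x' \in K$ and $p = \alpha(x-x')$,
\begin{equation*}
    \cB(x, \alpha(x-x')) - \cB(x', \alpha(x-x')) = \alpha \ip{b(x)-b(x')}{x-x'} \leq \alpha L_{b,K} d^2(x,x'),
\end{equation*}
using the one-sided Lipschitz property. Hence $\omega_{\cB,K}(r) := L_{b,K}\, r$ serves as a modulus of continuity, since it is continuous and vanishes at $0$. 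This is the crux of why convex first-order Hamiltonians are admissible at no additional cost.

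For compatibility with both collections in Definition \ref{definition:penalizations}, the gradients of $V$, $\xi_z$, $\zeta_{z,p}$ and their cut-off variants $\overline{\xi}_z$, $\overline{\zeta}_{p,z}$ are continuous in all their parameters — the cut-off $\overline{\ell}$ is smooth by design — so $V \circ s_z$ and $\Xi_{z_0,p,z_1} \circ s_z$ lie in $\cD(\bB)$ (once the latter is identified with a subspace of $C^1$), and the required continuity of $(x, z_0, p, z_1) \mapsto \bB\Xi_{z_0,p,z_1}(x)$ and $(x, z) \mapsto \bB(V \circ s_z)(x)$ is by composition of continuous maps (using continuity of $b$ and $\cH$). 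This is routine given the explicit form of the penalizations.

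The main substantive point is the Lyapunov bound. Here one computes $\nabla V(x) = \frac{x}{1+x^2/2}$, which takes values in a fixed compact subset of $\bR^q$; indeed $|\nabla V(x)|$ attains its maximum at $|x|=\sqrt{2}$ and is therefore uniformly bounded by a constant. Consequently $\cH(\nabla V(x))$ is bounded by the maximum of $\cH$ on this compact set (finite by continuity), while the drift contribution is controlled by
\begin{equation*}
    \ip{b(x)}{\nabla V(x)} \leq \tfrac{c_b}{2}(1 + |x|) \cdot \tfrac{|x|}{1 + x^2/2},
\end{equation*}
whose right-hand side tends to $c_b$ as $|x| \to \infty$ and is hence bounded uniformly in $x$. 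Combining the two estimates gives $\sup_x \bB V(x) < \infty$. The only subtlety is the matching between the linear growth of $b$ and the $1/|x|$ decay of $\nabla V$ for large $|x|$: this is precisely why the logarithmic Lyapunov function $V$ is the natural choice, and no more delicate argument is needed.
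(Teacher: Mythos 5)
Your proposal is correct and follows essentially the same route as the paper's proof: the $\cH$-terms cancel in the semi-monotonicity difference so that the one-sided Lipschitz bound on $b$ gives the modulus $\omega_{\cB,K}(r)=L_{b,K}\,r$; the Lyapunov bound rests on the boundedness of $\nabla V(x)=\frac{2x}{2+|x|^2}$ against the linear growth of $b$ and the continuity of $\cH$; and compatibility is verified by direct evaluation of $\bB$ on $\Xi_{z_0,p,z_1}\circ s_z$ and $V\circ s_z$ and continuity of the resulting expressions. The only cosmetic difference is that you argue the compatibility step by composition of continuous maps (and explicitly mention the cut-off collection), whereas the paper writes out the explicit formulas for Collection 1; both are adequate.
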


\begin{proof}
    {\bfseries Convex semi-monotonicity:}
    Clearly, $\bB$ is locally first-order with $\bB f(x) = \ip{b(x)}{\nabla f (x)} + \cH(\nabla f (x)) = \cB(x, \nabla f(x))$. Additionally, for any compact set $K \subseteq E$ $\alpha >0$, and $x,x' \in K$, we have
    \begin{align*}
        \cB(x,\alpha(x-x')) - \cB(y,\alpha(x-x')) &= \ip{b(x)}{\alpha(x-x')} + \cH(\alpha(x-x'))\\
        &\qquad - \ip{b(x')}{\alpha(x-x')} - \cH(\alpha(x-x'))\\
        &= \ip{b(x) - b(x')}{\alpha (x-x')} \\
        &\qquad+ \cH(\alpha(x-x')) - \cH(\alpha(x-x'))\\
        &\leq \alpha L_{b, K} d^2(x,x'), 
    \end{align*}
    establishing semi-monotonicity. As convexity of $p \mapsto \cB(x,p)$ is immediate, we conclude that $\bB$ is convex semi-monotone.

    {\bfseries Lyapunov control:}
    Using that $V(x) = \log\left(1+\frac{x^2}{2}\right)$, $\nabla V (x) = \frac{2x}{2+|x|^2}$ is bounded as a function of $x$, $b$ has linear growth, and that $\cH$ is continuous, we find that
    \begin{equation}
        \sup_{x\in E} \bB V (x) = \sup_{x\in E} \ip{b(x)}{\frac{2x}{2+|x|^2}} + \cH \left( \frac{2x}{2+|x|^2} \right) < \infty.
    \end{equation}

    {\bfseries Compatibility:}
    %As all considered functions and gradients are continuous, Assumption \ref{assumption:compatibility} \ref{item:compatB} is immediate.
    We show the compatibility of $\bB$, cf. Assumption \ref{assumption:compatibility} \ref{item:compatB}, by evaluation of the perturbation and containment function in the operator.\\
    Using $\xi_z(x) = \frac{1}{2} d^2(x,z)$ and $\zeta_{z, p}(x) = \ip{p}{x - z}$, we find for $z_0, z_1, z \in E$ and $p \in B_1(0)$
    \begin{multline*}
        \bB (\Xi_{z_0, p, z_1} \circ s_z) (x) = \ip{b(x)}{(x -z - z_0) + p + (x -z - z_1)}\\
        + \cH\left((x -z - z_0) + p + (x -z - z_1)\right),
    \end{multline*} 
    which is continuous in $(x, z_0, p, z_1, z)$ as $b$ and $\cH$ are continuous.
    For $V(x) = \log \left(1 + \frac{1}{2} x^2\right)$ and $z \in E$, we find
    \begin{equation}\label{eq:first_order_ex:lyapunov}
        \bB (V \circ s_z) (x) = \ip{b(x)}{\frac{2(x-z)}{2+|x-z|^2}} + \cH\left( \frac{2(x-z)}{2+|x-z|^2} \right),
    \end{equation}
    which is continuous in $(x,z)$ as $b$ and $\cH$ are continuous. Thus, $\bB$ is compatible.
\end{proof}

\subsection{Stochastic Example: Diffusion operators}\label{section:sub:diffusion}
In this section, we focus on diffusion operators of the form
\begin{equation} \label{eqn:diffusion_example}
    \bA f(x) = \frac{1}{2} \Tr \left( \Sigma(x)\Sigma^T(x) D^2 f(x) \right),
\end{equation}
where $\Sigma(x)$ is a positive semi-definite matrix for each fixed $x \in E$.

Our main goal is to construct a controlled growth coupling for the operator $\bA$. To illustrate the idea behind our approach, consider the simpler case of the Laplacian operator
\begin{equation}
    \bA_0 f (x) = \frac{1}{2} \Tr(D^2 f(x)),
\end{equation}

which is the infinitesimal generator of Brownian motion. The well-known synchronous coupling of two Brownian motions started from $x$ and $x'$, respectively, is given by
$(X(t),X'(t)) = (x+B(t),x'+B(t))$ with $B(t)$ a standard Brownian motion, having generator
\begin{equation*}
    \widehat{\bA}_0 g(x,x') = \frac{1}{2} \left(\partial_x + \partial_{x'}\right)^2 g(x,x'),
\end{equation*}
which satisfies $\widehat{\bA}_0 d^2 = 0$. Aiming to generalize this, we rewrite
\begin{equation*}
    \widehat{\bA}_0 g(x,x') = \Tr \left(C C^T D^2 g(x,x') \right) \quad \text{with} \quad C= \frac{1}{\sqrt{2}} 
    \begin{pmatrix}
        \bONE & \bONE \\ \bONE & \bONE
    \end{pmatrix}.
\end{equation*}
In general we obtain the following result.

\begin{comment}

In this setting, a natural coupling for $\bA_0$ is given by
\begin{equation*}
    \widehat{\bA}_0 g(x,y) := \frac{1}{2} \left(\partial_x + \partial_y\right)^2 g(x,y) = \frac{1}{2} \Tr \left( 
    \begin{pmatrix}
        \bONE & \bONE \\ \bONE & \bONE
    \end{pmatrix} 
    D^2 g(x,y) \right) = \Tr \left(C C^T D^2 g(x,y) \right)
\end{equation*}
with $C = \frac{1}{\sqrt{2}} 
    \begin{pmatrix}
        \bONE & \bONE \\ \bONE & \bONE
    \end{pmatrix}$.

This operator $\widehat{\bA}_0$ is, in fact, the generator of the process that arises from the synchronous coupling of two Brownian motions starting in $x$ and $y$, respectively.

To extend this idea and construct a coupling for the operator $\bA$, the key task is to determine the appropriate matrix $CC^T$. The following lemma addresses this construction.
\end{comment}

\begin{proposition}\label{proposition:coupling_diffusion}
    Suppose that $\bA$ is given by
    \begin{equation*}
        \bA f(x) = \frac{1}{2} \Tr \left( \Sigma(x)\Sigma^T(x) D^2 f(x) \right) 
    \end{equation*}
    with $\Sigma(x)$ positive semi-definite for all $x \in E$, $x \mapsto \Sigma(x)$ locally Lipschitz with constant $L_{\Sigma,K}$ and  $\vn{b(x)} \leq \frac{c_\Sigma}{2}(1+\vn{x})$ for some constant $c_\Sigma > 0$. Consider 
    \begin{equation*}
        \widehat{\bA}f(x,y) := \Tr \left(\widehat{\Sigma}^2(x,x') D^2 f(x,x') \right),
    \end{equation*}
    where
    \begin{equation*}
        \widehat{\Sigma}^2(x,y) := \begin{pmatrix}
            \Sigma(x)\Sigma^T(x) & \Sigma(x')\Sigma^T(x) \\
            \Sigma(x)\Sigma^T(x') & \Sigma(x')\Sigma^T(x') 
        \end{pmatrix}.
    \end{equation*}
    Then, $\bA$ is compatible, cf. Assumption \ref{assumption:compatibility} \ref{item:compatA}, linear on its domain, and admitting the controlled growth coupling $\widehat{\bA}$. Furthermore, $V=\log(1+\frac{x^2}{2})$ is a Lyapunov function:
    \begin{equation*}
        \sup_x \bA V(x) < \infty.
    \end{equation*}
\end{proposition}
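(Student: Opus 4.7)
The plan is to verify four things in turn: (1) linearity of $\bA$ on its domain, which is immediate from $f \mapsto \tfrac{1}{2}\Tr(\Sigma\Sigma^T D^2 f)$; (2) the Lyapunov bound $\sup_x \bA V(x) < \infty$; (3) that $\widehat{\bA}$ is a controlled growth coupling of $\bA$; and (4) compatibility in the sense of Assumption \ref{assumption:compatibility} \ref{item:compatA} for both penalization collections of Definition \ref{definition:penalizations}. Item (3) carries the real content; the others reduce to routine calculation with smooth, explicitly given functions.

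For the Lyapunov bound, I would compute
\[
D^2 V(x) = \frac{\bONE}{1 + \tfrac{1}{2}|x|^2} - \frac{xx^T}{\left(1 + \tfrac{1}{2}|x|^2\right)^2},
\]
whose operator norm is $O((1 + |x|^2)^{-1})$; combined with the linear-growth bound $\|\Sigma(x)\Sigma^T(x)\| = O(1 + |x|^2)$, the product $\bA V(x) = \tfrac{1}{2}\Tr(\Sigma\Sigma^T D^2 V)$ stays bounded uniformly in $x$. Compatibility is similarly routine: the shifted penalizations $\Xi_{z_0, p, z_1} \circ s_z$ and $V \circ s_z$ are $C^2$ in $x$ (for Collection 2, after multiplying through by the smooth cut-off $\overline{\ell}$) and depend jointly continuously on the auxiliary parameters, so that multiplication of their Hessians by the continuous map $x \mapsto \Sigma(x)\Sigma^T(x)$ followed by trace preserves joint continuity. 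Since $\bA$ has no $(\theta_1, \theta_2)$-dependence, condition \ref{item:compatA:contf} is automatic.

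For the coupling, the backbone is the factorization
\[
\widehat{\Sigma}^2(x, x') = S(x, x')\, S(x, x')^T, \qquad S(x, x') := \begin{pmatrix} \Sigma(x) \\ \Sigma(x') \end{pmatrix},
\]
a $2q \times q$ block matrix, which exhibits $\widehat{\Sigma}^2$ as symmetric and positive semi-definite. The maximum principle for $\widehat{\bA}$ then follows by the standard argument: at a maximum of $f_1 - f_2$ one has $D^2(f_1 - f_2) \leq 0$, and positive semi-definiteness of $\widehat{\Sigma}^2$ yields $\Tr(\widehat{\Sigma}^2 D^2(f_1 - f_2)) \leq 0$. The coupling identity $\widehat{\bA}(f_1 \oplus f_2) = \bA f_1 + \bA f_2$ is direct: the Hessian of $f_1 \oplus f_2$ is block-diagonal, so only the diagonal blocks of $\widehat{\Sigma}^2$ contribute and recover $\bA f_1(x) + \bA f_2(x')$.

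The controlled growth bound is where the synchronous nature of the coupling does real work, and I expect this to be the main obstacle. For $z, z' \in E$ and $\alpha > 0$, the Hessian of $(x, x') \mapsto \tfrac{\alpha}{2} d^2_{z, z'}(x, x') = \tfrac{\alpha}{2}|(x - x') - (z - z')|^2$ is the constant block matrix $\alpha \bigl(\begin{smallmatrix} \bONE & -\bONE \\ -\bONE & \bONE \end{smallmatrix}\bigr)$. A trace computation using $\Tr(\Sigma(x)\Sigma^T(x')) = \Tr(\Sigma(x')\Sigma^T(x))$ then gives
\[
\widehat{\bA}\!\left(\tfrac{\alpha}{2} d^2_{z, z'}\right)(x, x') = \tfrac{\alpha}{2} \Tr\!\left((\Sigma(x) - \Sigma(x'))(\Sigma(x) - \Sigma(x'))^T\right) = \tfrac{\alpha}{2} \|\Sigma(x) - \Sigma(x')\|_F^2,
\]
which is independent of $(z, z')$. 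The local Lipschitz bound $\|\Sigma(x) - \Sigma(x')\|_F \leq L_{\Sigma, K}\, d(x, x')$ combined with $d(x, x') \leq d(x, y) + d(y, y') + d(y', x')$ then places this in the form required by Definition \ref{def:coupling:growth} with modulus $\omega_{\widehat{\bA}, K}(r) = \tfrac{1}{2} L_{\Sigma, K}^2\, r$. The crux is exactly this cancellation: without the synchronous off-diagonal blocks of $\widehat{\Sigma}^2$, the naive decoupled estimate diverges in $\alpha$ as in \eqref{eqn:intro_estimate_for_Laplacian}, whereas the coupled trace absorbs the $O(\alpha)$ term and leaves behind a remainder controlled purely by the Lipschitz modulus of $\Sigma$.
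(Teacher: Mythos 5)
Your proposal is correct and follows essentially the same route as the paper: the coupling identity via the block-diagonal Hessian of $f_1 \oplus f_2$, the maximum principle via positive semi-definiteness of $\widehat{\Sigma}^2$, and the controlled growth bound via the trace identity $\Tr\bigl(\widehat{\Sigma}^2(x,x')\,\alpha\bigl(\begin{smallmatrix}\bONE & -\bONE\\ -\bONE & \bONE\end{smallmatrix}\bigr)\bigr) = \alpha\,\Tr\bigl((\Sigma(x)-\Sigma(x'))(\Sigma(x)-\Sigma(x'))^T\bigr)$ followed by the local Lipschitz estimate. If anything you are slightly more explicit than the paper, which invokes its Lemma on block-structured couplings without writing out the factorization $\widehat{\Sigma}^2 = SS^T$ that certifies positive semi-definiteness.
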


% Before we proceed with the proof, note that the matrix $\widehat{\Sigma}^2(x,y)$ is positive semi-definite. The following lemma highlights that the operator $\widehat{\bA}$ is indeed a coupling for $\bA$. The proof is straightforward.

% \begin{lemma}\label{lemma:coupling_diffusion_1}
%     Let 
%     \begin{equation*}
%         Af(x) = \frac{1}{2} \Tr\left(B(x)D^2 f(x)\right).
%     \end{equation*}
%     For any $x,y \in E$, let $\widehat{B}(x,y)$ be a positive semi-definite matrix  such that it has a block-structure satisfying
%     \begin{equation*}
%         \widehat{B}(x,y) = \begin{pmatrix}
%             B(x) & B(x,y) \\
%             B(x,y)^T & B(y)
%         \end{pmatrix}
%     \end{equation*}
%     Define
%     \begin{equation*}
%         \widehat{A}f(x,y) := \frac{1}{2} \Tr \left(\widehat{B}(x,y) D^2f(x,y) \right).
%     \end{equation*}
%     Then, $\widehat{A}$ is a coupling of $A$ as
%     \begin{align}
%         \widehat{A}(f_1 \oplus f_2)(x,y)  &= \frac{1}{2}\Tr\left(\widehat{B}(x,y) D^2(f_1 \oplus f_2)(x,y) \right) \\
%         &= \frac{1}{2}\Tr\left( B(x) D^2f(x) \right) + \frac{1}{2}\Tr\left( B(y) D^2f(y) \right)\\
%         &= Af_1 + Af_2
%     \end{align}
%     and it satisfies the maximum principle. 
% \end{lemma}

For the proof we make use of the following auxiliary lemma.

\begin{lemma}\label{lemma:coupling_diffusion_1}
    For each $x \in E$, let $B(x)$ be a positive semi-definite matrix and consider
    \begin{equation*}
        \bA f(x) = \frac{1}{2} \Tr\left(B(x)D^2 f(x)\right).
    \end{equation*}
    For any $x,x' \in E$, let $\widehat{B}(x,x')$ be a positive semi-definite matrix having block-structure
    \begin{equation*}
        \widehat{B}(x,x') = \begin{pmatrix}
            B(x) & B(x,x') \\
            B(x,x')^T & B(x')
        \end{pmatrix}.
    \end{equation*}
    Define
    \begin{equation*}
        \widehat{\bA}f(x,x') := \frac{1}{2} \Tr \left(\widehat{B}(x,x') D^2f(x,x') \right).
    \end{equation*}
    Then, $\widehat{\bA}$ is a coupling of $\bA$.
\end{lemma}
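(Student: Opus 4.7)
The proof is essentially a direct block-matrix computation, and the plan is to simply verify the defining property of a coupling, cf.\ Definition \ref{def:coupling:only_coupling}. First I would observe that, since the domain of $\bA$ in the lemma is not spelled out explicitly, one works with any $f_1, f_2$ which are $C^2$ (so that $D^2 f_i$ makes pointwise sense and the trace computation is meaningful); the inclusion $\cD(\bA) \oplus \cD(\bA) \subseteq \cD(\widehat{\bA})$ is then clear because $f_1 \oplus f_2$ is $C^2$ on $E \times E$ whenever $f_1, f_2$ are $C^2$ on $E$. Linearity of both $\bA$ and $\widehat{\bA}$ on their respective domains follows immediately from linearity of the trace.

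Next, the key computation: for $f_1, f_2 \in \cD(\bA)$ and $(x, x') \in E \times E$, the Hessian of $f_1 \oplus f_2$ at $(x, x')$ is the block-diagonal matrix
\begin{equation*}
    D^2 (f_1 \oplus f_2)(x, x') = \begin{pmatrix} D^2 f_1(x) & 0 \\ 0 & D^2 f_2(x') \end{pmatrix},
\end{equation*}
since the mixed partial derivatives in $x$ and $x'$ vanish. Multiplying by $\widehat{B}(x, x')$ and reading off the diagonal blocks, the off-diagonal pieces $B(x, x')$ and $B(x, x')^T$ are annihilated by the zero blocks of the Hessian, so that only the diagonal blocks $B(x) D^2 f_1(x)$ and $B(x') D^2 f_2(x')$ contribute to the trace.

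Taking the trace of the resulting block matrix and applying the definition of $\widehat{\bA}$, one obtains
\begin{equation*}
    \widehat{\bA}(f_1 \oplus f_2)(x, x') = \tfrac{1}{2} \Tr\bigl(B(x) D^2 f_1(x)\bigr) + \tfrac{1}{2} \Tr\bigl(B(x') D^2 f_2(x')\bigr) = \bA f_1(x) + \bA f_2(x'),
\end{equation*}
which is exactly the coupling identity of Definition \ref{def:coupling:only_coupling}.

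There is no real obstacle here: the positive semi-definiteness of $\widehat{B}(x, x')$ plays no role in the coupling identity itself (it matters only later, in Proposition \ref{proposition:coupling_diffusion}, to secure the maximum principle and the controlled growth estimate for $\widehat{\bA}$ via a Cholesky-type factorization). The sole substantive point is to carry the block-diagonal structure of the Hessian through the trace; everything else is bookkeeping.
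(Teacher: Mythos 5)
Your proposal is correct and follows essentially the same route as the paper: the block-diagonal structure of $D^2(f_1 \oplus f_2)$ annihilates the off-diagonal blocks of $\widehat{B}(x,x')$, and the trace splits into $\bA f_1(x) + \bA f_2(x')$. You are in fact slightly more careful than the paper, since you also verify the domain inclusion and linearity required by Definition \ref{def:coupling:only_coupling} and correctly observe that positive semi-definiteness of $\widehat{B}$ is not needed for the coupling identity itself.
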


\begin{proof}
    % {\color{red} 
    % REDO.
    
    % \sout{As $\widehat{A}$ is a second-order differential operator, it satisfies the maximum principle. By construction, $\widehat{A}$ is a coupling of $A$.}}
    \begin{align}
        \widehat{\bA}(f_1 \oplus f_2)(x,y)  &= \frac{1}{2}\Tr\left(\widehat{B}(x,y) D^2(f_1 \oplus f_2)(x,y) \right) \\
        &= \frac{1}{2}\Tr\left( B(x) D^2f(x) \right) + \frac{1}{2}\Tr\left( B(y) D^2f(y) \right)\\
        &= \bA f_1 + \bA f_2
    \end{align}
    and it satisfies the maximum principle.
\end{proof}

\begin{proof}[Proof of Proposition \ref{proposition:coupling_diffusion}]
    {\bfseries Controlled growth coupling:}
    By Lemma \ref{lemma:coupling_diffusion_1}, $\widehat{\bA}$ is a coupling for $\bA$. We thus verify that $\widehat{\bA}$ has controlled growth. Consider $\alpha >1$, $K \subseteq E$ a compact set, and $x, x',y ,y' \in K$. Then,
    \begin{align*}
        \widehat{\bA} \left(\frac{\alpha}{2} d^2_{x-y, x'-y'} \right)(x,x') &= \frac{1}{2}  \Tr\left(\widehat{\Sigma}^2(x,x') D^2\left(\frac{\alpha}{2} d^2_{x-y, x'-y'} \right)(x,x') \right)\\
        & = \frac{1}{2} \Tr\left(\widehat{\Sigma}^2(x,x') \left(\alpha
        \begin{pmatrix}
            \bONE & - \bONE \\
            - \bONE & \bONE
        \end{pmatrix}
        \right)(x,x') \right)\\
        & = \frac{\alpha}{2} \Tr((\Sigma^T(x)-\Sigma^T(y))(\Sigma(x)-\Sigma(y)))\\
        & \leq \alpha L^2_{\Sigma, K} d^2(x, x'),
    \end{align*}
    establishing controlled growth.

    {\bfseries Lyapunov control:}
    Using $V(x) = \log (1+\frac{x^2}{2})$ and the fact that $\Sigma$ has linear growth, we find that
    \begin{equation} \label{eq:second_order_ex:lyapunov}
        \sup_{x\in E} \bA V (x) = \sup_{x\in E} \frac{1}{2}\Tr \left( \Sigma(x)\Sigma^T(x) D^2 V(x) \right) < \infty.
    \end{equation}

    {\bfseries Compatibility:}
    Using $\xi_z(x) = \frac{1}{2} d^2(x,z)$, $\zeta_{z, p}(x) = \ip{p}{x - z}$ and $V(x)=\log(1+\frac{x^2}{2})$, we find for $z_0, z_1, z \in E$ and $p \in B_1(0)$
    \begin{align*}
        \bA (\Xi \circ s_z) (x) &= 2 \Tr(\Sigma(x)\Sigma^T(x)),\\
        \bA (V \circ s_z) (x) &= \frac{1}{2}\Tr \left( \Sigma(x)\Sigma^T(x) D^2 (V \circ s_z)(x) \right), 
    \end{align*}
    which, by an analogous calculation as in equation \eqref{eq:second_order_ex:lyapunov}, is continuous in $(x, z_0, p, z_1, z)$ and $(x,z)$. Consequently, $\bA$ is compatible.
\end{proof}

\subsection{Stochastic Example: Integral operators} \label{section:sub:integral}
In this section, we cover examples of spatially inhomogeneous Lévy processes that have generators of the type
    % \begin{equation} \label{eqn:def_of_Levy_type}
    %     \bA f(x) = \int \left[f(x+\bfz) - f(x) - \bONE_{B_1(0)} \ip{\bfz}{\nabla f(x)}\right] \mu_x(\dd \bfz)
    % \end{equation}
    \begin{equation} \label{eqn:def_of_Levy_type}
        \bA f(x) = \int \left[f(x+\bfz) - f(x) - \rchi_{B_1(0)}(\bfz) \ip{\bfz}{\nabla f(x)}\right] \mu_x(\dd \bfz),
    \end{equation}
where
$\rchi_{B_1(0)}(\bfz) = l (|\bfz|)$ for some smooth non-decreasing function $l$ satisfying $l = 1$ on a neighborhood of $0$ and $l(r) = 0$ for $r \geq 1$.

\smallskip

We next specify the space from which we can take our jump measures $\mu_x$. For this, we need to control the mass close to $0$ as for large values of $\bfz$. The following function controls both:
\begin{equation*}
    W(\bfz) := \rchi_{B_1(0)}(\bfz) |\bfz|^2 + (1-\rchi_{B_1(0)}(\bfz)) \log \left( 1 + |\bfz|^2 \right).
\end{equation*}
We take the family of jump measures $\{\mu_x\}_{x \in E}$ from the set of equivalence classes $\cM_W(\bR^q) := \cM(\bR^q)/\sim$ with
\begin{equation}
    \cM(\bR^q) \coloneqq \left\{ \mu \in M(\bR^q) \,\middle|\, \int W(\bfz) \, \mu(\dd \bfz) < \infty \right\},
\end{equation}
where $M(\bR^q)$ is the set of all Borel measures on $\bR^q$ and where
\begin{equation*}
    \mu \sim \nu \qquad \text{if and only if} \qquad \mu|_{\bR^q \setminus \{0\}} = \nu|_{\bR^q \setminus \{0\}}.
\end{equation*}
We topologize the set $\cM_W(\bR^q)$ by the weak topology $\sigma_W$ induced by the pairings
\begin{equation} \label{equation:definition_sigmaW}
    \mu \mapsto \int g(\bfz) \mu(\dd \bfz) \qquad \forall \, g \in C_W,
\end{equation}
where
\begin{equation*}
    C_W \coloneqq \left\{g \in C(\bR^q) \, \middle| \,  g(0) = 0, \text{ and } \sup_{\bfz \neq 0} \frac{|g(\bfz)|}{W(\bfz)} < \infty. \right\}.
\end{equation*}
Below, we construct controlled growth couplings for operators of the type \eqref{eqn:def_of_Levy_type}. To clarify the concepts, we consider the example of an uncompensated process, i.e., having an operator of the type
\begin{equation*}
    \bA f(x) = \int f(x+\bfz) - f(x) \mu_x(\dd \bfz).
\end{equation*}
Couplings for this type of operator are of the form
\begin{equation} \label{eqn:coupled_jump_operator}
    \widehat{\bA} f(x,x') = \int \left[f(x+\bfz_1,y+\bfz_2) - f(x,x')\right] \pi_{x,x'}(\dd \bfz_1,\bfz_2),   
\end{equation}
where $\pi_{x,x'}$ couples $\mu_x$ and $\mu_{x'}$. In the following example, we illustrate the need of being able to couple jumps synchronously.

% In the first example, we illustrate the need of being able to couple jumps. The second highlights the possibility of location dependent jump measures. The last example shows that $\mu_x$ does not necessarily have equal mass across different $x$, indicating the need to couple non-zero jumps to jumps with size $0$.

% \todo[inline]{R: fix sentence above, as only one is left}

\begin{example}[Random Walk]
Consider the simple random walk on $\bR$ making jumps of size $1$, i.e $\mu_x = \mu = \delta_{-1} + \delta_{1}$ leading to the operator
\begin{equation*}
    \bA f(x) = \left[f(x-1)  + f(x+1) - 2f(x)\right].
\end{equation*}
% A coupling will thus be an operator of the type \eqref{eqn:def_of_Levy_type} on $E\times E = \bR \times \bR$:
% \ri{
% \begin{equation*}
%     \widehat{\bA} f(x,x') = \int \left[f(x+\bfz_1,y+\bfz_2) - f(x,x')\right] \pi_{x,x'}(\dd \bfz_1,\bfz_2)   
% \end{equation*}
% }
% with appropriate two-dimensional integrability conditions. 
Well known couplings include walks with simultaneous jumps but independent directions, fully independent jumps, and synchronous jumps. The corresponding generators are given as in \eqref{eqn:coupled_jump_operator} with jump measures
\begin{align*}
    \pi^1 & := \mu \otimes \mu, \\
    \pi^2 & := \delta_{(-1,0)} + \delta_{(1,0)} + \delta_{(0,-1)} + \delta_{(0,1)} , \\
    \pi^3 & := \delta_{(-1,-1)} + \delta_{(1,1)},
\end{align*}
respectively. This leads to the operators
\begin{align*}
    \widehat{\bA}^1 f(x,x') & = 
             f(x-1,x'-1) + f(x-1,x'+1)  \\
    & \qquad \quad + f(x+1,x'-1) + f(x+1,x'+1) - 4 f(x,x'), \\
    \widehat{\bA}^2f(x,x') & = f(x-1,x') + f(x+1,x') - 2f(x,x')  \\
    & \qquad\quad + f(x,x'-1) + f(x,x'+1) - 2f(x,x') , \\
    \widehat{\bA}^3 f(x,x') & = f(x-1,x'-1) + f(x+1,x'+1) - 2f(x,x') .
\end{align*}
Only for the final example, we see that $\widehat{\bA}^3 d^2 \leq 0$, pointing at the necessity of the alignment of jumps.
\end{example}

Note that the third coupling above has different total mass, and we thus work outside the realm of the typical notion of couplings of probability measures. A second feature of coupling jump measures, not present in the example above, is that we can make one process jump, whereas the other does not.

We formalize this in the following definition.

% {\color{red} REMOVE? Move \cite{FiGi10} to a remark below?

% In general, we need to construct a measure $\pi_{x,x'}$ that combines $\mu_x$ and $\mu_x'$. The probabilistic notion of couplings, however, does not exactly work, as we are not working with probability measures, and, additionally, $\mu_x$ and $\mu_x'$ might not have the same total mass.

% This can be remedied by realizing that the amount of mass at $0$, i.e. jumps of size $0$, is irrelevant. The following notion of an extended coupling was introduced in \cite{FiGi10} in the context where mass can be moved to the boundary of a domain. In our context, this boundary is the point $0$.}

\begin{definition}
    Let $\mu,\nu \in \cM_W(\bR^q)$. We say that $\pi \in M(\bR^q \times \bR^q)$ is an \emph{extended coupling of $\mu$ and $\nu$}, if
    \begin{align*}
		\pi\left((A \setminus \{0\}) \times \bR^q \right) & = \mu(A \setminus \{0\}) \qquad \forall \, A \in \cB(\bR^q), \\
		\pi\left(\bR^q \times (B \setminus\{0\})\right) & = \nu(B \setminus \{0\}) \qquad  \forall \, B \in \cB(\bR^q).
	\end{align*}
\end{definition}

\begin{remark}
    A variant of this coupling was introduced in \cite{FiGi10}. There mass can be moved to the boundary of a domain. In our context, this boundary is the point $0$.
\end{remark}

% \todo[inline]{R : introduce topology $\sigma_{\widehat{W}}$ on $\cM_{\widehat{W}}(\bR^q \times \bR^q)$? by using $\widehat{W}(\bfz_1,\bfz_2) = W(\bfz_1) + W(\bfz_2)$.

% This is a bit special... You can make your tensor product in various ways, we do not want to take out only $(0,0)$ and not the union $\{(x,0)\} \cup \{(0,x)\}$}

\begin{definition}\label{def:extended_coupling}
    Let $x \mapsto \mu_x$ be a map from $E$ into $\cM(\bR^q)$. Let $(x,x') \mapsto \pi_{x,x'}$ be a map from $E^2$ into $M(\bR^q \times \bR^q)$.
    
    \begin{enumerate}[(a)]
        \item\label{item:extended_coupling:mu2pi} We say that $(x,x') \mapsto \pi_{x,x'}$ is an extended coupling of $x \mapsto \mu_x$, if for all $x,x' \in E$, we have that $\pi_{x,x'}$ is an extended coupling of $\mu_x$ and $\mu_x'$.
        \item\label{item:extended_coupling:lip} We say that $(x,x') \mapsto \pi_{x,x'}$ is locally Lipschitz, if, for any compact set $K \subseteq E$, there exits a constant $L_{\pi,K}$ such that, for $x,x' \in K$, we have
        \begin{equation*}
            \int  d^2(\bfz_1,\bfz_2) \pi_{x,x'}( \dd \bfz_1, \dd \bfz_2) \leq L_{\pi,K} d^2(x,x').
        \end{equation*}
    \end{enumerate}
\end{definition}

\begin{remark}
    Note that conditions $(12)$, $(34)$, and $(35)$ in \cite{BaIm08} for $\mu$ and $j$ correspond to our choice of $\cM_W(\bR^q)$ and locally Lipschitz extended coupling $\pi_{x,x'}$.
\end{remark}

% Corresponding to Example \ref{example:Levy_coupling_from_Lipschitzmap}, we obtain the following.
 
\begin{remark}
Let $\eta: \bR \rightarrow \bR$ be any locally Lipschitz map with local Lipschitz constants $L_{\eta,K}$. Set $\mu_x := \delta_{\eta(x)} \bONE_{\eta(x) \neq 0} (x)$ and $\pi_{x,x'} = \delta_{(\eta(x),\eta(x'))}$. 
Then, $(x,x') \mapsto \pi_{x,x'}$ is a locally Lipschitz coupling of $x \mapsto \mu_x$ with $L_{\pi,K} = L_{\eta,K}$.
\end{remark}

% \todo[inline]{R: perhaps for the revision. Does local Lipschitz imply continuity for $\sigma_{\widehat{W}}$?}

The main proposition of this subsection below aims to show that integral operators of the form \eqref{eqn:def_of_Levy_type} can be treated analogous to the other examples above. We work with the second collection of penalization functions, cf. Definition \ref{definition:levy_penalizations}, to avoid integrability issues.

%While we use the same containment function to deal with possible integrability issues, we \ri{work our second class of penalization functions, cf. Definition \ref{definition:levy_penalizations}. \sout{smoothly cut off the penalization in Definition \ref{definition:canonical_penalization}.}}

% using the typical choices of the containment and perturbation functions. To get around possible integrability issues, we smoothly cut off $\Xi_{z_0, p, z_1}$ outside of $B_{R'}(z_0)$ for some $R'>R$ with $R$ as in Definition \ref{definition:perturbation_first_second_order}. For this purpose we can, for example, use the cutoff $\Omega^+_{M}$, cf. Definition \ref{def:cutoff_function}, which we also use for the construction of the test functions in Section \ref{section:testfunctions_construction}.
% For the remainder of the section, we denote the cut off perturbations as
% \begin{equation}\label{eq:NonLocEx:Xi}
%     \overline{\Xi}_{z_0, p, z_1} (x)= \Omega^+_{M'} \circ \Xi_{z_0, p, z_1} (x),
% \end{equation}
% where $M' = \sup_{x \in B_{R'}(z_0)} \Xi_{z_0, p, z_1} (x)$.

\begin{proposition}\label{proposition:coupling_jump}
    Consider
    \begin{equation*}
        \bA f(x) = \int \left[f(x+\bfz) - f(x) - \rchi_{B_1(0)} \ip{\bfz}{\nabla f(x)}\right] \mu_x(\dd \bfz).
    \end{equation*}
    Suppose there exists a $\sigma_W$-continuous map $x \mapsto \mu_x$ in $\cM_W(\bR^q)$, cf.\ \eqref{equation:definition_sigmaW},
    and that there exists a locally Lipschitz extended coupling $(x,x') \mapsto \pi_{x,x'}$ of $x \mapsto \mu_x$ with Lipschitz constant $L_{\pi, K}$ and, for $\widehat{\rchi}(\bfz_1,\bfz_2) := \rchi_{B_1(0)}(\bfz_1) \rchi_{B_1(0)}(\bfz_2)$, set
    \begin{multline*}
        \widehat{\bA}g(x,x') := \int \Big[ g(x+\bfz_1,x'+\bfz_2) - g(x,x')\\
        - \widehat{\rchi}(\bfz_1,\bfz_2) \ip{(\bfz_1, \bfz_2)^T}{\nabla g(x, x')}\Big] \pi_{x,x'}(\dd \bfz_1, \dd \bfz_2).
    \end{multline*}
    Assume furthermore that
    \begin{equation*}
        \sup_{x \in E} \int \log\left(1 +  \frac{\frac{1}{2}|\bfz|^2  +  \ip{x}{\bfz}}{1+ \frac{1}{2} |x|^2 } \right) \mu_x (\dd \bfz) < \infty.
    \end{equation*}

    % \todo[inline]{We need additional assumptions. Building up the list here, nice writing can come later.}
% {\color{red}
% R: if all is well, this is superseded by $W$ continuity.

% REMOVE?

%     Assume furthermore that, for $\phi(\bfz) = \log ( 1 + |\bfz|^2)$ and any compact set $K \subseteq \bR^q$, we have
%     \begin{equation} \label{eqn:Levy_uniform_integrability}
%     \lim_{R \rightarrow \infty} \sup_{x \in K}  \int_{|\phi| \geq R} \phi(\bfz) \, \mu_x(\dd \bfz) = 0,
%     \end{equation}
%     % Integrability should be ok based on this assumption, but am struggling with the loss of continuity due to $B_1(0)$.
%     % and for Lyapunov:
%     }
    
    Then, $\bA$ is compatible, cf.\ Definition \ref{assumption:compatibility} \ref{item:compatA}, and linear on its domain admitting the controlled growth coupling $\widehat{\bA}$. Furthermore, $V=\log(1+\frac{x^2}{2})$ is a Lyapunov function:
    \begin{equation*}
        \sup_{x \in E} \bA V(x) < \infty.
    \end{equation*}
\end{proposition}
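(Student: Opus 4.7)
I would verify the three defining properties of a controlled growth coupling (coupling, maximum principle, controlled growth), the Lyapunov bound on $V$, and the compatibility conditions of Assumption \ref{assumption:compatibility}(a), largely by direct calculations exploiting the extended-marginal and Lipschitz structure of $\pi_{x,x'}$.

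\textbf{Coupling and maximum principle.} To verify $\widehat{\bA}(f_1 \oplus f_2) = \bA f_1 + \bA f_2$, expand the left-hand integrand using $\nabla(f_1 \oplus f_2)(x,x') = (\nabla f_1(x), \nabla f_2(x'))$ together with the definition of $\widehat{\rchi}$, and exploit that the resulting integrand, split into its $\bfz_1$- and $\bfz_2$-dependent parts, vanishes at the corresponding $\bfz_i = 0$. Using the marginal identities of Definition \ref{def:extended_coupling}(a), the contribution of each part reduces to an integral against $\mu_x$ or $\mu_{x'}$, recovering the right-hand side. The maximum principle is standard: at a global maximum $(x_0, x'_0)$ of $f_1 - f_2$ the gradient vanishes and $(f_1 - f_2)(x_0 + \bfz_1, x'_0 + \bfz_2) \leq (f_1 - f_2)(x_0, x'_0)$, so the integrand of $\widehat{\bA}(f_1 - f_2)(x_0, x'_0)$ is pointwise nonpositive.

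\textbf{Controlled growth.} With $g := \frac{\alpha}{2} d^2_{x-y,\,x'-y'}$, a direct computation yields $g(x, x') = \frac{\alpha}{2}|y-y'|^2$ and $\nabla g(x,x') = (\alpha(y-y'), -\alpha(y-y'))$. Substituting reduces the integrand of $\widehat{\bA}g(x,x')$ to
\[
    \tfrac{\alpha}{2}|\bfz_1 - \bfz_2|^2 + \alpha \bigl(1 - \widehat{\rchi}(\bfz_1, \bfz_2)\bigr)(\bfz_1 - \bfz_2)\cdot (y - y').
\]
The first term integrates to at most $\frac{\alpha}{2}L_{\pi,K} d^2(x,x')$ by Definition \ref{def:extended_coupling}(b). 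For the second, $(1 - \widehat{\rchi})$ is supported on $\{|\bfz_1| \vee |\bfz_2| \geq r_0\}$ for some $r_0 > 0$, on which the $\pi_{x,x'}$-mass is bounded locally uniformly by $\mu_x(B_{r_0}^c) + \mu_{x'}(B_{r_0}^c)$, finite by $W$-integrability. Cauchy--Schwarz combined with the Lipschitz $L^2$-bound on $|\bfz_1 - \bfz_2|$ then gives $\int(1-\widehat{\rchi})|\bfz_1 - \bfz_2|\,\pi_{x,x'} \leq C_K d(x,x')$. Setting $\Delta := d(x,y) + d(y,y') + d(y',x')$ and applying AM--GM produces a bound of the form $C'_K(\alpha\Delta^2 + \Delta)$, matching Definition \ref{def:coupling:growth}.

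\textbf{Lyapunov, compatibility, and main obstacle.} The identity
\[
    V(x + \bfz) - V(x) = \log\!\left(1 + \frac{\tfrac{1}{2}|\bfz|^2 + \ip{x}{\bfz}}{1 + \tfrac{1}{2}|x|^2}\right)
\]
reduces the large-jump part of $\bA V$ to the quantity controlled by hypothesis, while for small jumps Taylor's theorem together with the boundedness of $\nabla V$ and $D^2 V$ bounds the integrand by $C|\bfz|^2$, uniformly $\mu_x$-integrable since $\int_{B_1(0)}|\bfz|^2\,\mu_x \leq \int W\,d\mu_x$; hence $\sup_x \bA V(x) < \infty$. Compatibility with the Collection 2 penalizations is obtained from the fact that $\overline{\xi}_z$ and $\overline{\zeta}_{z,p}$ have bounded derivatives supported in $B_{R''}(z)$, so the integrands produced by $\bA$ lie in $C_W$; dominated convergence combined with the $\sigma_W$-continuity of $x \mapsto \mu_x$ then yields the continuities required by Assumption \ref{assumption:compatibility}(a), with the containment function $V$ handled by the same splitting. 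The main obstacle is the cross-term in the controlled growth estimate: unlike in the drift or diffusion cases, the asymmetry between the marginal compensator $\rchi_{B_1(0)}$ in $\bA$ and the product compensator $\widehat{\rchi}$ in $\widehat{\bA}$ produces a nonvanishing remainder that can only be controlled by pairing the Lipschitz $L^2$-alignment of jumps (Definition \ref{def:extended_coupling}(b)) with the Lévy tail-integrability of $\mu_x$ via Cauchy--Schwarz.
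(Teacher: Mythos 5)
Your proposal is correct and follows essentially the same route as the paper: the same reduction of the integrand of $\widehat{\bA}\big(\tfrac{\alpha}{2}d^2_{x-y,x'-y'}\big)$ to $\tfrac{\alpha}{2}|\bfz_1-\bfz_2|^2+\alpha(1-\widehat{\rchi}(\bfz_1,\bfz_2))\ip{\bfz_1-\bfz_2}{y-y'}$ (the paper's Lemma \ref{lemma:control_difference_dsquared}), the same logarithmic identity and Taylor bound for $V$ (Lemma \ref{lemma:boundLevy}), and the same combination of uniform $C_W$-bounds with the $\sigma_W$-continuity of $x\mapsto\mu_x$ for compatibility (the paper's Lemma \ref{lemma:converging_integrals}). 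The only cosmetic difference is that you control the cross term via Cauchy--Schwarz against the Lipschitz $L^2$-alignment of jumps, whereas the paper applies Young's inequality to split it into $(1-\widehat{\rchi})\,d^2(\bfz_1,\bfz_2)$ and $(1-\widehat{\rchi})\,d^2(y,y')$; both yield the required $O(\alpha\Delta^2)$ modulus.
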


\begin{remark}
    Corresponding to Remark \ref{remark:martingale_problem}, we refer to \cite[Corollary 2.3]{Bas88} for a uniqueness result for a Lévy process martingale problem.
\end{remark}

The proof of Proposition \ref{proposition:coupling_jump} is based on the following two auxiliary lemmas. In the first, we obtain bounds on the integrand of our operator acting on the Lyapunov function $V$. In the second, we compute the integrand of our Lévy type operator acting on the shifted squared metric. We prove these two lemmas following the proof of Proposition \ref{proposition:coupling_jump}.

% {\color{red} where to put it?}
% \begin{proposition}[Taylor's Theorem] \label{proposition:Taylor}

% {\color{red} citation}

%     Let $f \in C^2(E)$, then for any $\bfz \in B_1(0)$, we have
%     \begin{equation*}
%         \left| f(x+ \bfz) - f(x) - \ip{\nabla f(x)}{\bfz} \right| \leq \frac{1}{2} |\bfz|^2 \ssup{\sup_{i,j}| \nabla_{i,j}^2 f(x + \cdot)|}_{B_1(0)}.
%     \end{equation*}
% \end{proposition}
\newpage

\begin{lemma} \label{lemma:boundLevy}
Fix $x, z \in E$.
\begin{enumerate}[(a)]
    \item \label{item:boundLevy_large}  For $\bfz \in \bR^q$, we have
    \begin{align}
    - \log \left(1 + \frac{1}{2}|x-z|^2 \right) & \leq   V \circ s_z(x + \bfz) - V \circ s_z(x) \\
    & \leq \log\left(1 +  \frac{\frac{1}{2}|\bfz|^2  +  \ip{x-z}{\bfz}}{1+ \frac{1}{2} (x - z)^2 } \right) \label{eqn:lemma:boundLevyLyapunov} \\
    &  \leq \log \left(1 + |\bfz|^2 \right). \label{eqn:lemma:boundLevyDomConvergence} 
    \end{align}   
    \item \label{item:boundLevy_small} For $\bfz \in B_1(0)$, we have
    \begin{align*}
        & \left| V \circ s_z(x + \bfz) - V \circ s_z(x) - \ip{\bfz}{\nabla (V \circ s_z)(x)}\right| \leq \frac{1}{2} |\bfz|^2.
    \end{align*}
\end{enumerate}
\end{lemma}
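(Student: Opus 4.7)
The plan is to handle part (a) by direct calculation, exploiting the explicit form $V(y) = \log(1+\tfrac{1}{2}|y|^2)$, and part (b) by a Taylor-remainder estimate reduced to a uniform bound on the operator norm of $D^2 V$.

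For part (a), the lower bound $-\log(1+\tfrac{1}{2}|x-z|^2) \leq V\circ s_z(x+\bfz) - V\circ s_z(x)$ is immediate upon rearrangement, since it is equivalent to $V\circ s_z(x+\bfz) \geq 0$, which follows from $V \geq 0$. The middle relation \eqref{eqn:lemma:boundLevyLyapunov} is in fact an equality, obtained by expanding
\[
|x-z+\bfz|^2 = |x-z|^2 + 2\ip{x-z}{\bfz} + |\bfz|^2
\]
and applying $\log a - \log b = \log(a/b)$ with $a = 1+\tfrac{1}{2}|x-z+\bfz|^2$ and $b = 1+\tfrac{1}{2}|x-z|^2$. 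For the final inequality \eqref{eqn:lemma:boundLevyDomConvergence}, monotonicity of $\log$ reduces the claim to
\[
\tfrac{1}{2}|\bfz|^2 + \ip{x-z}{\bfz} \leq |\bfz|^2 \bigl(1+\tfrac{1}{2}|x-z|^2\bigr),
\]
which I would obtain by first applying Cauchy--Schwarz $\ip{x-z}{\bfz} \leq |x-z||\bfz|$ and then using the scalar identity $2a/(1+a^2) \leq 1$ with $a = |x-z|$, valid once $|\bfz| \geq 1$.

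For part (b), I would apply Taylor's theorem with integral remainder to the scalar map $t \mapsto V\circ s_z(x + t\bfz)$ on $[0,1]$, writing
\[
V\circ s_z(x+\bfz) - V\circ s_z(x) - \ip{\bfz}{\nabla(V\circ s_z)(x)} = \int_0^1 (1-t)\, \bfz^\top D^2 V(x-z+t\bfz)\, \bfz \, \dd t,
\]
where I used that $D^2(V \circ s_z)(y) = D^2 V(y-z)$. A direct computation gives
\[
D^2 V(y) = \frac{I}{1+\tfrac{1}{2}|y|^2} - \frac{yy^\top}{\bigl(1+\tfrac{1}{2}|y|^2\bigr)^2},
\]
whose eigenvalues are $\tfrac{1}{1+\frac{1}{2}|y|^2}$ (in the $q-1$ directions orthogonal to $y$) and $\tfrac{1-\frac{1}{2}|y|^2}{(1+\frac{1}{2}|y|^2)^2}$ (along $y$); a short calculus check shows that both are bounded in absolute value by $1$ uniformly in $y$, so $\|D^2 V(y)\|_{\mathrm{op}} \leq 1$. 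Plugging this into the remainder yields the desired bound $\tfrac{1}{2}|\bfz|^2$.

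The estimates themselves are elementary and I do not expect a genuine obstacle. The one subtlety is that the last inequality in (a), as literally stated, holds only for $|\bfz| \geq 1$: for small $|\bfz|$, the middle expression is of order $|\bfz|$ while $\log(1+|\bfz|^2)$ is of order $|\bfz|^2$. This is in fact consistent with the architecture of the lemma, since part (b) supplies the sharper quadratic bound near the origin and part (a) will only be invoked for large-jump contributions in the subsequent dominated-convergence arguments.
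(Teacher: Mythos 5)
Your proof is correct and follows essentially the same route as the paper: rewrite the increment as $\log\bigl(1+\tfrac{\frac12|\bfz|^2+\ip{x-z}{\bfz}}{1+\frac12|x-z|^2}\bigr)$ (so the middle bound is an equality), deduce the lower bound from $V\geq 0$, and prove (b) by Taylor expansion with a uniform bound on the Hessian of $V$ (your operator-norm bound $\|D^2V\|_{\mathrm{op}}\leq 1$ is in fact the cleaner way to close the Taylor remainder; the paper uses an entrywise bound). Your caveat about \eqref{eqn:lemma:boundLevyDomConvergence} is also correct and exposes a genuine slip in the paper's own chain: the step $\log\bigl(2+\tfrac{|\bfz|^2-1}{1+\frac12|x-z|^2}\bigr)\leq\log(1+|\bfz|^2)$ fails for $|\bfz|<1$ (e.g.\ $q=1$, $x-z=1$, $\bfz=\tfrac12$ gives $\log(17/12)>\log(5/4)$), so the last inequality of (a) should be restricted to $|\bfz|\geq 1$; as you note, this is harmless because the bound is only invoked against $(1-\rchi_{B_1(0)})$ in the Lyapunov and compatibility estimates, while part (b) covers the small-jump regime.
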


\begin{lemma} \label{lemma:control_difference_dsquared}
    We have
    \begin{align*}
        & \begin{multlined}
            \frac{1}{2} d^2_{x-y,x'-y'}(x + \bfz_1, x' + \bfz_2) - \frac{1}{2} d^2_{x-y,x'-y'}(x, x') \\ - \widehat{\rchi}(\bfz_1,\bfz_2) \ip{
            \begin{pmatrix}
                \bfz_1 \\ \bfz_2
            \end{pmatrix}
            }{\nabla \left(\frac{1}{2} d^2_{x-y,x'-y'}\right) (x, x')}
        \end{multlined} \\
        &\leq \left(1 - \frac{1}{2}\widehat{\rchi}(\bfz_1,\bfz_2) \right) d^2(\bfz_1,\bfz_2) + (1 - \widehat{\rchi}(\bfz_1,\bfz_2)) \frac{1}{2} d^2(y,y').
    \end{align*}
\end{lemma}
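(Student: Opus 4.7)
The statement is a direct calculation followed by a rearrangement, so my plan is to reduce everything to a simple Young-type inequality. First I will introduce the shorthand $u := y - y'$ and $w := \bfz_1 - \bfz_2$ to declutter the algebra. Recalling the definition $d_{z_1, z_2}(a, b) = |a - z_1 - (b - z_2)|$ and taking $z_1 = x-y$, $z_2 = x'-y'$, a direct substitution shows
\begin{equation*}
    \tfrac12 d^2_{x-y, x'-y'}(x + \bfz_1, x' + \bfz_2) - \tfrac12 d^2_{x-y, x'-y'}(x, x')
    = \tfrac12 |u + w|^2 - \tfrac12 |u|^2
    = \ip{u}{w} + \tfrac12 |w|^2,
\end{equation*}
since $(x + \bfz_1) - (x' + \bfz_2) - [(x-y) - (x'-y')]$ simplifies to $u + w$.

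Next, I will compute the gradient term. Writing $f(p,p') := \tfrac12|p - p' - c|^2$ with $c := (x-y) - (x'-y')$, partial differentiation gives $\nabla_p f(x,x') = y - y'$ and $\nabla_{p'} f(x,x') = y' - y$, so that
\begin{equation*}
    \widehat{\rchi}(\bfz_1,\bfz_2) \ip{\begin{pmatrix} \bfz_1 \\ \bfz_2 \end{pmatrix}}{\nabla \bigl(\tfrac{1}{2} d^2_{x-y, x'-y'}\bigr)(x, x')}
    = \widehat{\rchi}(\bfz_1, \bfz_2)\, \ip{u}{w}.
\end{equation*}
Combining the two computations, the left-hand side of the lemma equals $(1 - \widehat{\rchi})\ip{u}{w} + \tfrac12 |w|^2$.

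Finally, the desired inequality becomes
\begin{equation*}
    (1 - \widehat{\rchi}) \ip{u}{w} + \tfrac{1}{2}|w|^2 \leq \bigl(1 - \tfrac{1}{2}\widehat{\rchi}\bigr) |w|^2 + (1 - \widehat{\rchi}) \tfrac{1}{2} |u|^2,
\end{equation*}
which I will rearrange to
\begin{equation*}
    \tfrac{1 - \widehat{\rchi}}{2}\bigl[|u|^2 - 2\ip{u}{w} + |w|^2\bigr] = \tfrac{1 - \widehat{\rchi}}{2} |u - w|^2 \geq 0.
\end{equation*}
Since the cutoff $\rchi_{B_1(0)}$ takes values in $[0,1]$, we have $\widehat{\rchi}(\bfz_1, \bfz_2) = \rchi_{B_1(0)}(\bfz_1)\rchi_{B_1(0)}(\bfz_2) \in [0,1]$, hence $1 - \widehat{\rchi} \geq 0$ and the inequality follows. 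There is no genuine obstacle here; the only point requiring care is the correct identification of the gradient of the shifted squared distance, which is why I would spell out the substitution $u = y - y'$, $w = \bfz_1 - \bfz_2$ explicitly at the outset.
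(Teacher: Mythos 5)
Your proposal is correct and follows essentially the same route as the paper: evaluate the shift maps to reduce the left-hand side to $(1-\widehat{\rchi})\ip{y-y'}{\bfz_1-\bfz_2} + \tfrac12|\bfz_1-\bfz_2|^2$, then close with Young's inequality (your completion of the square $\tfrac{1-\widehat{\rchi}}{2}|u-w|^2 \geq 0$ is exactly that step, using $1-\widehat{\rchi}\geq 0$). No gaps.
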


\begin{proof}[Proof of Proposition \ref{proposition:coupling_jump}]
    {\bfseries Controlled growth coupling}:
    As $(x, x')\mapsto\pi_{x,x'}$ is a locally Lipschitz extended coupling of $x\mapsto\mu_x$, cf. Definition \ref{def:extended_coupling}, we have that $\widehat{\bA}$ is a coupling. Thus, we need to verify the controlled growth property of $\widehat{\bA}$.

    Let $x, x', y, y' \in K$ for $K \subseteq E$ a compact set. Using Lemma \ref{lemma:control_difference_dsquared}, we then have
    \begin{align*}
        \widehat{\bA} \left(\frac{\alpha}{2}d^2_{x-y,x'-y'}\right)(x,x') &\leq \frac{\alpha}{2} \int \left(1 - \frac{1}{2}\widehat{\rchi}(\bfz_1,\bfz_2) \right) d^2(\bfz_1,\bfz_2) \pi_{x,x'}(\dd \bfz_1,\dd \bfz_2)\\
        &\qquad + \frac{\alpha}{2} \int (1 - \widehat{\rchi}(\bfz_1,\bfz_2)) \frac{1}{2} d^2(y,y')  \pi_{x,x'}(\dd \bfz_1,\dd \bfz_2) \\ 
        &\leq \frac{\alpha}{2} L_{\pi, K} d^2(x, x') + \frac{\alpha}{4} c'_\pi d^2(y, y'),
    \end{align*}
    where the second inequality is due to the local Lipschitz property of the map $(x,x') \mapsto \pi_{x,x'}$ and $c'_\pi > 0$ exists since, for every $x, x' \in E$, $\pi_{x, x'} \in M(\bR^q \times \bR^q)$. As such, $\bA$ admits the controlled growth coupling $\widehat{\bA}$.

    {\bfseries Lyapunov control:}
    Using Lemma \ref{lemma:boundLevy}, we find
    \begin{equation}\label{eq:non_local_ex:lyapunov}
        \sup_{x \in E} \bA V (x) \leq \sup_{x \in E} \int (1 - \rchi_{B_1(0)})\log(1+|\bfz|^2) + \rchi_{B_1(0)} |\bfz|^2 \mu_x (\dd \bfz)<\infty.
    \end{equation}

    {\bfseries Compatibility:}
    % Using the cut-off version $\overline{\Xi}_{z_0, p, z_1}$, cf. equation \eqref{eq:NonLocEx:Xi}, we find for $x, z_0, z_1, z \in E$ and $p \in B_1(0)$, that there exists a constant $c'_{\overline{\Xi}}>0$ and $c'_{V}>0$ as in equation \eqref{eq:non_local_ex:lyapunov} such that
    % \begin{align*}
    %     \bA(\overline{\Xi}_{z_0, p, z_1} \circ s_z)(x)  &\leq \int c'_{\overline{\Xi}}(1 \wedge |\bfz|^2) \mu_x(\dd \bfz) < \infty, \nonumber\\
    %     \bA (V \circ s_z)(x) &\leq \int c'_V (1+ (|\bfz|\wedge|\bfz|^2))\mu_x (\dd \bfz)<\infty.
    % \end{align*}
    % Consequently, we can use the dominated convergence theorem to show that the maps
    % \begin{align*}
    %     (x, z_0, p, z_1, z) &\mapsto \bA (\overline{\Xi}_{z_0, p, z_1} \circ s_z) (x),\\
    %     (x, z) &\mapsto \bA (V \circ s_z)(x)
    % \end{align*}
    % are continuous. As such, $\bA$ is compatible.
    % {\color{red} Compatibility needs a serious upgrade. Not at all trivial..}
    We start by establishing the continuity of $(x,z) \mapsto \bA (V \circ s_z)(x)$. Let $(x_n,z_n)$ converge to $(x,z)$. We aim to apply Lemma \ref{lemma:converging_integrals} with $\cX = \bR^q \setminus \{0\}$, $\nu_n = \mu_{x_n}$, and  
    \begin{align*}
        \phi_n(\bfz) & := V \circ s_{z_n}(x_n + \bfz) - V \circ s_{z_n}(x_n) - \rchi_{B_1(0)} \ip{\bfz}{\nabla (V \circ s_{z_n})(x_n)}, \\
        \phi_\infty(\bfz) & := V \circ s_{z}(x + \bfz) - V \circ s_{z}(x) - \rchi_{B_1(0)} \ip{\bfz}{\nabla (V \circ s_z)(x)}.
    \end{align*}
    As $\phi_n$ is continuous, it remains to show that $\sup_{n \in \bN} \sup_{\bfz \neq 0} \frac{|\phi_n (\bfz)|}{W(\bfz)} < \infty$.
    By Lemma \ref{lemma:boundLevy}, we can estimate
    \begin{align*}
        |\phi_n(\bfz)| & \leq \rchi_{B_1(0)} \frac{1}{2} |\bfz|^2 + (1 - \rchi_{B_1(0)}) \max \left\{ - \log \left( 1 + \frac{1}{2}\left| x_n-z_n \right|^2\right), \log \left( 1+|\bfz|^2 \right) \right\}.
    \end{align*}
    Since $(x_n, z_n)$ is convergent, hence bounded, we obtain the desired estimate. Continuity of $(x,z) \mapsto \bA (V \circ s_z)(x)$ now follows by Lemma  \ref{lemma:converging_integrals}.

    Using the particular form of $\Xi_{z_0,p,z_1}$, cf.\ Definition \ref{definition:levy_penalizations}, one readily verifies that the map $(x,z_0,p,z_1,z)  \mapsto \bA \left( \Xi_{z_0,p,z_1} \circ s_z\right)(x)$ is continuous with an analogous argumentation.
\end{proof}

\begin{proof}[Proof of Lemma \ref{lemma:boundLevy}]

%We start with the proof of \ref{item:boundLevy_large}. 
Let $y = x-z$, then we can write
\begin{multline*}
 V \circ s_z(x + \bfz) - V \circ s_z(x) \\
    = \log\left(1+ \frac{1}{2} (y + \bfz)^2 \right) - \log\left(1+ \frac{1}{2} |y|^2 \right)  = \log\left(1 +  \frac{\frac{1}{2} |\bfz|^2  + \ip{y}{\bfz}}{1+ \frac{1}{2} |y|^2 } \right).
\end{multline*}
Applying Young's inequality to $\ip{y}{\bfz}$ leads to the upper bound
\begin{equation*}
     V \circ s_z(x + \bfz) - V \circ s_z(x) \leq \log\left(1 +  \frac{|\bfz|^2  +  \frac{1}{2}|y|^2}{1+ \frac{1}{2} |y|^2 } \right)  =  \log\left(2 +  \frac{ |\bfz|^2 - 1}{1+ \frac{1}{2} |y|^2 } \right) \leq \log \left(1 + |\bfz|^2 \right).
\end{equation*}
Using that the first term is positive, we obtain the lower bound
\begin{equation*}
    V \circ s_z(x + \bfz) - V \circ s_z(x) \geq \log\left(1 -  \frac{\frac{1}{2}|y|^2}{1+ \frac{1}{2} |y|^2 } \right) = - \log \left(1 + \frac{1}{2}|y|^2 \right).
\end{equation*}
This establishes \ref{item:boundLevy_large}. For the proof of \ref{item:boundLevy_small}, we apply Taylor's Theorem to obtain
\begin{align*}
    \left| V \circ s_z(x + \bfz) - V \circ s_z(x) - \ip{\nabla (V \circ s_z)(x)}{\bfz} \right| & \leq \frac{1}{2} |\bfz|^2 \sup_{\bfz \in B_1(0)} \sup_{i,j} \left|\nabla^2_{i,j} V(y + \bfz) \right| \\
    & \leq \frac{1}{2} |\bfz|^2,
\end{align*}
which follows by a direct inspection of 
\begin{equation*}
    \nabla^2_{i,j} V(x) = \frac{2\delta_{i,j}\left(1 + \frac{1}{2} |x|^2\right) - 2 x_i x_j}{(1 + \frac{1}{2} |x|^2)^2}.
\end{equation*}

\end{proof}

\begin{proof}[Proof of Lemma \ref{lemma:control_difference_dsquared}]
    Evaluating the shift maps, calculating the gradient of the squared Euclidean distance, and expanding the squares leads to
    \begin{align*}
        & \begin{multlined}
            \frac{1}{2} d^2_{x-y,x'-y'}(x + \bfz_1, x' + \bfz_2) - \frac{1}{2} d^2_{x-y,x'-y'}(x, x') \\ - \widehat{\rchi}(\bfz_1,\bfz_2) \ip{
            \begin{pmatrix}
                \bfz_1 \\ \bfz_2
            \end{pmatrix}
            }{\nabla \left(\frac{1}{2} d^2_{x-y,x'-y'}\right) (x, x')}
            \end{multlined} \\
        & = \frac{1}{2} d^2(y + \bfz_1, y' + \bfz_2) - \frac{1}{2} d^2(y, y') -  \widehat{\rchi}(\bfz_1,\bfz_2) \ip{y-y'}{\bfz_1 - \bfz_2}\\
        & = \frac{1}{2}d^2(\bfz_1,\bfz_2) + \ip{y-y'}{\bfz_1 - \bfz_2} - \widehat{\rchi}(\bfz_1,\bfz_2) \ip{y-y'}{\bfz_1 - \bfz_2}\\
        & \leq \left(1 - \frac{1}{2}\widehat{\rchi}(\bfz_1,\bfz_2) \right) d^2(\bfz_1,\bfz_2) + (1 - \widehat{\rchi}(\bfz_1,\bfz_2)) \frac{1}{2} d^2(y,y'),
    \end{align*}
    where in the second equality we use properties of the Euclidean distance $d$ and the final line is due to Young's inequality.
\end{proof}

\section{Construction of test functions} \label{section:testfunctions_construction}

In classical proofs of comparison principles, the approach to estimate $\sup u-v$ for a subsolution $u$ and supersolution $v$ is variable doubling or quadruplication, cf. \cite[Theorem 3.1]{BaCD97} or \cite[introduction of Section 3]{CIL92}: For $\alpha > 1$
\begin{equation} \label{eqn:duplicationvariables_1}
    \sup_{x \in E} u(x) - v(x) \leq \sup_{x,x' \in E} u(x) - v(x') - \frac{\alpha}{2} d^2(x,x').
\end{equation}
Letting $\alpha \rightarrow \infty$, forces optimizing points, if they exist, of the right-hand side together. In addition, by varying either of the two components, one obtains basic test functions in terms of $\frac{\alpha}{2}d^2$ for the use in the definition of the sub- and supersolution properties of $u$ and $v$. 

\smallskip

To ensure that optimizers in \eqref{eqn:duplicationvariables_1} exist, we will consider instead, for small $\varepsilon > 0$, the following problem that includes the containment function $V$ and upper bounds $\sup u-v$ up to a term of order $\varepsilon$:
\begin{multline} \label{eqn:duplicationvariables}
    \sup_{x \in E} \frac{1}{1-\varepsilon} u(x) - \frac{1}{1+\varepsilon} v(x) \\
    \leq \sup_{x,x' \in E} \frac{1}{1-\varepsilon} u(x) - \frac{1}{1+\varepsilon} v(x') - \frac{\alpha}{2} d^2(x,x') - \frac{\varepsilon}{1-\varepsilon}V(x) - \frac{\varepsilon}{1+\varepsilon}V(x').
\end{multline}
The particular form of the factors $1-\varepsilon$ and $1+\varepsilon$ is motivated by convexity based arguments, which will show up in the proofs of Proposition \ref{proposition:basic_comparison_using_Hestimate} and Theorem \ref{th:comparison_HJI} below.

\smallskip

The procedure in \eqref{eqn:duplicationvariables} would be sufficient for a standard, first-order Hamilton--Jacobi equation. The test functions produced by this procedure, however, will not be sufficient to treat second-order or integral operators. This problem was considered in \cite{CIL92} and \cite{BaIm08}. We will follow their approach by considering a quadruplication of variables, which we also phrase in terms of sup- and inf-convolutions. We then perform a Jensen-type perturbation.

As we aim to unify proofs for both integral and differential operators, we revisit the full proof and state our result in terms of test functions.

\smallskip

In Propositions \ref{proposition:optimizing_point_construction} and \ref{proposition:test_function_construction} below, which can be considered to be an extended two-variable variant of the Crandall--Ishii construction \cite[Theorem 3.2]{CIL92}, we start out by considering the optimization \eqref{eqn:duplicationvariables} in terms of the sup- and inf-convolution of $u$ and $v$, respectively, effectively leading to a quadruplication problem, see \eqref{eqn:optimizing_points_Lambda} below.

We then perform the Jensen perturbation, see \eqref{eqn:proposition:optimizing_point_construction:optimizing_points}. The rest of the proposition deals with various properties of the optimizers in relation to $u$ and $v$.

\smallskip

In Proposition \ref{proposition:test_function_construction}, we carry out an additional layer of smoothing operations to obtain $C^\infty$-test functions. Consequently, we can move away from the notion of solutions in terms of sub- and superjets, which is of paramount importance to effectively treat diffusive and jump-type processes in a common framework.

For readability, we express suprema and infima using $\ssup{\cdot}$ and $\iinf{\cdot}$, respectively, as defined in Section \ref{sec:framework_notations}.

\begin{proposition}[Construction of optimizers]\label{proposition:optimizing_point_construction}
    Let $u$ be bounded and upper semi-continuous, $v$ be bounded and lower semi-continuous, $V$ be a containment function as in Definition \ref{definition:perturbation_containment}, and $\{\zeta_{z,p}\}_{z \in E, p \in \bR^q} \subset C(E)$ and $\{\xi_{z}\}_{z \in E} \subset C^1(E)$ be collections of functions as in Definition \ref{definition:perturbation_first_second_order}. Fix $\varepsilon \in (0,1)$ and $\varphi \in (0,1]$. 

    \smallskip
        
    Then, there exist compact sets $K_{\varepsilon,0} \subseteq K_\varepsilon \subseteq E$ and, for any $\alpha > 1$, three pairs of variables $(y_{\alpha,0},y_{\alpha,0}')$, $(y_{\alpha},y_{\alpha}')$, $(x_{\alpha},x_{\alpha}')$ in $E^2$ and $p_{\alpha},p_{\alpha}' \in B_{1/\alpha}(0)$ such that the following four sets of properties hold.

    \begin{description}
        \item[Properties of $y_{\alpha,0},y_{\alpha,0}'$]
    \end{description} 
    
    The variables $y_{\alpha,0},y_{\alpha,0}'$ optimize $\ssup{\Lambda_\alpha}$, where
    \begin{multline} \label{eqn:optimizing_points_Lambda}
        \Lambda_\alpha(y,y') := \frac{1}{1-\varepsilon} P^{\alpha}[u](y) - \frac{1}{1+\varepsilon} P_{\alpha}[v](y') - \frac{\alpha}{2}d^2(y,y')  \\
        - \frac{\varepsilon}{1-\varepsilon} (1-\varphi) V(y) - \frac{\varepsilon}{1+\varepsilon} (1-\varphi) V(y')
    \end{multline}
    and satisfy the following property
    \begin{enumerate}[(a)]
        \item \label{item:proposition:optimizing_point_construction_compact0} $y_{\alpha,0},y_{\alpha,0}' \in K_{\varepsilon,0}$.
    \end{enumerate}

    \begin{description}
        \item[Properties of $y_{\alpha},y_{\alpha}'$ and $p_{\alpha},p_{\alpha}'$]
    \end{description} 

    The pair $y_{\alpha},y_{\alpha}'$ optimizes
    \begin{equation} \label{eqn:proposition:optimizing_point_construction:optimizing_points}
        \ssup{ \Lambda_\alpha  - \frac{\varepsilon}{1-\varepsilon} \varphi \Xi^0_1 - \frac{\varepsilon}{1+\varepsilon} \varphi \Xi^0_2  }
    \end{equation}
    and uniquely optimizes
    \begin{equation} \label{eqn:proposition:optimizing_point_construction:optimizing_points_unique}
        \ssup{\Lambda_\alpha  - \frac{\varepsilon}{1-\varepsilon} \varphi \Xi_1 - \frac{\varepsilon}{1+\varepsilon} \varphi \Xi_2  }
    \end{equation}
    where $\Lambda_\alpha$ is as in \eqref{eqn:optimizing_points_Lambda} and 
    \begin{align*}
        \Xi_1^0(y) &\coloneqq \Xi^0_{y_{\alpha,0}, p_{\alpha}} (y), &
        \Xi_2^0(y') &\coloneqq \Xi^0_{y_{\alpha,0}', p_{\alpha}'} (y'),\\
        \Xi_1(y) &\coloneqq \Xi_{y_{\alpha,0}, p_{\alpha}, y_{\alpha}}(y), &  \Xi_2(y') &\coloneqq \Xi_{y_{\alpha,0}', p_{\alpha}', y_{\alpha}'}(y')
    \end{align*}
    as in Definition \ref{definition:perturbation_first_second_order}.    Moreover, the optimizers $y_{\alpha},y_{\alpha}'$ of \eqref{eqn:proposition:optimizing_point_construction:optimizing_points} and \eqref{eqn:proposition:optimizing_point_construction:optimizing_points_unique} satisfy
    \begin{enumerate}[(a),resume]
        \item \label{item:proposition:optimizing_point_Jensencontrol} We have
        \begin{equation*}
            d(y_{\alpha},y_{\alpha,0})  \leq \frac{1}{\alpha}, \qquad
            d(y_{\alpha}',y_{\alpha,0}')  \leq \frac{1}{\alpha}.
        \end{equation*}
        \item \label{item:proposition:optimizing_point_construction_twice_diff} $P^\alpha[u]$ and $P_\alpha[v]$ are twice differentiable in $y_{\alpha}$ and $y_{\alpha}'$, respectively.
    \end{enumerate}

    \begin{description}
        \item[Properties of $x_{\alpha},x_{\alpha}'$]
    \end{description} 
        The variables $x_{\alpha}, x_{\alpha}'$ optimize
    \begin{equation}\label{eqn:construction_optimizing_x}
        \begin{aligned}
            P^\alpha[u](y_{\alpha}) &  = u(x_{\alpha}) - \frac{\alpha}{2} d^2(x_{\alpha}, y_{\alpha}), \\
            P_\alpha[v](y_{\alpha}') & = v(x_{\alpha}') + \frac{\alpha}{2} d^2(x_{\alpha}',y_{\alpha}'),
        \end{aligned}
    \end{equation}
    and satisfy
    \begin{enumerate}[(a),resume]
        \item \label{item:proposition:optimizing_point_construction:Ralpha_u_Ralpha_v_optimizers} $x_{\alpha}$ and $x_{\alpha}'$ are the unique optimizers in the definition of $P^\alpha[u](y_{\alpha})$ and $P_\alpha[v](y_{\alpha}')$, respectively.
        \item \label{item:item:proposition:optimizing_point_construction:shift_optimal}
        We have that
        \begin{align*}
            u(x_{\alpha}) - P^\alpha[u] \circ s_{x_{\alpha}-y_{\alpha}}(x_{\alpha}) & = \ssup{ u - P^\alpha[u] \circ s_{x_{\alpha}-y_{\alpha}} }, \\
            v(x_{\alpha}') - P_\alpha[v] \circ s_{x_{\alpha}'-y_{\alpha}'}(x_{\alpha}') & = \iinf{ v - P_\alpha[v] \circ s_{x_{\alpha}'-y_{\alpha}'} }.
        \end{align*}
    \end{enumerate}
    
    \begin{description}
        \item[Behaviour as $\alpha \rightarrow \infty$]
    \end{description} 

    \begin{enumerate}[(a),resume]
        \item \label{item:proposition:optimizing_point_construction_0optimizers_convergence} We have $\lim_{\alpha \rightarrow \infty} \alpha d^2(y_{\alpha,0},y_{\alpha,0}') = 0$.
        \item \label{item:proposition:optimizing_point_construction_2optimizers_convergence} 
        We have 
        \begin{equation*}
            \lim_{\alpha \rightarrow \infty} \alpha  \left( d\left(x_{\alpha},y_{\alpha} \right) + d\left(y_{\alpha}, y_{\alpha}'\right) + d\left(y_{\alpha}',x_{\alpha}' \right) \right)^2  = 0.
        \end{equation*}
        \item \label{item:proposition:optimizing_point_construction_compact2} $x_{\alpha}, y_{\alpha}, y_{\alpha}', x_{\alpha}' \in K_{\varepsilon}$.
        \end{enumerate}
        In addition, the following estimate on $u-v$ holds: For any compact set $K \subseteq E$, there is a compact set $\widehat{K} = \widehat{K}(K,\varepsilon, u,v)$ given by 
        \begin{equation} \label{eqn:definition_hatK}
            \widehat{K} := \left\{z \in E \, \middle| \,  V(z) \leq \frac{\vn{u}+\vn{v}}{\varepsilon} + \ssup{V}_K \right\},
        \end{equation}
        such that
        \begin{enumerate}[(a),resume]
        \item \label{item:proposition:optimizing_point_construction_estimate_u-v}
        
        For any compact set $K \subseteq E$,
        \begin{equation*}
             \ssup{u-v}_K \leq \frac{1}{1-\eps}u(x_{\alpha}) - \frac{1}{1+\eps}v(x_{\alpha}) + \varepsilon\left(c_{\eps,\varphi} + o(1) \right),
        \end{equation*}
        where
        \begin{equation} \label{eqn:Cvarphi_construction}
            c_{\eps,\varphi}:= \frac{2}{1-\varepsilon^2} (1-\varphi) \ssup{V}_K - \iinf{\frac{1}{1-\varepsilon}u - \frac{1}{1+\varepsilon}v }_K,
        \end{equation}
         and $o(1)$ is in terms of $\alpha \rightarrow \infty$ for fixed $\varepsilon$ and $\varphi$.
        \item \label{item:proposition:optimizing_limits}
        Any limit point of the sequence $(x_{\alpha}, y_{\alpha},  y_{\alpha,0}, y_{\alpha,0}', y_{\alpha}', x_{\alpha}')$ as $\alpha \rightarrow \infty$ is of the form $(z,z,z,z,z,z)$ with $z \in \widehat{K}$. 
    \end{enumerate}    
\end{proposition}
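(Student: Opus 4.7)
The plan is to build the three pairs $(y_{\alpha,0},y_{\alpha,0}')$, $(y_\alpha, y_\alpha')$, $(x_\alpha, x_\alpha')$ in sequence, using three main tools: the bounded semi-continuity of the sup-/inf-convolutions $P^\alpha[u]$ and $P_\alpha[v]$, the compact sublevel sets of the containment function $V$, and the Jensen-type perturbation lemma Proposition~\ref{proposition:Jensen_Alexandrov_cutoff} applied in the joint variable on $E\times E$ to produce simultaneous twice-differentiability.

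For existence of $(y_{\alpha,0},y_{\alpha,0}')$ and of the compact set $K_{\varepsilon,0}$, note that since $u$ is bounded and upper semi-continuous, $P^\alpha[u]$ inherits these properties, and symmetrically $P_\alpha[v]$ is bounded and lower semi-continuous. Combined with the strictly positive weight on $V$, the functional $\Lambda_\alpha$ is upper semi-continuous with relatively compact upper level sets, so a maximizing pair $(y_{\alpha,0},y_{\alpha,0}')$ exists. Comparing $\Lambda_\alpha(y_{\alpha,0},y_{\alpha,0}')$ with its value at $(z_0,z_0)$ for a minimizer $z_0$ of $V$ converts $\vn{u}$ and $\vn{v}$ into an explicit a priori bound for $V(y_{\alpha,0}) + V(y_{\alpha,0}')$, producing $K_{\varepsilon,0}$.

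Next, I apply Proposition~\ref{proposition:Jensen_Alexandrov_cutoff} to the joint functional $\Lambda_\alpha$ on $E\times E$ at $(y_{\alpha,0},y_{\alpha,0}')$. The hypotheses are satisfied because $P^\alpha[u]$ is semi-convex, $P_\alpha[v]$ is semi-concave, and $V, \xi_z$ are semi-concave by Definitions~\ref{definition:perturbation_containment} and~\ref{definition:perturbation_first_second_order}. The lemma returns perturbation vectors $p_\alpha, p_\alpha' \in B_{1/\alpha}(0)$ and a new maximizing pair $(y_\alpha,y_\alpha')$ of~\eqref{eqn:proposition:optimizing_point_construction:optimizing_points}, at which $P^\alpha[u]$ and $P_\alpha[v]$ admit second-order Taylor expansions via Alexandrov's theorem. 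Adding the quadratic terms $\xi_{y_\alpha}, \xi_{y_\alpha'}$ appearing in $\Xi_1, \Xi_2$ then turns $(y_\alpha,y_\alpha')$ into the \emph{unique} maximizer of~\eqref{eqn:proposition:optimizing_point_construction:optimizing_points_unique}. The $1/\alpha$-displacement $d(y_\alpha, y_{\alpha,0}) \leq 1/\alpha$ and its primed version follow from $|p_\alpha|, |p_\alpha'| \leq 1/\alpha$ combined with the domination property in Definition~\ref{definition:perturbation_first_second_order}\ref{item:definition:penalization:domination}, which keeps the perturbed maximizer inside the linearity ball $B_R$ of $\zeta$.

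Finally, boundedness of $u, v$ together with the quadratic penalty $\tfrac{\alpha}{2} d^2$ give existence of the sup-/inf-convolution optimizers $x_\alpha, x_\alpha'$; uniqueness follows from the twice-differentiability established in the previous step, since two distinct maximizers would force a non-differentiable ridge in $P^\alpha[u]$ at $y_\alpha$. The translation identity in the shift property is immediate from the definitions via $d^2(x-(x_\alpha-y_\alpha),x) \equiv d^2(x_\alpha,y_\alpha)$. For the convergence statements, the standard doubling-of-variables argument -- comparing the optimum of $\Lambda_\alpha$ with its value at a diagonal point and using semi-continuity of $u, v$ to eliminate zeroth-order contributions in the limit -- forces $\alpha d^2(y_{\alpha,0},y_{\alpha,0}') \to 0$; combined with the $1/\alpha$-proximity from the previous step and the elementary estimate $\tfrac{\alpha}{2} d^2(x_\alpha,y_\alpha) \leq 2\vn{u}$, this yields the remaining convergence and compactness assertions, and forces all six sequences to share their limit points in $\widehat{K}$. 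The estimate on $\ssup{u-v}_K$ is obtained by testing the perturbed functional at $(z,z)$ for $z \in K$ attaining $\ssup{u-v}_K$ and rearranging; the same comparison bounds $V$ at the optimizers by $(\vn{u} + \vn{v})/\varepsilon + \ssup{V}_K$ up to $o(1)$, identifying $\widehat{K}$. The main obstacle throughout is the joint Jensen perturbation: one needs a single perturbation on $E\times E$ that simultaneously regularizes both $P^\alpha[u]$ at $y_\alpha$ and $P_\alpha[v]$ at $y_\alpha'$ while preserving a quantitative $1/\alpha$-displacement bound, which is precisely what Proposition~\ref{proposition:Jensen_Alexandrov_cutoff} is engineered to deliver, exploiting the linearity of $\zeta$ inside $B_R$ for a clean Alexandrov conclusion and the domination hypothesis to rule out escape to the cutoff region.
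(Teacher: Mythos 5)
Your overall architecture matches the paper's proof: existence of $(y_{\alpha,0},y_{\alpha,0}')$ from boundedness of the convolutions plus the compact sublevel sets of $V$, the joint Jensen perturbation (Proposition \ref{proposition:Jensen_Alexandrov_cutoff} applied on $E\times E$ with $\eta=1/\alpha$) to produce $(y_\alpha,y_\alpha')$, $p_\alpha,p_\alpha'$ and the twice-differentiability, uniqueness of $x_\alpha,x_\alpha'$ via differentiability of the convolutions (Lemma \ref{lemma:properties_supinf_convolution}\ref{item:properties_supinf_convolution:optimizers_differentiability}), the shift identity, and \ref{item:proposition:optimizing_point_construction_0optimizers_convergence} via the standard $\Lambda_{\alpha/2}$-versus-$\Lambda_\alpha$ comparison. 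Two small imprecisions: the bound $d(y_\alpha,y_{\alpha,0})\leq 1/\alpha$ comes from the $B_\eta$-localization conclusion of Proposition \ref{proposition:Jensen_Alexandrov_cutoff}, not from $|p_\alpha|\leq 1/\alpha$ together with domination; and the monotonicity of $P^\alpha[u]$, $-P_\alpha[v]$ in $\alpha$ (Lemma \ref{lemma:properties_supinf_convolution}\ref{item:properties_supinf_convolution:decreasing}) is an essential ingredient of the doubling comparison here, since the convolutions themselves depend on $\alpha$; you should state it.

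The genuine gap is in property \ref{item:proposition:optimizing_point_construction_2optimizers_convergence}. You claim it follows from \ref{item:proposition:optimizing_point_construction_0optimizers_convergence}, the $1/\alpha$-proximity, and the elementary estimate $\tfrac{\alpha}{2}d^2(x_\alpha,y_\alpha)\leq 2\vn{u}$. That estimate only yields $\alpha\, d^2(x_\alpha,y_\alpha)=O(1)$, whereas \ref{item:proposition:optimizing_point_construction_2optimizers_convergence} requires $\alpha\, d^2(x_\alpha,y_\alpha)\to 0$ (and likewise for the primed pair); the sharper bound $\tfrac{\alpha}{2}d^2(x_\alpha,y_\alpha)\leq u(x_\alpha)-u(y_\alpha)$ does not rescue this either, because $u$ is only upper semi-continuous, so $u(x_\alpha)-u(y_\alpha)$ need not vanish even when both points converge to the same limit. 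The missing step is to run the $\alpha/2$-comparison once more at the \emph{quadruplicated} level: expand $P^{\alpha/2}[u](y_\alpha)\geq u(x_\alpha)-\tfrac{\alpha}{4}d^2(x_\alpha,y_\alpha)$ and $P_{\alpha/2}[v](y_\alpha')\leq v(x_\alpha')+\tfrac{\alpha}{4}d^2(x_\alpha',y_\alpha')$ inside $\ssup{\Lambda_{\alpha/2}}$, which gives
\begin{equation}
    \frac{\alpha}{4}\left(\frac{d^2(x_\alpha,y_\alpha)}{1-\varepsilon}+d^2(y_\alpha,y_\alpha')+\frac{d^2(y_\alpha',x_\alpha')}{1+\varepsilon}\right)\leq \ssup{\Lambda_{\alpha/2}}-\ssup{\widehat{\Lambda}_\alpha}-\frac{\varepsilon\varphi}{1-\varepsilon}\Xi_1^0(y_\alpha)-\frac{\varepsilon\varphi}{1+\varepsilon}\Xi_2^0(y_\alpha'),
\end{equation}
and the right-hand side tends to $0$ because $\ssup{\Lambda_{\alpha/2}}$ and $\ssup{\widehat{\Lambda}_\alpha}$ share the same limit (by \ref{item:proposition:optimizing_point_construction_0optimizers_convergence} and the Jensen error bound of Corollary \ref{corollary:Jensen_optimizerbound}) and the $\Xi^0$-terms vanish. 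Without this variational step the convergence in \ref{item:proposition:optimizing_point_construction_2optimizers_convergence} — which is exactly what the controlled-growth and semi-monotonicity estimates in the main proof consume — is not established.
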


Figure \ref{fig:optimizers} visualizes the relation between the different optimizing points.

\begin{figure}[h]
    \centering
    \includegraphics[width=0.6\textwidth]{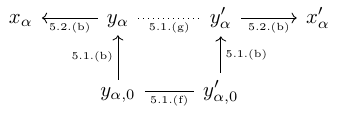}
    \caption{Relation between the optimizing points with a note which parts of the propositions give us distance control.}
    \label{fig:optimizers}
\end{figure}

The proof of Proposition \ref{proposition:optimizing_point_construction} uses various properties of sup- and inf-convolutions, which we gather in the next lemma. Its proof is relegated to Appendix \ref{appendix:proof_supconvolution_properties}.

\begin{lemma} \label{lemma:properties_supinf_convolution}
    Let $u: E \rightarrow \bR$ be bounded and upper semi-continuous and $v: E \rightarrow \bR$ be bounded and lower semi-continuous. For $\alpha > 1$, set
    \begin{align}
        P^\alpha[u](y) &\coloneqq \sup_{x \in E} \left\{u(x) - \frac{\alpha}{2} d^2(x,y) \right\} = \ssup{u - \frac{\alpha}{2} d^2 (\cdot, y)}, 
        \label{eqn:properties_supinf_convolution:defRalpha_u} \\
        P_\alpha[v](y) &\coloneqq \inf_{x \in E} \left\{v(x) + \frac{\alpha}{2}d^2(x,y) \right\} = \iinf{u + \frac{\alpha}{2} d^2 (\cdot, y)}. \label{eqn:properties_supinf_convolution:defRalpha_v}
    \end{align}
    Then,
    \begin{enumerate}[(a)] 
        \item \label{item:properties_supinf_convolution:bounded} we have $\vn{P^\alpha[u]} \leq \vn{u}$ and $\vn{P_\alpha[v]} \leq \vn{v}$.
        \item \label{item:properties_supinf_convolution:convergence_optimizers} for any $x,y \in E$ such that
        \begin{equation*}
            P^\alpha[u](y) = u(x) - \frac{\alpha}{2}d^2(x,y),
        \end{equation*}
        we have $\frac{\alpha}{2}d^2(x,y) \leq u(x) - u(y)$. Similarly, for any $x,y \in E$ with
        \begin{equation*}
            P_\alpha[v](y) = v(x) + \frac{\alpha}{2}d^2(x,y),
        \end{equation*} we have $\frac{\alpha}{2}d^2(x,y) \leq v(y) - v(x)$.
        \item \label{item:properties_supinf_convolution:decreasing}   $P^{\alpha}[u]$ and $- P_{\alpha}[v]$ are decreasing in $\alpha$.
        \item \label{item:properties_supinf_convolution:semi_convex} $P^\alpha[u]$ and $- P_\alpha[v]$ are semi-convex with semi-convexity constant $\alpha$. As a consequence, both are locally Lipschitz continuous.
        \item \label{item:properties_supinf_convolution:optimizers_differentiability} if $P^\alpha[u]$ is differentiable at $y_0$, then there exists a unique optimizer $x_0$ in \eqref{eqn:properties_supinf_convolution:defRalpha_u} such that
        \begin{equation*}
            P^\alpha[u](y_0) = u(x_0) - \frac{\alpha}{2}d^2(x_0,y_0)
        \end{equation*}
        and $D P^\alpha[u](y_0) = \alpha(x_0 - y_0)$.
        Similarly, if $P_\alpha[v]$ is differentiable at $y_0$, then there is a unique optimizer $x_0$ in \eqref{eqn:properties_supinf_convolution:defRalpha_v} such that
        \begin{equation*}
            P_\alpha[v](y_0) = v(x_0) + \frac{\alpha}{2}d^2(x_0,y_0)
        \end{equation*}
        and $D P_\alpha[v](y_0) = -\alpha(x_0 - y_0)$.
    \end{enumerate}
\end{lemma}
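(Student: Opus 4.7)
For \ref{item:properties_supinf_convolution:bounded}, I would observe that taking $x=y$ in \eqref{eqn:properties_supinf_convolution:defRalpha_u} yields $P^\alpha[u](y)\geq u(y)$, while the penalization term is non-negative so $P^\alpha[u](y)\leq \sup_x u(x)$; this gives $\vn{P^\alpha[u]}\leq \vn{u}$, and the inf-convolution case is symmetric. For \ref{item:properties_supinf_convolution:convergence_optimizers}, testing the sup in $P^\alpha[u](y)$ against $x=y$ gives $u(x)-\tfrac{\alpha}{2}d^2(x,y)=P^\alpha[u](y)\geq u(y)$, which rearranges to the claim; the inf-convolution case is analogous. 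For \ref{item:properties_supinf_convolution:decreasing}, if $\alpha_1\leq\alpha_2$ then $u(x)-\tfrac{\alpha_1}{2}d^2(x,y)\geq u(x)-\tfrac{\alpha_2}{2}d^2(x,y)$ pointwise in $x$, and taking the sup in $x$ preserves the inequality.

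For \ref{item:properties_supinf_convolution:semi_convex}, the standard trick is to complete the square:
\begin{equation*}
P^\alpha[u](y)+\frac{\alpha}{2}|y|^2 = \sup_{x\in E}\left\{u(x)-\frac{\alpha}{2}|x|^2 + \alpha\ip{x}{y}\right\},
\end{equation*}
which is a supremum of affine functions of $y$ and hence convex and lower semi-continuous. Therefore $P^\alpha[u]$ is semi-convex with constant $\alpha$, and being finite-valued (by \ref{item:properties_supinf_convolution:bounded}) and convex up to a smooth quadratic, it is locally Lipschitz. Analogously, $-P_\alpha[v](y)+\tfrac{\alpha}{2}|y|^2$ is a supremum of affine functions, giving semi-convexity of $-P_\alpha[v]$.

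For \ref{item:properties_supinf_convolution:optimizers_differentiability}, I would first establish existence of an optimizer. Since $u$ is bounded and upper semi-continuous and $x\mapsto-\tfrac{\alpha}{2}d^2(x,y_0)$ is continuous and tends to $-\infty$, combining with \ref{item:properties_supinf_convolution:convergence_optimizers} confines candidate optimizers to a bounded set; upper semi-continuity then yields existence of some $x_0$ with $P^\alpha[u](y_0)=u(x_0)-\tfrac{\alpha}{2}d^2(x_0,y_0)$. For the gradient identification, the map $\psi(y):=u(x_0)-\tfrac{\alpha}{2}d^2(x_0,y)$ satisfies $\psi(y_0)=P^\alpha[u](y_0)$ and $\psi(y)\leq P^\alpha[u](y)$ for all $y$. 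Thus $\psi$ is a smooth lower support for $P^\alpha[u]$ at $y_0$, so $\nabla\psi(y_0)=\alpha(x_0-y_0)$ lies in the subdifferential of the convex function $P^\alpha[u]+\tfrac{\alpha}{2}|\cdot|^2$ shifted back, i.e.\ it is a supergradient of $P^\alpha[u]$ at $y_0$. Differentiability of $P^\alpha[u]$ at $y_0$ forces this supergradient to be unique, yielding $DP^\alpha[u](y_0)=\alpha(x_0-y_0)$. Uniqueness of $x_0$ is immediate: any other optimizer $x_0'$ would also satisfy $DP^\alpha[u](y_0)=\alpha(x_0'-y_0)$, forcing $x_0'=x_0$. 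The inf-convolution statement follows by the same argument applied to $-v$.

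The only mildly subtle step is \ref{item:properties_supinf_convolution:optimizers_differentiability}, because one must argue cleanly that a classical derivative of a semi-convex function coincides with its unique supergradient; after reducing via the shift to the convex function $P^\alpha[u](\cdot)+\tfrac{\alpha}{2}|\cdot|^2$, this is standard convex analysis. All other parts are direct unwrappings of the definitions.
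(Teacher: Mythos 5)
Your proposal is correct, and for parts \ref{item:properties_supinf_convolution:bounded}--\ref{item:properties_supinf_convolution:semi_convex} it matches the paper's argument almost verbatim: the same $x=y$ test for the bounds and for the optimizer inequality, monotonicity of the penalization in $\alpha$, and the completion-of-the-square representation of $P^\alpha[u]+\frac{\alpha}{2}d^2(\cdot,y_0)$ as a supremum of affine functions (the paper centers at an arbitrary $y_0$, you center at the origin; this is immaterial since the two quadratics differ by an affine function). The genuine difference is in \ref{item:properties_supinf_convolution:optimizers_differentiability} and the Lipschitz statement of \ref{item:properties_supinf_convolution:semi_convex}: the paper simply cites Cannarsa--Sinestrari (Prop.\ 2.1.5, Thm.\ 2.1.7 and Thm.\ 3.4.4), whereas you give a self-contained proof --- existence of an optimizer via the coercivity of the penalization together with boundedness and upper semi-continuity of $u$, gradient identification by noting that $y\mapsto u(x_0)-\frac{\alpha}{2}d^2(x_0,y)$ becomes an affine minorant of the convex function $P^\alpha[u]+\frac{\alpha}{2}|\cdot|^2$ touching at $y_0$, hence $\alpha x_0$ belongs to its subdifferential, which is the singleton $\{D P^\alpha[u](y_0)+\alpha y_0\}$ at a point of differentiability, and uniqueness of $x_0$ as an immediate consequence. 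This buys independence from the external reference at the cost of a few lines; the paper buys brevity. One small terminological point: what you call a ``supergradient of $P^\alpha[u]$'' is really a subgradient of the shifted convex function $P^\alpha[u]+\frac{\alpha}{2}|\cdot|^2$ (equivalently a proximal subgradient of $P^\alpha[u]$), since your supporting function touches from below; your parenthetical description is the correct one, so the argument itself is unaffected.
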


\begin{proof}[Proof of Proposition \ref{proposition:optimizing_point_construction}]
    {\bfseries Proof of \ref{item:proposition:optimizing_point_construction_compact0}:}
    As $u$ and $v$ are bounded, by Lemma \ref{lemma:properties_supinf_convolution} \ref{item:properties_supinf_convolution:bounded}, the same holds for $\vn{P^\alpha[u]}$ and $\vn{P_\alpha[v]}$. Using that $V$ has compact sublevelsets, cf. Definition \ref{definition:perturbation_containment}, the existence of optimizers $(y_{\alpha,0},y_{\alpha,0}')$ for $\ssup{\Lambda_\alpha}$ follows.
    
    The definition of $\Lambda_\alpha$ and the convolutions $P^\alpha[u]$ and $P_\alpha[v]$ imply that
    \begin{equation} \label{eqn:upperbondV_via_Lambda_alpha}
        \frac{\varepsilon}{1-\varepsilon} (1-\varphi) V(y_{\alpha,0}) + \frac{\varepsilon}{1+\varepsilon} (1-\varphi) V(y_{\alpha,0}') \leq \frac{1}{1-\varepsilon} \ssup{u} - \frac{1}{1+\varepsilon}\iinf{v} - \ssup{\Lambda_\alpha}.
    \end{equation}
    
    Comparing the optimizers for $\Lambda_\alpha (y, y')$ to, e.g., the suboptimial choice $(y, y') = (\hat{y},\hat{y})$ satisfying $V(\hat{y}) = 0$, we find
    \begin{equation*}
        \frac{\varepsilon}{1-\varepsilon} (1-\varphi) V(y_{\alpha,0}) + \frac{\varepsilon}{1+\varepsilon} (1-\varphi) V(y_{\alpha,0}') \leq \frac{2}{1-\varepsilon} \vn{u} + \frac{2}{1+\varepsilon} \vn{v}.
    \end{equation*}
    From this estimate, we deduce that $(y_{\alpha,0},y_{\alpha,0}') \in K_{\varepsilon,0} \times K_{\varepsilon,0}$ with
    \begin{equation*}
        K_{\varepsilon,0} \coloneqq \left\{y \in E \, \middle| \, V(y) \leq \varepsilon^{-1} C_\varepsilon(\vn{u} + \vn{v}) \right\}
    \end{equation*}
    for some constant $C_\varepsilon > 0$ satisfying $\lim_{\varepsilon \downarrow 0} C_\varepsilon = \frac{2}{1-\varphi}$, establishing \ref{item:proposition:optimizing_point_construction_compact0}.   

    {\bfseries Proof of \ref{item:proposition:optimizing_point_Jensencontrol} and \ref{item:proposition:optimizing_point_construction_twice_diff}:}
    For the proof of these two statements, we first move from $\ssup{\Lambda_\alpha}$ to its perturbed version \eqref{eqn:proposition:optimizing_point_construction:optimizing_points}. To do so, we use Proposition \ref{proposition:Jensen_Alexandrov_cutoff}. Note, that the function $(y,y') \mapsto \Lambda_\alpha(y,y')$ of \eqref{eqn:optimizing_points_Lambda} over which we optimize in $\ssup{\Lambda_\alpha}$ is semi-convex with semi-convexity constant 
    \begin{equation*}
        \kappa = \left(\frac{2}{1-\varepsilon^2} + \frac{1}{2}\right) \alpha + \frac{2\varepsilon}{1-\varepsilon^2} (1-\varphi) \kappa_V > 1
    \end{equation*}
    for $\alpha > 1$. In addition, it is bounded from above and has optimizers $(y_{\alpha,0},y'_{\alpha,0})$. We can thus apply Proposition \ref{proposition:Jensen_Alexandrov_cutoff} with 
    \begin{equation} \label{eqn:optimizer_construct_1}
        \eta = \frac{1}{\alpha}, \qquad \varepsilon_1 = \frac{\varepsilon}{1-\varepsilon} \varphi, \qquad \varepsilon_2 = \frac{\varepsilon}{1+\varepsilon} \varphi.
    \end{equation}

    Consequently, it follows that there exist $p_{\alpha},p_{\alpha}' \in B_{1/\alpha}(0)$ such that  $y_{\alpha},y_{\alpha}'$ are optimizers of
    \begin{equation}\label{eqn:constructionOptimizersHatPhi}
        \ssup{\widehat{\Lambda}_\alpha} = \widehat{\Lambda}_\alpha(y_{\alpha},y_{\alpha}'),
    \end{equation}
    where
    \begin{equation} \label{eqn:constructionOptimizersHatPhi2}
    \widehat{\Lambda}_\alpha(y,y') := \Lambda_\alpha(y,y')  - \frac{\varepsilon}{1-\varepsilon} \varphi \Xi_1^0(y) - \frac{\varepsilon}{1+\varepsilon} \varphi \Xi_2^0(y')
    \end{equation}
    with $\Xi^0_1$ and $\Xi^0_2$ as defined above.
    This establishes \eqref{eqn:proposition:optimizing_point_construction:optimizing_points}. An additional penalization around $(y_{\alpha},y_{\alpha}')$ gives \eqref{eqn:proposition:optimizing_point_construction:optimizing_points_unique}. 
    A secondary outcome of Proposition \ref{proposition:Jensen_Alexandrov_cutoff} is that $\widehat{\Lambda}_\alpha$ is twice differentiable in the optimizing point $(y_{\alpha}, y_{\alpha}')$, establishing \ref{item:proposition:optimizing_point_construction_twice_diff}. Furthermore, the optimizers satisfy
    \begin{equation} \label{eqn:optimizer_construct_2}
    d(y_{\alpha},y_{\alpha,0}) < \eta, \qquad d(y_{\alpha}',y_{\alpha,0}') < \eta,
    \end{equation}
    which, together with \eqref{eqn:optimizer_construct_1}, yields
    \begin{equation}\label{eqn:optimizer_construct_3}
        \max \left\{ d(y_{\alpha},y_{\alpha,0}), d(y_{\alpha}',y_{\alpha,0})'\right\} \leq \frac{1}{\alpha},
    \end{equation}
    establishing \ref{item:proposition:optimizing_point_Jensencontrol}. 
    
    {\bfseries Proof of \ref{item:proposition:optimizing_point_construction:Ralpha_u_Ralpha_v_optimizers}:}
    This follows immediately from Lemma \ref{lemma:properties_supinf_convolution} \ref{item:properties_supinf_convolution:optimizers_differentiability}. 

    {\bfseries Proof of \ref{item:item:proposition:optimizing_point_construction:shift_optimal}:}
    We only establish
    \begin{equation*}
        u(x_{\alpha}) - P^\alpha[u] \circ s_{x_{\alpha}-y_{\alpha}}(x_{\alpha}) = \ssup{ u - P^\alpha[u] \circ s_{x_{\alpha}-y_{\alpha}}},
    \end{equation*}
    as the second equation follows similarly. Note that by definition of $P^\alpha[u]$, we have
    \begin{equation*}
        P^\alpha[u] \circ s_{x_{\alpha}-y_{\alpha}}(x) \geq u(x) - \frac{\alpha}{2}d^2\big(x, s_{x_{\alpha} - y_{\alpha}}(x)\big).
    \end{equation*}
    On the other hand, by \ref{item:proposition:optimizing_point_construction:Ralpha_u_Ralpha_v_optimizers}, we have
    \begin{equation*}
        P^\alpha[u] \circ s_{x_{\alpha}-y_{\alpha}}(x_{\alpha}) = P^\alpha[u](y_{\alpha}) = u(x_{\alpha}) - \frac{\alpha}{2}d^2\left(x_{\alpha}, y_{\alpha}\right).
    \end{equation*}
    Combining the two statements yields, for any $x \in E$, that
    \begin{align*}
        & u(x_{\alpha}) - P^\alpha[u] \circ s_{x_{\alpha}-y_{\alpha}}(x_{\alpha}) \\
        & \qquad = \frac{\alpha}{2} d^2\left(x_{\alpha}, y_{\alpha} \right) + P^\alpha[u] \circ s_{x_{\alpha}-y_{\alpha}}(x) - P^\alpha[u] \circ s_{x_{\alpha}-y_{\alpha}}(x) \\
        & \qquad \geq u(x) - P^\alpha[u] \circ s_{x_{\alpha}-y_{\alpha}}(x) + \frac{\alpha}{2}\left(d^2\left(x_{\alpha}, y_{\alpha} \right) - d^2\left(x, s_{x_{\alpha} - y_{\alpha}}(x)\right)\right) \\
        & \qquad =  u(x) - P^\alpha[u] \circ s_{x_{\alpha}-y_{\alpha}}(x) 
    \end{align*}
    as the shift map preserves distances. This establishes \ref{item:item:proposition:optimizing_point_construction:shift_optimal}.

    \smallskip

    For the proof of the final five properties, we consider the limit $\alpha \rightarrow \infty$.
    
    \smallskip
    
    {\bfseries Proof of \ref{item:proposition:optimizing_point_construction_0optimizers_convergence}:}
    Consider $ \ssup{\Lambda_\alpha}$:
    \begin{align*}
        \ssup{\Lambda_\alpha} & = \frac{1}{1-\varepsilon} P^\alpha[u](y_{\alpha,0}) - \frac{1}{1+\varepsilon} P_\alpha[v](y'_{\alpha,0}) - \frac{\alpha}{2}d^2(y_{\alpha,0},y'_{\alpha,0}) \\
        & \qquad - \frac{\varepsilon}{1-\varepsilon} (1-\varphi) V(y_{\alpha,0}) - \frac{\varepsilon}{1+\varepsilon} (1-\varphi) V(y_{\alpha,0}').
    \end{align*}
    Note, that $\ssup{\Lambda_\alpha}$ is decreasing in $\alpha$, since $-\frac{\alpha}{2} d^2(y_{\alpha,0},y'_{\alpha,0})$, $P^{\alpha}[u]$, and $- P_\alpha [v]$ are decreasing in $\alpha$ by Lemma \ref{lemma:properties_supinf_convolution} \ref{item:properties_supinf_convolution:decreasing}. 
    Note in addition that, by evaluating $\Lambda_\alpha$ in the particular choice $(y,y') = (\hat{y}, \hat{y})$ as above, we have, by Lemma \ref{lemma:properties_supinf_convolution} \ref{item:properties_supinf_convolution:bounded}, that
    \begin{equation*}
        \ssup{\Lambda_\alpha}  \geq \frac{1}{1-\varepsilon} P^\alpha[u](\hat{y}) - \frac{1}{1+\varepsilon} P_\alpha[v](\hat{y}) \geq \frac{1}{1-\varepsilon}\vn{u} - \frac{1}{1+\varepsilon} \vn{v},
    \end{equation*}
    which is lower bounded uniformly in $\alpha$. It follows that the limit $\lim_{\alpha \to \infty} \sup \Lambda_\alpha $ exists. For any $\alpha > 1$, we find
    \begin{align}
         \ssup{\Lambda_{\alpha/2}}  &\geq \frac{1}{1-\varepsilon} P^{\alpha/2}[u](y_{\alpha,0}) - \frac{1}{1+\varepsilon} P_{\alpha/2}[v](y'_{\alpha,0}) - \frac{\alpha}{4}d^2(y_{\alpha,0},y'_{\alpha,0}) \nonumber \\
        &\qquad - \frac{\varepsilon}{1-\varepsilon}(1-\varphi)V(y_{\alpha,0}) - \frac{\varepsilon}{1+\varepsilon} (1-\varphi) V(y_{\alpha,0}') \nonumber \\
        & \geq  \frac{1}{1-\varepsilon} P^\alpha[u](y_{\alpha,0}) - \frac{1}{1+\varepsilon} P_\alpha[v](y'_{\alpha,0}) - \frac{\alpha}{4}d^2(y_{\alpha,0},y'_{\alpha,0})\nonumber \\
        & \qquad - \frac{\varepsilon}{1-\varepsilon}(1-\varphi)V(y_{\alpha,0}) - \frac{\varepsilon}{1+\varepsilon} (1-\varphi) V(y_{\alpha,0}') \\
        &\geq \ssup{\Lambda_\alpha} + \frac{\alpha}{4}d^{2}(y_{\alpha,0},y_{\alpha,0}'),
        \label{eqn:Malpha_bound}
    \end{align}
    which implies that $\lim_{\alpha \to \infty}\alpha d^2(y_{\alpha,0},y_{\alpha,0}')=0$, as $\ssup{\Lambda_\alpha}$ and $\ssup{\Lambda_{\alpha/2}}$ converge to the same limit, establishing \ref{item:proposition:optimizing_point_construction_0optimizers_convergence}.
    
    {\bfseries Proof of \ref{item:proposition:optimizing_point_construction_2optimizers_convergence}:}
    We follow the same approach as in \eqref{eqn:Malpha_bound} but now expanding $P^\alpha[u](y_{\alpha})$ and $P_\alpha[v](y_{\alpha}')$ to obtain an optimization problem in terms of four variables.
    \begin{equation}
        \begin{aligned}
            \ssup{\Lambda_{\alpha/2}} &\geq \frac{1}{1-\varepsilon} P^{\alpha/2}[u](y_{\alpha}) - \frac{1}{1+\varepsilon} P_{\alpha/2}[v](y'_{\alpha}) - \frac{\alpha}{4}d^2(y_{\alpha},y'_{\alpha}) \\
            & \geq \frac{1}{1-\varepsilon} u(x_{\alpha}) - \frac{1}{1+\eps} v(x_{\alpha}') - \frac{\varepsilon}{1-\varepsilon}(1-\varphi)V(y_{\alpha}) - \frac{\varepsilon}{1+\varepsilon}(1-\varphi) V(y_{\alpha}')\\
            & \qquad - \frac{\alpha}{4}\left(\frac{1}{1-\eps}d^2(x_{\alpha},y_{\alpha})  + d^2(y_{\alpha},y'_{\alpha}) + \frac{1}{1+\eps} d^2(y_{\alpha}',x'_{\alpha}) \right) \\
            &  = \ssup{\widehat{\Lambda}_{\alpha}} + \frac{\alpha}{4} \left(\frac{1}{1-\varepsilon}d^2(x_{\alpha},y_{\alpha}) + d^2(y_{\alpha},y_{\alpha}') + \frac{1}{1+\varepsilon} d^2(y_{\alpha}',x_{\alpha}') \right) \\
            & \qquad + \frac{\eps}{1-\eps} \varphi \Xi_1^0(y_{\alpha}) + \frac{\eps}{1+\eps} \varphi \Xi_2^0(y'_{\alpha})
        \end{aligned}
    \end{equation}
    by \eqref{eqn:constructionOptimizersHatPhi2}. It follows that 
    \begin{multline}
        \frac{\alpha}{4} \left(\frac{1}{1-\varepsilon}d^2(x_{\alpha},y_{\alpha}) + d^2(y_{\alpha},y_{\alpha}') + \frac{1}{1+\varepsilon} d^2(y_{\alpha}',x_{\alpha}') \right)\\ 
        \leq \ssup{\Lambda_{\alpha/2}} - \ssup{\widehat{\Lambda}_{\alpha}} - \frac{\eps}{1-\eps} \varphi \Xi_1^0(y_{\alpha}) - \frac{\eps}{1+\eps} \varphi \Xi_2^0(y'_{\alpha}).
    \end{multline}

    By \ref{item:proposition:optimizing_point_construction_0optimizers_convergence}, we obtain
    \begin{equation} %\label{eqn:convergence_LambdaLambdahat_alpha}
        \lim_{\alpha \rightarrow \infty} \ssup{\Lambda_\alpha} = \lim_{\alpha \rightarrow \infty} \ssup{\widehat{\Lambda}_\alpha}
    \end{equation}
    and 
    \begin{equation} %\label{eqn:convergence_perturb_Xi_in_alpha_to_0}
        \lim_{\alpha \rightarrow \infty} \frac{\varepsilon}{1-\varepsilon} \varphi \Xi_{1}^0(y_{\alpha}) + \frac{\varepsilon}{1+\varepsilon} \varphi \Xi_{2}^0(y_{\alpha}') = 0.
    \end{equation}
    Consequently, we have that
    \begin{equation*}
        \lim_{\alpha \rightarrow \infty} \alpha \left(d^2(x_{\alpha},y_{\alpha}) + d^2(y_{\alpha},y_{\alpha}') + d^2(y_{\alpha}',x_{\alpha}') \right) = 0.
    \end{equation*}
    From this, \ref{item:proposition:optimizing_point_construction_2optimizers_convergence} follows using Young's inequality.

    {\bfseries Proof of \ref{item:proposition:optimizing_point_construction_compact2}:}
    \ref{item:proposition:optimizing_point_construction_compact0}, \ref{item:proposition:optimizing_point_Jensencontrol}, \ref{item:proposition:optimizing_point_construction_0optimizers_convergence}, and \ref{item:proposition:optimizing_point_construction_2optimizers_convergence} imply \ref{item:proposition:optimizing_point_construction_compact2} by considering a bounded blow-up $K_{\varepsilon}$ of $K_{\varepsilon,0}$.

    {\bfseries Proof of \ref{item:proposition:optimizing_point_construction_estimate_u-v}: }
    First, note that Corollary \ref{corollary:Jensen_optimizerbound} and the definition of $\eta$ in \eqref{eqn:optimizer_construct_1} yield
    \begin{equation}\label{eqn:Xi_optimizer_nonnegative}
    \begin{split}
    0 \leq - \frac{\varepsilon}{1-\varepsilon} \varphi \Xi_1^0(y_{\alpha})  - \frac{\varepsilon}{1+\varepsilon} \varphi \Xi_2^0(y_{\alpha}') \leq \varphi \frac{2\varepsilon}{1-\varepsilon^2} \frac{1}{\alpha}
    \end{split}
    \end{equation}
    and
    \begin{equation} \label{eqn:Jensen_control_optimizationproblem_inOptimizerConstruction}
        \ssup{\Lambda_\alpha} \leq \ssup{\widehat{\Lambda}_\alpha}  = \widehat{\Lambda}_\alpha(y_{\alpha},y_{\alpha}') 
        \leq \ssup{\Lambda_\alpha} + \varphi\frac{2\eps}{1-\eps^2} \frac{1}{\alpha}.
    \end{equation}
    Let $K \subseteq E$ be compact. We then obtain
    \begin{align}
         \ssup{u-v}_K &= \sup_{x \in K} u(x) - v(x) \notag \\
        & \leq \sup_{x \in K} \left\{ u(x) - v(x) - \frac{2\varepsilon}{1-\varepsilon^2} (1-\varphi) \left(V(x) - \ssup{V}_K\right) \right\} \notag \\
        & \leq \sup_{x \in K} \left\{ \frac{1}{1-\varepsilon} u(x) - \frac{1}{1+\varepsilon} v(x) - \frac{2\varepsilon}{1-\varepsilon^2} (1-\varphi) V(x)  \right\} \\ &\qquad + \frac{2\varepsilon}{1-\varepsilon^2} (1-\varphi) \ssup{V}_K - \varepsilon \iinf{\frac{1}{1-\varepsilon}u - \frac{1}{1+\varepsilon}v }_K  \notag \\
        & \leq \sup_{x \in E} \left\{ \frac{1}{1-\varepsilon} u(x) - \frac{1}{1+\varepsilon} v(x) - \frac{2\varepsilon}{1-\varepsilon^2} (1-\varphi) V(x)  \right\} \\
        &\qquad + \frac{2\varepsilon}{1-\varepsilon^2} (1-\varphi) \ssup{V}_K - \varepsilon \iinf{\frac{1}{1-\varepsilon}u - \frac{1}{1+\varepsilon}v }_K \notag \\
        & \leq \ssup{\Lambda_\alpha} + \frac{2\varepsilon}{1-\varepsilon^2} (1-\varphi) \ssup{V}_K - \varepsilon \iinf{\frac{1}{1-\varepsilon}u - \frac{1}{1+\varepsilon}v }_K. \label{eqn:upperbound_u-v_onK}
    \end{align}
    Combining this estimate with the first inequality of \eqref{eqn:Jensen_control_optimizationproblem_inOptimizerConstruction}, dropping non-positive terms, and then \eqref{eqn:Xi_optimizer_nonnegative}, leads to
    \begin{align*}
         \ssup{u-v}_K &  \leq \widehat{\Lambda}_\alpha(y_{\alpha},y_{\alpha}') \\
         & \leq \frac{1}{1-\varepsilon}u(x_{\alpha}) -  \frac{1}{1+\varepsilon}v(x_{\alpha}') \\
         & \qquad + \varepsilon\left( \varphi \frac{2}{1-\varepsilon^2} \frac{1}{\alpha}  + \frac{2}{1-\varepsilon^2} (1-\varphi) \ssup{V}_K - \iinf{\frac{1}{1-\varepsilon}u - \frac{1}{1+\varepsilon}v }_K \right),
    \end{align*}
    which proves \ref{item:proposition:optimizing_point_construction_estimate_u-v}.
    
    {\bfseries Proof of \ref{item:proposition:optimizing_limits}:}
    We start by proving that any limiting point of 
    \begin{equation*}(x_{\alpha},y_{\alpha},y_{\alpha,0},y_{\alpha,0}',y_{\alpha}',x_{\alpha}')
    \end{equation*}
    as $\alpha \rightarrow \infty$ is of the form $(z,z,z,z,z,z)$. We only prove $\lim_{\alpha \rightarrow \infty} d(x_{\alpha},y_{\alpha}) = 0$, as the other limit follow analogously. 
    
    By  \ref{item:proposition:optimizing_point_construction_compact2}, we find that, along subsequences, $(x_{\alpha},y_{\alpha}) \rightarrow (x_0,y_0)$. Assume by contradiction that $x_0 \neq y_0$. Then, since $\alpha d^2$ is increasing, we get that for all $\alpha_0 >1$,
    \begin{equation}
        \liminf_{\alpha \to \infty} \alpha d^2(x_{\alpha}, y_{\alpha}) \geq \alpha_0 d^2(x_0,y_0).
    \end{equation}
    We can conclude that $\alpha d^2(x_{\alpha},y_{\alpha}) \to \infty$, contradicting \ref{item:proposition:optimizing_point_construction_2optimizers_convergence}.
    
    We proceed to prove that any limiting point $z$ lies in $\widehat{K}$. Similar to \eqref{eqn:upperbondV_via_Lambda_alpha}, but now also using  \eqref{eqn:Xi_optimizer_nonnegative} and the first inequality of \eqref{eqn:Jensen_control_optimizationproblem_inOptimizerConstruction}, we find
    \begin{equation} \label{eqn:upperbondV_via_Lambda_alphahat}
        \begin{aligned}
            & \frac{\varepsilon}{1-\varepsilon} (1-\varphi) V(y_{\alpha}) + \frac{\varepsilon}{1+\varepsilon} (1-\varphi) V(y_{\alpha}') \\
            & \qquad \leq \frac{1}{1-\varepsilon} \ssup{u} - \frac{1}{1+\varepsilon}\iinf{v}  - \frac{\varepsilon}{1-\varepsilon} \varphi \Xi_{1}^0(y_{\alpha}) - \frac{\varepsilon}{1+\varepsilon} \varphi \Xi_{2}^0(y_{\alpha}') - \ssup{\widehat{\Lambda}_\alpha}, \\
            & \qquad \leq \frac{1}{1-\varepsilon} \ssup{u} - \frac{1}{1+\varepsilon}\iinf{v} + \varphi \frac{2\varepsilon}{1-\varepsilon^2} \frac{1}{\alpha} - \ssup{\Lambda_\alpha},
        \end{aligned}
    \end{equation}
    Combining this with the upper bound on $-\ssup{\Lambda_\alpha}$ obtained from \eqref{eqn:upperbound_u-v_onK} leads to
    \begin{align*}
        & \frac{\varepsilon}{1-\varepsilon} (1-\varphi) V(y_{\alpha}) + \frac{\varepsilon}{1+\varepsilon} (1-\varphi) V(y_{\alpha}') \\
        & \qquad \leq \frac{1}{1-\varepsilon} \ssup{u} + \frac{1}{1+\varepsilon} \iinf{v} + \varphi \frac{2\varepsilon}{1-\varepsilon^2} \frac{1}{\alpha}\\
        & \qquad \qquad - \ssup{u-v}_K + \frac{2\varepsilon}{1-\varepsilon^2} (1-\varphi) \ssup{V}_K - \varepsilon \iinf{\frac{1}{1-\varepsilon}u - \frac{1}{1+\varepsilon}v }_K.
    \end{align*}
    This, in turn, yields
    \begin{align*}
         & \frac{\varepsilon}{1-\varepsilon} (1-\varphi) \left(V(y_{\alpha}) - \ssup{V}_K \right) + \frac{\varepsilon}{1+\varepsilon} (1-\varphi) \left( V(y_{\alpha}') - \ssup{V}_K \right) \\
        & \qquad \leq \frac{1}{1-\varepsilon} \ssup{u} + \frac{1}{1+\varepsilon} \iinf{v} + \varphi\frac{2\varepsilon}{1-\varepsilon^2} \frac{1}{\alpha} \\
        & \qquad \qquad - \ssup{u-v}_K + \varepsilon \iinf{\frac{1}{1-\varepsilon}u - \frac{1}{1+\varepsilon}v }_K\\
        & \qquad \leq 2\left(\vn{u} +  \vn{v}\right) + \varphi \frac{\varepsilon}{1-\varepsilon^2} \frac{1}{\alpha}.
    \end{align*}
    The sequences $x_\alpha, y_{\alpha},y_{\alpha}', x_\alpha'$ have limit points $z \in K_\varepsilon$ as $\alpha\rightarrow \infty$ by \ref{item:proposition:optimizing_point_construction_2optimizers_convergence} and \ref{item:proposition:optimizing_point_construction_compact2}. In combination with \ref{item:proposition:optimizing_point_Jensencontrol}, we conclude that, for any such limiting point $z$,
    \begin{equation*}
        \frac{2\varepsilon}{1-\varepsilon^2} (1-\varphi) \left(V(z) - \ssup{V}_{K}\right) \leq 2\left(\vn{u} +  \vn{v}\right),
    \end{equation*}
    establishing \ref{item:proposition:optimizing_limits}.
\end{proof}

The next proposition builds upon Proposition \ref{proposition:optimizing_point_construction}  to build a suitable collection of test functions for the use in the proof of the comparison principle. The sup- and inf-convolution $R^\alpha[u]$ and $R_\alpha[v]$ are not guaranteed to be smooth. However, they are twice differentiable in the relevant optimizing points.

Using the difference between $\Xi_1^0$ and $\Xi_2^0$ on one hand and $\Xi_1$ and $\Xi_2$ on the other, we are able to squeeze in a globally $C^\infty$ function on the basis of Lemma \ref{lemma:smooth_test_function_construction}, that can be used to replace $P^\alpha[u]$ and $P_\alpha[v]$. As an effect, we will approximate 
\begin{align*}
    \widehat{f}_\dagger & \approx P^\alpha[u], & f_\dagger & \approx P^\alpha[u] \circ s_{x_{\alpha} - y_{\alpha}}, \\
    \widehat{f}_\ddagger & \approx P_\alpha[v], & f_\ddagger & \approx P_\alpha[v] \circ s_{x_{\alpha}' - y_{\alpha}'}, 
\end{align*}
which will be made rigorously in next proposition for fixed $\varepsilon$ and $\alpha$. 

\begin{proposition}[Test function construction] \label{proposition:test_function_construction}
    Consider the setting of Proposition \ref{proposition:optimizing_point_construction}. Fix $\varepsilon \in (0,1)$, $\varphi \in (0, 1]$, and $\alpha >1$. Then, there are functions $f_1,f_2, \widehat{f}_1,\widehat{f}_2 \in C_c^\infty(E)$ such that
    \begin{equation*}
        f_1 = \widehat{f}_1 \circ s_{x_{\alpha}-y_{\alpha}}, \qquad f_2 = \widehat{f}_2 \circ s_{x_{\alpha}'-y_{\alpha}'}
    \end{equation*}
    and
      \begin{align*}
        \widehat{f}_\dagger & := (1-\varepsilon)\widehat{f}_1 + \varepsilon (1-\varphi) V + \varepsilon \varphi \Xi_1,  & f_\dagger = \widehat{f}_\dagger \circ s_{x_{\alpha}-y_{\alpha}},  \\ 
        \widehat{f}_\ddagger & := (1+\varepsilon)\widehat{f}_2 - \varepsilon (1-\varphi) V - \varepsilon \varphi \Xi_2, & f_\ddagger  = \widehat{f}_\ddagger \circ s_{x_{\alpha}'-y_{\alpha}'},
    \end{align*}
    satisfying the following properties: 
    
    For $\widehat{f}_1,\widehat{f}_2$ and $f_1,f_2$, we have
    \begin{enumerate}[(a)]
        \item \label{item:test_function_construction:optimizing_f1f2} The pair $(y_{\alpha},y_{\alpha}')$ is the unique optimizing pair of
        \begin{equation*}
            \widehat{f}_1(y_{\alpha}) - \widehat{f}_2(y_{\alpha}') - \frac{\alpha}{2}d^2(y_{\alpha},y_{\alpha}') = \ssup{ \widehat{f}_1 -\widehat{f}_2 - \frac{\alpha}{2}d^2 }.
        \end{equation*}
        and the pair $(x_{\alpha},x_{\alpha}')$ is the unique optimizing pair of
        \begin{equation}
            f_{1}(x_{\alpha}) - f_{2}(x_{\alpha}') - \frac{\alpha}{2}d^2_{x_{\alpha}-y_{\alpha},\; x_{\alpha}'-y_{\alpha}'}(x_{\alpha},x_{\alpha}')
            = \ssup{f_{1} - f_{2} -\frac{\alpha}{2}d^2_{x_{\alpha}-y_{\alpha},\; x_{\alpha}'-y_{\alpha}'} }.
        \end{equation}   
    \end{enumerate}
    For $\widehat{f}_\dagger,\widehat{f}_\ddagger$, and $f_\dagger,f_\ddagger$ we have
    \begin{enumerate}[(a),resume]
        \item \label{item:test_function_construction:f1f2_squeeze_Ralpha_u_Ralpha_v} We have
        \begin{equation} \label{eqn:bounds_for_Ralpha}
            \begin{aligned}
                P^\alpha[u](y) & \leq \widehat{f}_{\dagger}(y), \\
                P_\alpha[v](y') & \geq \widehat{f}_\ddagger(y')
            \end{aligned}
        \end{equation}
        with equality in $y_{\alpha}$ and $y_{\alpha}'$, respectively.
       \item \label{item:test_function_construction:fdagger_f_ddagger_test_functions_uv} We have that $x_{\alpha}, x'_{\alpha}$ are the unique points such that
        \begin{align*}
            u(x_{\alpha}) - f_\dagger(x_{\alpha}) & = \ssup{ u - f_\dagger }, \\
            v(x_{\alpha}') - f_\ddagger(x_{\alpha}') & = \ssup{ v - f_\ddagger }.
        \end{align*}
        % The gradients and second moments align
        \item \label{item:test_function_construction:f1f2_equal_Ralpha_u_Ralpha_v_derivatives} We have
        \begin{equation}\label{eqn:bounds_for_Ralpha_derivatives}
            \begin{aligned} 
                D \widehat{f}_\dagger(y_{\alpha}) & = D f_\dagger (x_{\alpha}) =  \alpha(y_{\alpha}-x_{\alpha}), \\
                D^2 \widehat{f}_\dagger(y_{\alpha}) & = D^2 f_\dagger(x_{\alpha}), \\
                D \widehat{f}_\ddagger(y_{\alpha}') & = D f_\ddagger (x_{\alpha}')  = \alpha(x_{\alpha}'-y_{\alpha}'), \\
                D^2 \widehat{f}_\ddagger(y_{\alpha}') & = D^2f_\ddagger(x_{\alpha}').
            \end{aligned}
        \end{equation}
    \end{enumerate}
\end{proposition}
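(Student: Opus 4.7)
The strategy is to smooth the sup- and inf-convolutions $P^\alpha[u]$ and $P_\alpha[v]$ from Proposition \ref{proposition:optimizing_point_construction} into $C_c^\infty$ test functions while preserving both the optimization structure and the twice-differentiability information at $y_\alpha$ and $y_\alpha'$.

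First, I would extract a $C^\infty$ global majorant of $P^\alpha[u] - \varepsilon(1-\varphi) V - \varepsilon\varphi\, \Xi_1$. Since $(y_\alpha, y_\alpha')$ is the unique optimizer of \eqref{eqn:proposition:optimizing_point_construction:optimizing_points_unique}, fixing $y' = y_\alpha'$ and rearranging yields, for every $y \in E$,
\begin{equation*}
P^\alpha[u](y) - \varepsilon(1-\varphi) V(y) - \varepsilon\varphi\, \Xi_1(y) \leq R_1(y),
\end{equation*}
where $R_1$ is the $C^\infty$ (in fact, quadratic) function
\begin{equation*}
R_1(y) := P^\alpha[u](y_\alpha) - \varepsilon(1-\varphi) V(y_\alpha) - \varepsilon\varphi\, \Xi_1(y_\alpha) + \tfrac{(1-\varepsilon)\alpha}{2}\bigl(d^2(y,y_\alpha') - d^2(y_\alpha, y_\alpha')\bigr),
\end{equation*}
with equality exactly at $y = y_\alpha$. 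A symmetric argument for $y_\alpha'$ gives a $C^\infty$ minorant of $P_\alpha[v] + \varepsilon(1-\varphi) V + \varepsilon\varphi\, \Xi_2$ touching at $y_\alpha'$. I would then invoke Lemma \ref{lemma:smooth_test_function_construction} to squeeze a genuinely $C^\infty$ function between the smooth majorant $R_1/(1-\varepsilon)$ and the semi-convex target $(P^\alpha[u] - \varepsilon(1-\varphi) V - \varepsilon\varphi\, \Xi_1)/(1-\varepsilon)$, obtaining $\widehat{f}_1$ that matches the target to second order at $y_\alpha$; this matching is possible because $P^\alpha[u]$ is twice differentiable there (Proposition \ref{proposition:optimizing_point_construction}\ref{item:proposition:optimizing_point_construction_twice_diff}). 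Compact support for $\widehat{f}_1$ is then imposed via a cut-off, which preserves the global inequality because $P^\alpha[u]$ is bounded (Lemma \ref{lemma:properties_supinf_convolution}\ref{item:properties_supinf_convolution:bounded}) while $V$ and $\xi_{y_\alpha}$ are coercive, forcing the target to tend to $-\infty$ at infinity. The construction of $\widehat{f}_2$ is analogous.

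The four properties then follow directly. Property (b) is immediate by construction. Property (d) follows from the touching equality at $y_\alpha$ combined with the chain rule for the shift $s_{x_\alpha - y_\alpha}$ and Lemma \ref{lemma:properties_supinf_convolution}\ref{item:properties_supinf_convolution:optimizers_differentiability}. For (a), the uniqueness of $(y_\alpha, y_\alpha')$ as optimizer of \eqref{eqn:proposition:optimizing_point_construction:optimizing_points_unique} transfers to uniqueness of the $\widehat{f}_1 - \widehat{f}_2 - \tfrac{\alpha}{2} d^2$ problem through the touching identities $\widehat{f}_\dagger(y_\alpha) = P^\alpha[u](y_\alpha)$ and $\widehat{f}_\ddagger(y_\alpha') = P_\alpha[v](y_\alpha')$, and the shift then transfers this to $(x_\alpha, x_\alpha')$ via property \ref{item:item:proposition:optimizing_point_construction:shift_optimal}; property (c) is obtained by the same token. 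The principal obstacle is the combined smoothing-and-truncation step: constructing a globally $C_c^\infty$ function that simultaneously serves as a global majorant of a merely semi-convex function and matches it to second order at a specified point. This is exactly the content of Lemma \ref{lemma:smooth_test_function_construction}, and the coercivity provided by the containment and penalization functions is crucial for reconciling compact support with the global inequality \eqref{eqn:bounds_for_Ralpha}.
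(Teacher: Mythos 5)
Your lower squeeze is the same as the paper's and correctly delivers properties (b), (c), (d): you bound $\widehat{f}_1$ from below by $\Pi_1 := \frac{1}{1-\varepsilon}P^\alpha[u] - \frac{\varepsilon}{1-\varepsilon}(1-\varphi)V - \frac{\varepsilon}{1-\varepsilon}\varphi\,\Xi_1$, rearrange to get \eqref{eqn:bounds_for_Ralpha}, and combine with Proposition \ref{proposition:optimizing_point_construction}\ref{item:item:proposition:optimizing_point_construction:shift_optimal} and the twice differentiability at $y_\alpha$. The genuine gap is in property \ref{item:test_function_construction:optimizing_f1f2}, and it comes from your choice of \emph{upper} envelope. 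Your majorant $R_1$ is obtained by freezing $y'=y_\alpha'$, so it only controls a one-dimensional slice of the quadruplication problem. When you try to transfer unique optimality to the joint functional $\widehat{f}_1(y)-\widehat{f}_2(y')-\frac{\alpha}{2}d^2(y,y')$, the best upper bound your construction yields is $\frac{R_1(y)}{1-\varepsilon}-\frac{R_2(y')}{1+\varepsilon}-\frac{\alpha}{2}d^2(y,y')$, and a direct computation shows
\begin{equation*}
\frac{\alpha}{2}d^2(y,y_\alpha')+\frac{\alpha}{2}d^2(y_\alpha,y')-\frac{\alpha}{2}d^2(y,y')-\frac{\alpha}{2}d^2(y_\alpha,y_\alpha') = \alpha\,\ip{y-y_\alpha}{y'-y_\alpha'},
\end{equation*}
which is sign-indefinite and unbounded above. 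So the touching identities alone do not transfer uniqueness; the bound is vacuous off the slice. The paper avoids this by squeezing $\mathfrak{f}_1$ between $\Pi_1$ and $\Pi_1^0$ (the version with $\Xi_1^0$ instead of $\Xi_1$), because $\ssup{\Pi_1^0\ominus\Pi_2^0-\frac{\alpha}{2}d^2}$ is \emph{exactly} the two-variable Jensen-perturbed problem \eqref{eqn:proposition:optimizing_point_construction:optimizing_points}, known to be optimized at $(y_\alpha,y_\alpha')$, and the strict inequalities $\mathfrak{f}_1<\Pi_1^0$, $\mathfrak{f}_2>\Pi_2^0$ away from the touching points then force uniqueness. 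You need your upper envelope to be $\Pi_1^0$, not a slice-wise tangent quadratic.

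A second, smaller gap is the truncation step. A lower cut-off only raises $\widehat{f}_1$, so it trivially preserves the majorant inequality \eqref{eqn:bounds_for_Ralpha}; the danger is that raising $\widehat{f}_1$ (and lowering $\widehat{f}_2$) in the truncation region creates new maximizers of the joint problem $\ssup{\widehat{f}_1-\widehat{f}_2-\frac{\alpha}{2}d^2}$. This is why the paper chooses the truncation levels $M_1,M_2$ in terms of $\frac{\alpha}{2}d^2(y_\alpha,y_\alpha')$, $\iinf{\mathfrak{f}_2}$ and $\ssup{\mathfrak{f}_1}$ and runs a three-case analysis ($y\notin A_1$, $y'\notin A_2$, both) to show the truncated values stay strictly below the optimum. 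Coercivity of $V$ and $\xi$ justifies that the level sets are compact, but it does not by itself rule out new optimizers of the coupled functional; that argument needs to be supplied.
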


As noted before the previous proposition, we aim to construct $\widehat{f}_\dagger \approx P^\alpha[u]$, but start out by first constructing $\widehat{f}_1 \in C_c^\infty(E)$, which, by re-arrangement, satisfies
\begin{equation*}
    \widehat{f}_1 \approx \frac{1}{1-\varepsilon} P^\alpha[u](y) - \frac{\varepsilon}{1-\varepsilon} (1-\varphi) V(y)  - \frac{\varepsilon}{1-\varepsilon} \varphi \Xi_1(y)
\end{equation*}
and is constant outside of a compact set. As $V$ has compact sublevel sets and other terms on the right-hand side are bounded from above, it suffices to first perform a smooth approximation and cut off the result. For the cut-off procedure, we use functions $\Omega_M^+$ and $\Omega_M^-$.

\begin{definition}[Cut-off functions]\label{def:cutoff_function}
    Let $M >0$. We call a smooth increasing function $\Omega_M^+: \bR \rightarrow \bR$ a \emph{upper cut-off function at $M$}, if
    \begin{equation*}
        \Omega^+_M(r) = \begin{cases}
            r & \text{if } r \leq M, \\
            M+1 & \text{if } r \geq M+2.
        \end{cases}
    \end{equation*}
    We call $\Omega^-_M$ a \emph{lower cut-off function} at $M$ if $\Omega^-_M(r) = - \Omega^+_{-M}(-r)$.
\end{definition}

\begin{proof}[Proof of Proposition \ref{proposition:test_function_construction}]

    In this proof, we work in the context of Proposition \ref{proposition:optimizing_point_construction} and will, correspondingly, follow its notation.
    We show the construction procedure for the test function $f_1$ used in the subsolution case only, as $f_2$ is constructed analogously.
    Denote
    \begin{align}
        \Pi_1^0(y) & \coloneqq  \frac{1}{1-\varepsilon}P^\alpha[u](y)  - \frac{\varepsilon}{1-\varepsilon} (1-\varphi) V(y) - \frac{\varepsilon}{1-\varepsilon} \varphi \Xi_1^0(y), \\
        \Pi_{1}(y) & \coloneqq  \frac{1}{1-\varepsilon}P^\alpha[u](y)  - \frac{\varepsilon}{1-\varepsilon} (1-\varphi) V(y) - \frac{\varepsilon}{1-\varepsilon} \varphi \Xi_1(y).
    \end{align}
    Note that we have $\Pi_{1}(y_\alpha)=\Pi_1^0(y_\alpha)$ and $\Pi_{1}(y)<\Pi_1^0(y)$ for all $y \in E \setminus \{y_\alpha\}$. By Lemma \ref{lemma:smooth_test_function_construction}, we find a function $\mathfrak{f}_1 \in C^\infty(E)$ such that
    \begin{equation*}
        \Pi_{1}(y) < \mathfrak{f}_1(y) <\Pi_1^0(y), \qquad y \neq y_\alpha.
    \end{equation*}
    The function $\mathfrak{f}_2$ is constructed analogously. By construction of $\mathfrak{f}_1$, $\mathfrak{f}_2$ and \eqref{eqn:proposition:optimizing_point_construction:optimizing_points}, $(y_\alpha, y_\alpha')$ is the unique optimizer of $\ssup{\mathfrak{f}_1 - \mathfrak{f}_2 - \frac{\alpha}{2} d^2}$.

    As our test functions need to be constant outside a compact set, we need to cut them off in an appropriate manner. However, we need to preserve their properties in the optimizer $(y_\alpha, y'_\alpha)$.\ Taking these conditions into account, ensures that the cut-off procedure does not create new optimizers. 

    The above considerations lead to the cut-off procedure $\widehat{f}_1 \coloneqq \Omega^-_{M_1} \circ \mathfrak{f}_1$ and $\widehat{f}_2 \coloneqq \Omega^+_{M_2} \circ \mathfrak{f}_2$, with $\Omega^-_{M_1}$, $\Omega^+_{M_2}$ as in Definition \ref{def:cutoff_function} and the following choice of $M_1$ and $M_2$:

    Pick $m_1, m_2 \in \bR$ such that the level sets
    \begin{equation*}
        \left\{y \in E \, \middle| \, \mathfrak{f}_1(y) \geq m_1\right\}, \qquad \left\{y' \in E \, \middle| \, \mathfrak{f}_2(y') \leq m_1\right\}
    \end{equation*}
    are compact. Set
    \begin{align*}
        M_1  &\coloneqq \min \left\{m_1,\, \mathfrak{f}_1(y_\alpha)  - \left(\mathfrak{f}_2(y_\alpha') - \iinf{\mathfrak{f}_2} \right) - \frac{\alpha}{2}d^2(y_\alpha,y_\alpha')  \right\}, \\
        M_2  &\coloneqq \max \left\{m_2,\, \mathfrak{f}_2(y_\alpha')  + \left(\ssup{\mathfrak{f}_1} - \mathfrak{f}_1(y_\alpha) \right) + \frac{\alpha}{2}d^2(y_\alpha,y_\alpha')\right\}.
    \end{align*}

    Using $M_1$ and $M_2$ as defined above, we find that $(y_\alpha, y_\alpha')$ is the unique optimizer of $\ssup{ \widehat{f}_1 -\widehat{f}_2 - \frac{\alpha}{2}d^2 }$.
    To see this, denote $$A_1 \coloneqq \left\{y \in E \, \middle| \, \mathfrak{f}_1(y) \geq M_1 \right\}\quad \text{and} \quad A_2 \coloneqq \left\{y' \in E \, \middle| \, \mathfrak{f}_2(y') \leq M_2 \right\}.$$ 
    Thus, for $i \in \{1,2\}$, we find $\widehat{f}_i = \mathfrak{f}_i$ on $A_i$, whereas
    \begin{align*}
        \widehat{f}_1(y) & <  \mathfrak{f}_1(y_\alpha) - \left(\mathfrak{f}_2(y_\alpha') - \iinf{\mathfrak{f}_2}\right) - \frac{\alpha}{2}d^2(y_\alpha,y_\alpha'), \\
        \widehat{f}_2(y') & > \mathfrak{f}_2(y_\alpha')  + \left(\ssup{\mathfrak{f}_1} - \mathfrak{f}_1(y_\alpha) \right) + \frac{\alpha}{2}d^2(y_\alpha,y_\alpha')
    \end{align*}
    if $y \notin A_1$ or $y' \notin A_2$, respectively. 
    
    As $\widehat{f}_1 = \mathfrak{f}_1$ on $A_1$ and $\widehat{f}_2 = \mathfrak{f}_2$ on $A_2$, it suffices to show that
    \begin{equation*}
        \widehat{f}_1(y) - \widehat{f}_2(y') - \frac{\alpha}{2}d^2(y,y') < \mathfrak{f}_1(y_\alpha) - \mathfrak{f}_2(y_\alpha') - \frac{\alpha}{2}d^2(y_\alpha,y_\alpha')
    \end{equation*}
    if $y \in A_1^c$ or $y \in A_2^c$. For the proof of this bound, we consider the following three separate cases.
    
    \textit{Case $y \in A_1^c$ and $y' \in A_2$:}
    We have
    \begin{align*}
        \widehat{f}_1(y) - \widehat{f}_2(y') - \frac{\alpha}{2}d^2(y,y') & \leq \widehat{f}_1(y) - \widehat{f}_2(y') \\
        & < \mathfrak{f}_1(y_\alpha)  - \left(\mathfrak{f}_2(y_\alpha') - \iinf{\mathfrak{f}_2}\right) - \frac{\alpha}{2} d^2(y_\alpha,y_\alpha') -  \mathfrak{f}_2(y')  \\
        & = \mathfrak{f}_1(y_\alpha)  - \mathfrak{f}_2(y_\alpha') - \frac{\alpha}{2} d^2(y_\alpha,y_\alpha') - \left(\mathfrak{f}_2(y') - \iinf{\mathfrak{f}_2}\right)  \\
        & \leq \mathfrak{f}_1(y_\alpha)  - \mathfrak{f}_2(y_\alpha') - \frac{\alpha}{2} d^2(y_\alpha,y_\alpha').
    \end{align*}
    
    \textit{Case $y \in A_1$ and $y' \in A_2^c$:} Follows analogously to the case $y \in A_1^c$ and $y' \in A_2$.
    
    \textit{Case $y \in A_1^c$ and $y' \in A_2^c$:}
    We have
    \begin{align*}
         \widehat{f}_1(y) - \widehat{f}_2(y') & - \frac{\alpha}{2}d^2(y,y') \\
        & \leq \widehat{f}_1(y) - \widehat{f}_2(y') \\
        & < \mathfrak{f}_1(y_\alpha)  - \left(\mathfrak{f}_2(y_\alpha') - \iinf{\mathfrak{f}_2}\right) - \frac{\alpha}{2} d^2(y_\alpha,y_\alpha')   \\
        & \hspace{3cm} - \left( \mathfrak{f}_2(y_\alpha')  + \left(\ssup{\mathfrak{f}_1} - \mathfrak{f}_1(y_\alpha) \right) + \frac{\alpha}{2}d^2(y_\alpha,y_\alpha') \right) \\
        & \leq \mathfrak{f}_1(y_\alpha) - \mathfrak{f}_2(y_\alpha') - 2 \frac{\alpha}{2} d^2(y_\alpha,y_\alpha')  - \left(\mathfrak{f}_2(y_\alpha') - \iinf{\mathfrak{f}_2}\right) - \left(\ssup{\mathfrak{f}_1} - \mathfrak{f}_1(y_\alpha) \right)  \\
        & \leq \mathfrak{f}_1(y_\alpha) - \mathfrak{f}_2(y_\alpha') - \frac{\alpha}{2} d^2(y_\alpha,y_\alpha').
    \end{align*}
    We conclude that the pair $(y_\alpha, y_\alpha')$ is also the unique optimizer of $\ssup{\widehat{f}_1 - \widehat{f}_2 - \frac{\alpha}{2} d^2}$. Applying the shift maps $s_{x_\alpha - y_\alpha}$ and $s_{x'_\alpha - y'_\alpha}$, respectively, we find that $(x_\alpha, x'_\alpha)$ uniquely optimize $\ssup{\mathfrak{f}_1 \circ s_{x_\alpha - y_\alpha} - \mathfrak{f}_2 \circ s_{x'_\alpha - y'_\alpha} - \frac{\alpha}{2} d^2_{x_\alpha - y_\alpha, x'_\alpha - y'_\alpha}}$.
    Additionally, as $M_1 \geq m_1$ and $M_2 \leq m_2$, we have $\widehat{f}_1, \widehat{f}_2 \in C_c^\infty(E)$, establishing \ref{item:test_function_construction:optimizing_f1f2}.
    
    \smallskip

    We next prove \ref{item:test_function_construction:f1f2_squeeze_Ralpha_u_Ralpha_v}. As $r \leq \Omega_{M_1}^-(r)$,
    \begin{equation*}
          \frac{1}{1-\varepsilon}P^\alpha[u](y)  - \frac{\varepsilon}{1-\varepsilon} (1-\varphi) V(y) -\frac{\varepsilon}{1-\varepsilon} \varphi \Xi_1(y) = \Pi_1(y) \leq \Omega_{M_1}^- \circ \Pi_1(y) \leq \widehat{f}_1(y),
    \end{equation*}
    which, after rearrangement of terms, implies \ref{item:test_function_construction:f1f2_squeeze_Ralpha_u_Ralpha_v}.

\smallskip

We proceed with the  proof of \ref{item:test_function_construction:fdagger_f_ddagger_test_functions_uv}. By \ref{item:test_function_construction:f1f2_squeeze_Ralpha_u_Ralpha_v} and Proposition \ref{proposition:optimizing_point_construction} \ref{item:item:proposition:optimizing_point_construction:shift_optimal},
\begin{equation} \label{eqn:proof_test_function_inequalitysub1}
    \begin{aligned}
    f_\dagger(x) - f_\dagger(x_{\alpha}) & = \widehat{f}_\dagger\circ s_{x_{\alpha}-y_{\alpha}}(x) - \widehat{f}_\dagger\circ s_{x_{\alpha}-y_{\alpha}}(x_{\alpha}) \\
    & \geq \left(P^\alpha[u] \circ s_{x_{\alpha}-y_{\alpha}}\right)(x) - \left(P^\alpha[u] \circ s_{x_{\alpha}-y_{\alpha}}\right) (x_{\alpha}) \\
    & \geq \left(u(x) - \frac{\alpha}{2}d^2(x,s_{x_{\alpha}-y_{\alpha}}(x)\right) - \left(u(x_{\alpha}) - \frac{\alpha}{2}d^2(x_{\alpha},s_{x_{\alpha}-y_{\alpha}}(x_{\alpha})\right) \\
    & = u(x) - u(x_{\alpha})
    \end{aligned}
    \end{equation}
    with equality uniquely realized at $x_{\alpha}$, establishing \ref{item:test_function_construction:fdagger_f_ddagger_test_functions_uv}.

\smallskip

We conclude with the proof of \ref{item:test_function_construction:f1f2_equal_Ralpha_u_Ralpha_v_derivatives}. First of all, note that the equality of first and second order derivatives for $f_\dagger$ and $\widehat{f}_\dagger$ as well as for $f_\ddagger$ and $\widehat{f}_\ddagger$ follows by the chain rule.

The expressions for $D \widehat{f}_\dagger(y_{\alpha})$ and $D \widehat{f}_\ddagger(y_{\alpha}')$ follow from \ref{item:test_function_construction:f1f2_squeeze_Ralpha_u_Ralpha_v} and Proposition \ref{proposition:optimizing_point_construction} \ref{item:proposition:optimizing_point_construction_twice_diff} and \ref{item:proposition:optimizing_point_construction:Ralpha_u_Ralpha_v_optimizers}.
\end{proof}

\section{Proof of the strict comparison principle}\label{section:main_proof}

In this section, we prove Theorem \ref{th:comparison_HJI}. The proof is based on a variant of the variable quadruplication procedure on the basis of
\begin{multline} \label{eqn:quadruplicationvariables}
    \sup_{x \in E} \frac{1}{1-\varepsilon} u(x) - \frac{1}{1+\varepsilon} v(x) \\
    \leq \sup_{x,,y,y'x' \in E} \frac{1}{1-\varepsilon} u(x) - \frac{1}{1+\varepsilon} v(x') - \frac{\alpha}{2(1-\eps)}d^2(x,y) - \frac{\alpha}{2} d^2(y,y')  \\
     -  \frac{\alpha}{2(1+\eps)}d^2(y',x') - \frac{\varepsilon}{1+\varepsilon}V(x) - \frac{\varepsilon}{1+\varepsilon}V(x'),
\end{multline}
which we have formalized in terms of test functions $f_\dagger,f_\ddagger$ in Propositions \ref{proposition:optimizing_point_construction} and \ref{proposition:test_function_construction}. 

In a first step, we relate sub- and supersolutions for the Hamilton--Jacobi equation for $H$ to those for $H_+$ and $H_-$: This will be carried out in Lemma \ref{lemma:operator_pushover}. A second step is to show that $f_\dagger \in \cD(H_+)$ and $f_\ddagger \in \cD(H_-)$: This will be carried out in Lemma \ref{lemma:test_functions_in_domain}.

After establishing these technical points, we proceed to frame the comparison principle in terms of an estimate on
\begin{equation} \label{eqn:motivation_diff_Hamiltonians}
    \frac{H_+ f_\dagger}{1-\varepsilon}  - \frac{H_- f_\ddagger}{1+\varepsilon}.
\end{equation}
This reduction will be carried out in Proposition \ref{proposition:basic_comparison_using_Hestimate}, the statement of which is more involved than typically in the literature, but leads to the improved \textit{strict} comparison principle. Its formulation and proof hinges on the use of $V$ as a Lyapunov function.

The statements of Lemmas \ref{lemma:operator_pushover}, \ref{lemma:test_functions_in_domain}, and Proposition \ref{proposition:basic_comparison_using_Hestimate} can be found in Section \ref{subsection:proof_comparison_toDiffH}, their proofs in Section \ref{subsection:proof_comparison_toDiffH_proof}.

We finish in Section \ref{subsection:proof_comparison_mainProof} by estimating \eqref{eqn:motivation_diff_Hamiltonians} in two steps leading to our final result. We first establish in Lemma \ref{lemma:decomposition_A,B} that the pre-factors $(1-\varepsilon)^{-1}$ and $(1+\varepsilon)^{-1}$ work well with the combinations of functions that define $f_\dagger,f_\ddagger$ in Proposition \ref{proposition:test_function_construction}. We conclude this section with the proof of Theorem \ref{th:comparison_HJI}, where we use this split, the coupling assumption on $\bA$, the semi-monotonicity of $\bB$, modulus of continuity control on $\cI$ and, again, that $V$ is a Lyapunov function to arrive at our final result.

\subsection{Comparison in terms of estimating the difference of  Hamiltonians} \label{subsection:proof_comparison_toDiffH}

We start with connecting the notion of sub- and supersolutions for $H$ to those for $H_+$ and $H_-$, respectively.
\begin{lemma} \label{lemma:operator_pushover}
    Let $H$ and $\bH$ satisfy Assumption \ref{assumption:domain_setup}. Then, for any $h \in C_b(E)$ and $\lambda >0$, we have the following:
    \begin{enumerate}[(a)]
        \item Any viscosity subsolution of $f - \lambda Hf = h$ is also a viscosity subsolution of $f - \lambda H_+ f = h$.
        \item Any viscosity supersolution of $f - \lambda Hf = h$ is also a viscosity supersolution of $f - \lambda H_- f = h$.
        \end{enumerate}
\end{lemma}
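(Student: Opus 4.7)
My plan is to deduce the $H_+$-subsolution property for $u$ from the $H$-subsolution property by approximating each test function $f_\dagger\in\cD(H_+)$ from below by elements of $\cD(H)\subseteq C_b(E)$, using the upward sequential density Assumption \ref{assumption:domain_setup}\ref{item:assumption_domain_setup:upward_dense}. Part (b) will follow by a symmetric argument based on the downward sequential density Assumption \ref{assumption:domain_setup}\ref{item:assumption_domain_setup:downward_dense} and the inf-form of the maximum principle.

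For (a), I would fix $(f_\dagger,g_\dagger)\in H_+$ and set $M := \sup_{x\in E}(u(x)-f_\dagger(x))$. Since $u$ is bounded upper semi-continuous and $f_\dagger\in C_+(E)$ has compact sub-level sets, $M$ is attained. I would then pick a level $a > \vn{u}-M$ and construct $\bar f := f_{\dagger,a}\in\cD(H)$ via Definition \ref{def:order_dense}, so that $\bar f = f_\dagger$ on $\{f_\dagger\le a\}$ while $a < \bar f \le f_\dagger$ elsewhere. The key elementary observation is that $\sup_{x\in E}(u-\bar f) = M$ and that every sufficiently good near-maximizer of $u-\bar f$ must lie in the compact set $\{f_\dagger\le a\}$ on which $\bar f$ coincides with $f_\dagger$: outside this set one has $u - \bar f < \vn{u} - a < M$.

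Invoking the $H$-subsolution property of $u$ at the test function $\bar f$ then produces an approximating sequence $(x_k)_k$; the confinement just noted, together with the upper semi-continuity of $u$, will yield a subsequential limit $x^\ast\in\{f_\dagger\le a\}$ with $u(x^\ast)-f_\dagger(x^\ast)=M$ and $u(x^\ast)-\lambda\bH\bar f(x^\ast)-h(x^\ast)\le 0$. The main conceptual step, and the one I expect to be the primary obstacle, is transferring this last inequality from $\bH\bar f$ to $g_\dagger = \bH f_\dagger$. For this I would apply the maximum principle for $\bH$ from Assumption \ref{assumption:domain_setup}\ref{item:assumption_domain_setup:max_principle} to the nonnegative difference $f_\dagger - \bar f$, which attains its global infimum $0$ precisely on $\{f_\dagger\le a\}\ni x^\ast$. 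This gives $g_\dagger(x^\ast)\ge \bH\bar f(x^\ast)$, and hence $u(x^\ast) - \lambda g_\dagger(x^\ast) - h(x^\ast)\le 0$; the constant sequence at $x^\ast$ then verifies the $H_+$-subsolution criterion (alternatively, one may invoke Lemma \ref{lemma:def_equiv}).

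Part (b) is the mirror image: for $(f_\ddagger,g_\ddagger)\in H_-$, the infimum of $v - f_\ddagger$ is attained since $v$ is bounded lower semi-continuous and $f_\ddagger\in C_-(E)$ has compact super-level sets; downward sequential density yields approximations $\bar f\in\cD(H)$ with $\bar f\ge f_\ddagger$ and equality on a super-level set that, for a cut-off larger than $\vn{v}+\inf(v-f_\ddagger)$, contains all near-minimizers of $v-\bar f$; the $H$-supersolution property of $v$ combined with the inf-form of the maximum principle applied to $\bar f - f_\ddagger\ge 0$ then produces the required inequality at a single confined minimizer.
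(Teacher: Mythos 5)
Your proposal is correct and follows essentially the same route as the paper's proof: approximate the test function from below via upward sequential density, confine the (near-)maximizers of $u-\bar f$ to the compact sublevel set where $\bar f$ agrees with $f_\dagger$, invoke the $H$-subsolution property there, and transfer the resulting inequality back to $g_\dagger$ by applying the maximum principle to the difference $f_\dagger-\bar f$, which is extremal at the optimizer. The only cosmetic differences are that the paper fixes the maximizer $x_0$ first and takes the level $a=\ssup{u}-M$ exactly, and that in your sketch of part (b) the cut-off must be chosen sufficiently \emph{negative} (below $-\left(\vn{v}+\iinf{v-f_\ddagger}\right)$) so that the super-level set $\{f_\ddagger\geq a\}$ captures all near-minimizers.
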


The proof follows in Section \ref{subsection:proof_comparison_toDiffH_proof} below. In the next lemma we show that the test functions that we constructed in the previous section are in the domain of $H_+$ and $H_-$.

\begin{lemma} \label{lemma:test_functions_in_domain}
    Let $\bH$ be an operator satisfying Assumptions \ref{assumption:domain_setup} and \ref{assumption:compatibility}. 
    Let $\widehat{f_\dagger}, f_\dagger$ and $\widehat{f_\ddagger}, f_\ddagger$ be as in Proposition \ref{proposition:test_function_construction}.
    Then, $\widehat{f_\dagger}, f_\dagger \in \cD(H_+)$ and $\widehat{f_\ddagger}, f_\ddagger \in \cD(H_-)$.
\end{lemma}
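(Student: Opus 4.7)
The plan is to verify for $\widehat{f}_\dagger$ and $f_\dagger$ the two defining conditions of $\cD(H_+)$ — membership in $C_+(E)$ and in $\cD(\bH)$ — and to note that the argument for $\widehat{f}_\ddagger, f_\ddagger \in \cD(H_-)$ is entirely symmetric (the roles of sub- and super-level sets, and of the maximum and minimum principle, swap). I therefore focus on the subsolution side.

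Coercivity comes directly from the decomposition $\widehat{f}_\dagger = (1-\varepsilon)\widehat{f}_1 + \varepsilon(1-\varphi) V + \varepsilon\varphi \Xi_1$ of Proposition \ref{proposition:test_function_construction}. Here $\widehat{f}_1 \in C_c^\infty(E)$ is bounded, $V \geq 0$ has compact sub-level sets, and $\Xi_1 = \xi_{y_{\alpha,0}} + \zeta_{y_{\alpha,0},p_\alpha} + \xi_{y_\alpha}$ is bounded below: $\xi_{y_\alpha} \geq 0$ by Definition \ref{definition:perturbation_first_second_order}\ref{item:definition:penalization:second_order}, and $\xi_{y_{\alpha,0}} + \zeta_{y_{\alpha,0},p_\alpha}$ is bounded below by the domination property \ref{item:definition:penalization:domination} outside $B_R(y_{\alpha,0})$ and by continuity inside its compact closure — the constraint $|p_\alpha| \leq 1/\alpha \leq 1$ from Proposition \ref{proposition:optimizing_point_construction} being exactly what is needed to invoke \ref{item:definition:penalization:domination}. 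Thus $\widehat{f}_\dagger \geq c_0 + \varepsilon(1-\varphi) V$ for some constant $c_0$, so the compactness of the sub-level sets of $V$ is inherited by $\widehat{f}_\dagger$; the distance-preserving shift $s_{x_\alpha - y_\alpha}$ then transports this to $f_\dagger \in C_+(E)$.

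For $\widehat{f}_\dagger \in \cD(\bH)$ I combine three ingredients. First, Assumption \ref{assumption:domain_setup}\ref{item:assumption_domain_setup:domainH} gives $\widehat{f}_1 \in C_c^\infty(E) \subseteq \cD(H)$, so $\widehat{f}_1$ lies in the common domain of every $\bA_{\theta_1,\theta_2}$ and $\bB_{\theta_1,\theta_2}$; second, the domain clauses of Assumption \ref{assumption:compatibility} applied with shift $z=0$ put $V$ and $\Xi_1$ in the same common domain; third, linearity of each $\bA_{\theta_1,\theta_2}$ and $\bB_{\theta_1,\theta_2}$ on its (linear) domain distributes over the decomposition, and each resulting summand of the form $(\bA_{\theta_1,\theta_2}+\bB_{\theta_1,\theta_2}) g$ is jointly continuous in $(x,\theta_1,\theta_2)$ by the continuity clauses \ref{item:compatA:cont}, \ref{item:compatA:contf}, \ref{item:compatB:cont}, and \ref{item:compatB:contf}. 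Combined with the semi-continuity of $\cI$ (Theorem \ref{th:comparison_HJI}\ref{item:assumption_directHJI_lsc}) and compactness of $\Theta_1, \Theta_2$, a standard semi-continuity argument for minimax expressions then yields continuity of $x \mapsto \bH \widehat{f}_\dagger(x)$, i.e.\ $\widehat{f}_\dagger \in \cD(\bH)$. For $f_\dagger = \widehat{f}_\dagger \circ s_{x_\alpha - y_\alpha}$ the same proof applies with the nonzero shift $z = x_\alpha - y_\alpha$; this is the one place where the restriction $z \in \overline{B_1(0)}$ imposed in Assumption \ref{assumption:compatibility} becomes visible, and by Proposition \ref{proposition:optimizing_point_construction}\ref{item:proposition:optimizing_point_construction_2optimizers_convergence} we have $d(x_\alpha, y_\alpha) \to 0$, so the shift automatically lies in $\overline{B_1(0)}$ for all $\alpha$ large enough — the range in which the lemma is actually used. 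The only genuine technical concern is verifying that the sup-inf over the parameter space preserves $x$-continuity; everything else is bookkeeping via linearity and the compatibility data.
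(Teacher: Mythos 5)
Your verification that $\widehat{f}_\dagger \in C_+(E)$ (via the lower bound on $\Xi_1$ from Definition \ref{definition:perturbation_first_second_order} \ref{item:definition:penalization:domination} and the coercivity of $V$) is correct and in fact more explicit than what the paper writes. The problem is with the second half. The paper's proof is a three-line appeal to the structural domain hypotheses that were introduced precisely for this lemma: $\widehat{f}_1 \in C_c^\infty(E) \subseteq \cD(H)$ by Assumption \ref{assumption:domain_setup} \ref{item:assumption_domain_setup:domainH}, then $V \circ s_z, \Xi \circ s_z \in \cD(\bH)$ by compatibility, then $(1-\varphi)V\circ s_z + \varphi \Xi\circ s_z \in \cD(H_+)$ by convexity of $\cD(H_+)$ (Assumption \ref{assumption:domain_setup} \ref{item:domain_setup_extended_convex}), and finally $\widehat{f}_\dagger = (1-\varepsilon)\widehat{f}_1 + \varepsilon\bigl[(1-\varphi)V + \varphi\Xi_1\bigr] \in \cD(H_+)$ by the affine-combination property \ref{item:domain_setup_extended_affine}. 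You never invoke \ref{item:domain_setup_extended_convex} or \ref{item:domain_setup_extended_affine}; instead you try to establish $\widehat{f}_\dagger \in \cD(\bH)$ by checking that the Isaacs formula applied to $\widehat{f}_\dagger$ produces a continuous function. That is a genuine gap for two reasons. First, $\bH$ is an abstract relation $\bH \subseteq C(E)\times C(E)$ and $\cD(H_+)$ is defined through its graph; continuity of the formula does not by itself place a function in that graph unless you additionally assume $\cD(\bH)$ is the maximal domain of the formula, which the paper does not do — the domain hypotheses (e) and (f) are exactly the substitute for such an identification. Second, even granting that identification, your key analytic step — that the $\sup_{\theta_1}\inf_{\theta_2}$ of the jointly continuous summands plus the merely semi-continuous cost $\cI$ is continuous in $x$ — is the hard part, and you explicitly defer it as a ``technical concern'' rather than proving it. So the proof as written does not close.

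Two smaller remarks. The shift issue you raise ($z = x_\alpha - y_\alpha$ versus the restriction $z \in \overline{B_1(0)}$ in Assumption \ref{assumption:compatibility}) is a real subtlety that the paper glosses over as well, and your resolution via Proposition \ref{proposition:optimizing_point_construction} \ref{item:proposition:optimizing_point_construction_2optimizers_convergence} for large $\alpha$ is reasonable; but note the lemma is stated for every fixed $\alpha>1$, and the convolution bound of Lemma \ref{lemma:properties_supinf_convolution} \ref{item:properties_supinf_convolution:convergence_optimizers} only gives $d(x_\alpha,y_\alpha) \leq 2\sqrt{\vn{u}/\alpha}$, which need not be $\leq 1$ for all $\alpha>1$. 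Finally, for the convexity step one should observe (as the paper implicitly does) that it is the \emph{combination} $(1-\varphi)V + \varphi\Xi_1$ that has compact sub-level sets — $\Xi_1$ alone need not (e.g.\ Collection 2 of Definition \ref{definition:penalizations} is eventually constant) — so the decomposition has to be grouped before applying \ref{item:domain_setup_extended_affine}.
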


The proof of the lemma is outlined in Section \ref{subsection:proof_comparison_toDiffH_proof} below. We next state our key proposition, which relates the strict comparison principle to an estimate on the difference of Hamiltonians.

\begin{proposition} \label{proposition:basic_comparison_using_Hestimate}
    Let $\bH \subseteq C(E) \times C(E)$ satisfy Assumptions \ref{assumption:domain_setup} and \ref{assumption:compatibility}.  Let $h_1,h_2 \in C_b(E)$, and $\lambda > 0$. Consider the equations
    \begin{align}
        f - \lambda H_+ f \leq h_1, \label{eqn:theorem:HJ_comparison_subsolution} \\
        f - \lambda H_- f \geq h_2. \label{eqn:theorem:HJ_comparison_supersolution}
    \end{align}
    Let $u$ and $v$ by viscosity sub- and supersolutions to \eqref{eqn:theorem:HJ_comparison_subsolution} and \eqref{eqn:theorem:HJ_comparison_supersolution}, respectively. For each $\varepsilon \in (0,1)$, $\varphi \in (0,1]$ and $\alpha > 1$, consider the construction of optimizers $x_{\alpha},x_{\alpha}'$ and test functions $f_\dagger,f_\ddagger$ as in Propositions \ref{proposition:optimizing_point_construction} and \ref{proposition:test_function_construction}. 
    \smallskip

    Suppose there exists a map $\varepsilon \mapsto C_{\varepsilon}^0$, and for any $\varepsilon \in (0,1)$ a non-negative map $\varphi \mapsto C_{\varepsilon,\varphi}$ satisfying $\limsup_{\varepsilon \downarrow 0} C_{\varepsilon}^0 < \infty$ and $\lim_{\varphi \downarrow 0} C_{\varepsilon,\varphi} = 0$ such that
        \begin{equation} \label{eqn:comparison_principle_proof_estimate_on_H-H}
        \liminf_{\alpha \rightarrow \infty} \frac{H_+ f_\dagger(x_{\alpha})}{1-\varepsilon}  - \frac{H_- f_\ddagger(x_{\alpha}')}{1+\varepsilon} \leq \varepsilon \left(C_\eps^0 + C_{\varepsilon,\varphi} \right).
    \end{equation}
    Then, for any compact set $K \subseteq E$ and $\varepsilon \in (0,1)$,
    \begin{equation}
        \sup_{x \in K} u(x) - v(x) \leq \varepsilon C_\eps + \sup_{x \in \widehat{K}} h_1(x) - h_2(x),
    \end{equation}
    where $\widehat{K}_\eps := \widehat{K}_\eps(K,u,v)$ and $C_\varepsilon := C_\varepsilon(K,u,v,h_1,h_2)$ are given by
    \begin{align*}
        \widehat{K}_\eps & := \left\{z \in E \, \middle| \, V(z) \leq \frac{\vn{u} + \vn{v}}{\varepsilon} + \ssup{V}_K \right\}, \\
        C_\eps & := \lambda C_\eps^0  + \frac{2}{1-\varepsilon^2} \ssup{V}_K + \frac{1}{1-\eps} \vn{h_1} + \frac{1}{1-\eps} \vn{h_2} - \iinf{\frac{1}{1-\varepsilon}u - \frac{1}{1+\varepsilon}v }_K .
    \end{align*}
    In particular, the strict comparison principle holds for \eqref{eqn:theorem:HJ_comparison_subsolution} and \eqref{eqn:theorem:HJ_comparison_supersolution}.
\end{proposition}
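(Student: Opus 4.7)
The plan is to test the subsolution against $f_\dagger$ and the supersolution against $f_\ddagger$, combine the resulting pointwise inequalities at $x_{\alpha}$ and $x_{\alpha}'$ with the quadruplication bound from Proposition \ref{proposition:optimizing_point_construction} \ref{item:proposition:optimizing_point_construction_estimate_u-v}, and then pass to the limits $\alpha \to \infty$ and $\varphi \downarrow 0$. First, Lemma \ref{lemma:test_functions_in_domain} gives $f_\dagger \in \cD(H_+)$ and $f_\ddagger \in \cD(H_-)$, and because $\cD(H_+) \subseteq C_+(E)$ and $\cD(H_-) \subseteq C_-(E)$ the approximating sequences in Definition \ref{def:viscosity_solution} can be replaced by genuine optimizers (Lemma \ref{lemma:def_equiv}). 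By Proposition \ref{proposition:test_function_construction} \ref{item:test_function_construction:fdagger_f_ddagger_test_functions_uv} these optimizers are precisely $x_{\alpha}$ and $x_{\alpha}'$, so the sub/supersolution inequalities at these points read
\begin{equation*}
u(x_{\alpha}) \leq \lambda H_+ f_\dagger(x_{\alpha}) + h_1(x_{\alpha}), \qquad v(x_{\alpha}') \geq \lambda H_- f_\ddagger(x_{\alpha}') + h_2(x_{\alpha}').
\end{equation*}
Dividing by $1 - \eps$ and $1 + \eps$, respectively, subtracting, and splitting $\frac{h_i}{1 \mp \eps} = h_i \pm \frac{\eps h_i}{1 \mp \eps}$ yields, for every $\alpha > 1$,
\begin{equation*}
\frac{u(x_{\alpha})}{1-\eps} - \frac{v(x_{\alpha}')}{1+\eps} \leq \lambda \left[\frac{H_+ f_\dagger(x_{\alpha})}{1-\eps} - \frac{H_- f_\ddagger(x_{\alpha}')}{1+\eps}\right] + h_1(x_{\alpha}) - h_2(x_{\alpha}') + \eps \left(\frac{\vn{h_1}}{1-\eps} + \frac{\vn{h_2}}{1+\eps}\right).
\end{equation*}

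Next, I would combine the previous display with Proposition \ref{proposition:optimizing_point_construction} \ref{item:proposition:optimizing_point_construction_estimate_u-v}, which bounds $\ssup{u-v}_K$ by the left-hand side above plus $\eps(c_{\eps,\varphi} + o_\alpha(1))$, and pass to the $\liminf$ as $\alpha \to \infty$. The hypothesis \eqref{eqn:comparison_principle_proof_estimate_on_H-H} bounds the $\liminf$ of the Hamiltonian difference by $\eps(C_\eps^0 + C_{\eps,\varphi})$; I fix a subsequence realising this $\liminf$ and use Proposition \ref{proposition:optimizing_point_construction} \ref{item:proposition:optimizing_point_construction_compact2}, \ref{item:proposition:optimizing_limits} to extract a further subsequence along which $(x_{\alpha}, x_{\alpha}') \to (z,z)$ for some $z \in \widehat{K}_\eps$. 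Continuity of $h_1, h_2$ then gives $h_1(x_{\alpha}) - h_2(x_{\alpha}') \to h_1(z) - h_2(z) \leq \sup_{x \in \widehat{K}_\eps}[h_1(x) - h_2(x)]$, whence
\begin{equation*}
\ssup{u-v}_K \leq \eps c_{\eps,\varphi} + \lambda \eps (C_{\eps}^0 + C_{\eps,\varphi}) + \sup_{x \in \widehat{K}_\eps}[h_1(x) - h_2(x)] + \eps \left(\frac{\vn{h_1}}{1-\eps} + \frac{\vn{h_2}}{1+\eps}\right).
\end{equation*}
Sending $\varphi \downarrow 0$ now uses $C_{\eps,\varphi} \to 0$ and, from \eqref{eqn:Cvarphi_construction}, $c_{\eps,\varphi} \to \frac{2}{1-\eps^2} \ssup{V}_K - \iinf{\frac{u}{1-\eps} - \frac{v}{1+\eps}}_K$; the crude estimate $\frac{\vn{h_2}}{1+\eps} \leq \frac{\vn{h_2}}{1-\eps}$ then puts the right-hand side into the form $\eps C_\eps + \sup_{x \in \widehat{K}_\eps}[h_1(x) - h_2(x)]$ with $C_\eps$ and $\widehat{K}_\eps$ exactly as in the statement (cf.\ \eqref{eqn:definition_hatK}). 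The strict comparison principle is then immediate from Definition \ref{definition:comparison}.

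The main obstacle will be the coordination in this final $\liminf$ step: the hypothesis \eqref{eqn:comparison_principle_proof_estimate_on_H-H} only controls the $\liminf$ of the Hamiltonian difference, so the remaining $\alpha$-dependent terms, in particular $h_1(x_\alpha) - h_2(x_\alpha')$ and the $o_\alpha(1)$ coming from Proposition \ref{proposition:optimizing_point_construction} \ref{item:proposition:optimizing_point_construction_estimate_u-v}, must be handled along a subsequence shared with the one realising that $\liminf$. This is exactly what part \ref{item:proposition:optimizing_limits} of Proposition \ref{proposition:optimizing_point_construction} delivers: the optimizers collapse onto the diagonal of $\widehat{K}_\eps$, so a standard diagonal extraction reduces the $h$-contribution to the evaluation of continuous functions at a single limiting point $z \in \widehat{K}_\eps$. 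The subsequent passage $\varphi \downarrow 0$ is then a matter of algebraic bookkeeping to identify the advertised constant $C_\eps$.
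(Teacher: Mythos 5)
Your proposal is correct and follows essentially the same route as the paper's proof: test against $f_\dagger$ and $f_\ddagger$ via Lemma \ref{lemma:test_functions_in_domain}, Lemma \ref{lemma:def_equiv}, and Proposition \ref{proposition:test_function_construction} \ref{item:test_function_construction:fdagger_f_ddagger_test_functions_uv}, insert the resulting inequalities into the bound of Proposition \ref{proposition:optimizing_point_construction} \ref{item:proposition:optimizing_point_construction_estimate_u-v}, take $\liminf_{\alpha\to\infty}$ using \eqref{eqn:comparison_principle_proof_estimate_on_H-H} and part \ref{item:proposition:optimizing_limits}, and then send $\varphi\downarrow 0$. Your explicit subsequence extraction to coordinate the $\liminf$ of the Hamiltonian difference with the convergence of $h_1(x_\alpha)-h_2(x_\alpha')$ is exactly the step the paper treats tersely, and your handling of it is sound.
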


\subsection{Proof of Lemmas \ref{lemma:operator_pushover}, \ref{lemma:test_functions_in_domain}, and Proposition \ref{proposition:basic_comparison_using_Hestimate}} \label{subsection:proof_comparison_toDiffH_proof}

\begin{proof}[Proof of Lemma \ref{lemma:operator_pushover}]
    We only prove the first statement, the second one follows analogously. Let $u$ be a subsolution to $f - \lambda Hf = h$ and let $(f,g) \in H_+$. Our claim thus follows if there exists $x_0$ satisfying
    \begin{gather}
        u(x_0) - f(x_0) = \ssup{u-f}, \label{eqn:push_over_claim1} \\
        u(x_0) - \lambda g(x_0) \leq h(x_0).  \label{eqn:push_over_claim2}
    \end{gather}
    As $u$ is upper semi-continous and bounded, and $f$ has compact sublevel sets, the existence of $x_0$ satisfying \eqref{eqn:push_over_claim1} is immediate. We thus proceed with \eqref{eqn:push_over_claim2} using the sequential upward denseness of $\cD(H)$ in $\cD(H_+)$, cf. Assumption \ref{assumption:domain_setup} \ref{item:assumption_domain_setup:upward_dense}. Set
    \begin{equation*}
        a := f(x_0)  + \ssup{u} - u(x_0), \qquad A := \left\{x \, \middle| \, f(x) \leq a \right\}.
    \end{equation*}
    We can thus find $(f_a,g_a) \in H$ with $f_a$ satisfying
    \begin{equation*}
        \begin{cases}
            f_a(x) = f(x) & \text{if } x \in A, \\
            a < f_a(x) \leq f(x) & \text{if } x \notin A.
        \end{cases}
    \end{equation*}
    We first establish that
    \begin{equation} \label{eqn:push_over_claim1_with_a}
        u(x_0) - f_a(x_0) = \ssup{u-f_a}.
    \end{equation}
    Using \eqref{eqn:push_over_claim1} and that $f = f_a$ on $A$, \eqref{eqn:push_over_claim1_with_a} follows by verifying that
    \begin{equation*}
        u(x) - f_a(x) < u(x_0) - f(x_0), \qquad x \in A^c,
    \end{equation*}
    which follows from the definition of $a$:
    \begin{align*}
        u(x) - f(x) & < u(x) - a \\
        & = u(x) - \left(f(x_0)  + \ssup{u} - u(x_0)\right) \\
        & = u(x_0) - f(x_0) - \left(\ssup{u} - u(x)\right) \\
        & \leq u(x_0) - f(x_0).
    \end{align*}
    Thus, by \eqref{eqn:push_over_claim1_with_a}, we can use the subsolution inequality for $(f_a,g_a)$ in the point $x_0$. We obtain:
    \begin{equation} \label{eqn:push_over_subprinciple_fa}
         u(x_0) - \lambda g_a(x_0) \leq h(x_0).
    \end{equation}
    Recalling that $f_a(x_0) = f(x_0)$ and $f_a \leq f$, we have
    \begin{equation*}
        f_a(x_0) - f(x_0) = \ssup{f_a - f}.
    \end{equation*}
    Using the positive maximum principle for $\bH$, cf. Assumption \ref{assumption:domain_setup} \ref{item:assumption_domain_setup:max_principle}, thus yields 
    \begin{equation} \label{eqn:push_over_subprinciple_ga}
        g_a(x_0) \leq g(x_0).
    \end{equation}
    Combining \eqref{eqn:push_over_subprinciple_fa} and \eqref{eqn:push_over_subprinciple_ga}, leads to
    \begin{equation*}
        u(x_0) - \lambda g(x_0) \leq u(x_0) - \lambda g_a(x_0) \leq h(x_0),
    \end{equation*}
    establishing \eqref{eqn:push_over_claim2} and consquently that $u$ is a subsolution to $f - \lambda H_+ f = h$.
\end{proof}

\begin{proof}[Proof of Lemma \ref{lemma:test_functions_in_domain}] 
As $f_1, f_2, \widehat{f}_1, \widehat{f}_2 \in C_c^\infty(E)$, it follows by Assumption \ref{assumption:domain_setup} \ref{item:assumption_domain_setup:domainH} that $f_1, f_2, \widehat{f}_1, \widehat{f}_2 \in \cD(H)$.
By compatibility, cf.\ Assumption \ref{assumption:compatibility}, we have $V\circ s_z, \Xi \circ s_z \in \cD(\bH)$. By Assumption \ref{assumption:domain_setup} \ref{item:domain_setup_extended_convex} and the fact that $V$ has compact sublevel sets, cf. Definition \ref{definition:perturbation_containment}, we thus have $(1-\varphi) V \circ s_z + \varphi \Xi \circ s_z \in \cD(H_+)$.
Consequently, $\widehat{f}_\dagger, f_\dagger \in \cD(H_+)$ and $\widehat{f}_\ddagger, f_\ddagger \in \cD(H_-)$ by Assumption \ref{assumption:domain_setup} \ref{item:domain_setup_extended_affine}.
\end{proof}

\begin{proof}[Proof of Proposition \ref{proposition:basic_comparison_using_Hestimate}]

Let $u$ be a subsolution of $f - \lambda H_+ f = h_1$ and $v$ a supersolution of $f - \lambda H_- f = h_2$. Consider the constructions in Propositions \ref{proposition:optimizing_point_construction} and \ref{proposition:test_function_construction} for the subsolution $u$, supersolution $v$ and $\varepsilon \in (0,1)$ and $\varphi \in (0,1]$.

By Lemma \ref{lemma:test_functions_in_domain}, we have $f_\dagger \in \cD(H_+)$ and $f_\ddagger \in \cD(H_-)$ and, by Proposition \ref{proposition:test_function_construction} \ref{item:test_function_construction:fdagger_f_ddagger_test_functions_uv}, we find that $(x_{\alpha},x_{\alpha}')$ are the unique optimizers in
    \begin{equation} \label{eqn:proof_comparison_optimizinguf1vf2}
        \begin{aligned}
            u(x_{\alpha}) - f_\dagger(x_{\alpha}) & = \ssup{u - f_\dagger }, \\
            v(x_{\alpha}') - f_\ddagger(x_{\alpha}') & = \ssup{v - f_\ddagger },
        \end{aligned}
    \end{equation}
    which, by the sub- and supersolution properties for $H_+$ and $H_-$, respectively, and Lemma \ref{lemma:def_equiv}, implies that
    \begin{equation} \label{eqn:comparison_proof_subsuperestimate}
        \begin{split}
        u(x_{\alpha}) - \lambda H_+ f_\dagger(x_{\alpha}) \leq h_1(x_{\alpha}), \\
        v(x_{\alpha}') - \lambda H_- f_\ddagger(x_{\alpha}') \geq h_2(x_{\alpha}').
        \end{split}
    \end{equation}
 By Proposition \ref{proposition:optimizing_point_construction} \ref{item:proposition:optimizing_point_construction_estimate_u-v}, we find
        \begin{equation} \label{eqn:comparison_proof_initial_estimate}
             \ssup{u - v}_K \leq \frac{1}{1-\eps}u(x_{\alpha}) - \frac{1}{1+\eps}v(x_{\alpha}') + \varepsilon\left(c_{\eps,\varphi} + o(1) \right),
        \end{equation}
        where 
        \begin{equation} \label{eqn:Cvarphi}
            c_{\eps,\varphi}:= \frac{2}{1-\varepsilon^2} (1-\varphi) \ssup{V}_K - \iinf{\frac{1}{1-\varepsilon}u - \frac{1}{1+\varepsilon}v }_K,
        \end{equation}
        and $o(1)$ is in terms of $\alpha \rightarrow \infty$. Using \eqref{eqn:comparison_proof_subsuperestimate}, we estimate
        \begin{align*}
            \ssup{u - v}_K  & \leq \frac{1}{1-\eps}u(x_{\alpha}) - \frac{1}{1+\eps}v(x_{\alpha}') + \varepsilon\left(c_{\eps,\varphi} + o(1) \right) \\
            & \leq \frac{1}{1-\eps}h_1(x_{\alpha}) - \frac{1}{1+\eps}h_2(x_{\alpha}') + \lambda \left[ \frac{H_+ f_\dagger(x_{\alpha})}{1-\varepsilon} - \frac{H_- f_\ddagger(x_{\alpha}')}{1+\varepsilon} \right] + \varepsilon\left(c_{\eps,\varphi} + o(1) \right) \\
            & \leq h_1(x_{\alpha}) - h_2(x_{\alpha}')  + \lambda \left[ \frac{H_+ f_\dagger(x_{\alpha})}{1-\varepsilon}  - \frac{H_- f_\ddagger(x_{\alpha}')}{1+\varepsilon} \right] \\
            & \hspace{5cm} + \frac{\varepsilon}{1-\varepsilon} \vn{h_1} + \frac{\varepsilon}{1+\varepsilon}\vn{h_2} + \varepsilon\left(c_{\eps,\varphi} + o(1) \right).
        \end{align*}
        We next expand $c_{\eps,\varphi}$ from \eqref{eqn:Cvarphi}. Furthermore, taking $\liminf_{\alpha\rightarrow \infty}$ on the right-hand side, using Proposition \ref{proposition:optimizing_point_construction} \ref{item:proposition:optimizing_limits} to treat the difference $h_1 - h_2$, and \eqref{eqn:comparison_principle_proof_estimate_on_H-H} to treat the difference of Hamiltonians, we find
        \begin{multline}
            \ssup{u - v}_K \leq \ssup{h_1 - h_2}_{\widehat{K}} + \lambda \left(\eps C_0 + C_{\eps,\varphi}\right) + \frac{\eps}{1-\eps}\vn{h_1} + \frac{\eps}{1+\eps}\vn{h_2}\\ 
            + \eps\left(\frac{2}{1-\varepsilon^2} (1-\varphi) \ssup{V}_K - \iinf{\frac{1}{1-\varepsilon}u - \frac{1}{1+\varepsilon}v }_K\right).
        \end{multline}
        As $\varphi \in (0,1]$ was arbitrary, we can take the limit for $\varphi \downarrow 0$, which leads to  
        \begin{multline}
            \ssup{u - v}_K \leq \ssup{h_1 - h_2}_{\widehat{K}}\\ 
            + \eps \left(\lambda C_\eps^0  + \frac{2}{1-\varepsilon^2} \ssup{V}_K + \frac{1}{1-\eps}\vn{h_1} + \frac{1}{1+\eps}\vn{h_2}  - \iinf{\frac{1}{1-\varepsilon}u - \frac{1}{1+\varepsilon}v }_K  \right),
        \end{multline}
        establishing the claim.
\end{proof}

\subsection{Proof of Theorem \ref{th:comparison_HJI}} \label{subsection:proof_comparison_mainProof}
We start with an auxiliary lemma that provides a detailed decomposition of the operators $\bA$ and $\bB$ evaluated in the test functions. 

\begin{lemma}\label{lemma:decomposition_A,B}
     Let $\bA$ and $\bB$ both satisfy Assumption \ref{assumption:domain_setup} and Assumption \ref{assumption:compatibility} \ref{item:compatA} and \ref{item:compatB}, respectively. Fix $z_0,z_1\in \bR^q$ and $p \in \bR^q$. Let $\Xi = \Xi_{z_0,p,z_1}$ as in Definition \ref{definition:perturbation_first_second_order} and, for $\widehat{f} \in C_c^\infty (E)$, $\varepsilon \in (0,1)$, and $\varphi \in (0,1]$, set
    \begin{align*}
        \widehat{f}_\dagger & :=  (1-\varepsilon) \widehat{f} + \varepsilon (1-\varphi) V + \varepsilon \varphi \Xi, \\
        \widehat{f}_\ddagger & :=  (1+\varepsilon) \widehat{f} - \varepsilon (1-\varphi) V - \varepsilon \varphi \Xi.
    \end{align*}
    For $z \in E$, set $f_\dagger = \widehat{f}_\dagger \circ s_z$, and $f_\ddagger = \widehat{f}_\ddagger \circ s_z$. Then, the following statements hold:
      \begin{enumerate}[(a)]
          \item $f_\dagger \in \cD(A_+)$ and $f_\ddagger \in \cD(A_-)$. Suppose furthermore that $\bA$ is linear on its domain, then
              \begin{align}\label{eq:lemma:linearity_testfunction}
                \frac{A_+ f_\dagger}{1-\varepsilon} & = A(\widehat{f} \circ s_z) + \frac{\varepsilon}{1-\varepsilon} (1-\varphi) A_+ \left(V \circ s_z\right) + \frac{\varepsilon}{1-\varepsilon} \varphi \bA \left(\Xi \circ s_z\right),\\
                \frac{A_- f_\ddagger}{1+\varepsilon} & = A(\widehat{f}\circ s_z) - \frac{\varepsilon}{1+\varepsilon} (1-\varphi) A_+ \left(V \circ s_z\right) - \frac{\varepsilon}{1+\varepsilon} \varphi \bA \left(\Xi \circ s_z\right),
            \end{align}
            \item $f_\dagger, \widehat{f}_\dagger \in \cD(B_+)$ and $f_\ddagger, \widehat{f}_\ddagger \in \cD(B_-)$. Suppose furthermore that $\bB$ is convex, then for any $x,y$ such that $z = x-y$, we have
            \begin{align}\label{eq:lemma:convexity_testfunction}
                \frac{B_+ f_\dagger}{1-\varepsilon}(x) & \leq \frac{1}{1-\varepsilon} \left(B_+ f_\dagger (x) - B_+ \widehat{f}_\dagger (y) \right)  +  B\widehat{f} (y) \\
                & \hspace{2cm} +  \frac{\varepsilon}{1-\varepsilon} (1-\varphi) B_+ V(y) + \frac{\varepsilon}{1-\varepsilon} \varphi B_+ \Xi(y),\\
                \frac{B_- f_\ddagger}{1+\varepsilon}(x) & \geq \frac{1}{1+\varepsilon}\left(B_- f_\ddagger(x) - B_- \widehat{f}_\ddagger(y) \right) + B \widehat{f}(y) \\
                & \hspace{2cm} - \frac{\varepsilon}{1+\varepsilon} (1-\varphi) B_- V(y) - \frac{\varepsilon}{1+\varepsilon} \varphi B_- \Xi(y).
            \end{align}
      \end{enumerate}
\end{lemma}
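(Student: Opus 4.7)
Both parts share the same two-step skeleton: first verify domain membership for the composed test functions, then exploit the algebraic structure of $\bA$ (linearity) or $\bB$ (convexity in the gradient argument). For domain membership I would argue as follows. Since $\widehat{f}\in C_c^\infty(E)\subseteq \cD(H)$ by Assumption \ref{assumption:domain_setup} \ref{item:assumption_domain_setup:domainH}, the shift $\widehat{f}\circ s_z\in C_c^\infty(E)$ also lies in $\cD(H)\subseteq \cD(\bA)\cap\cD(\bB)$. Compatibility (Assumption \ref{assumption:compatibility} \ref{item:compatA:domain} and \ref{item:compatB:domain}) places $V\circ s_z$ and $\Xi\circ s_z$ in $\cD(\bA)\cap\cD(\bB)$. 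Because $V$ has compact sub-level sets and dominates $\widehat{f}$ and $\Xi$ at infinity, the combination defining $f_\dagger$ has compact sub-level sets; together with Assumption \ref{assumption:domain_setup} \ref{item:domain_setup_extended_convex}--\ref{item:domain_setup_extended_affine}, this yields $f_\dagger\in\cD(A_+)\cap \cD(B_+)$. The dual reasoning gives $f_\ddagger\in\cD(A_-)\cap\cD(B_-)$ and covers $\widehat{f}_\dagger,\widehat{f}_\ddagger$ as well.

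For part (a), linearity of $\bA$ on its domain (Definition \ref{definition:extended_linear}), applied to $f_\dagger = (1-\varepsilon)\widehat{f}\circ s_z + \varepsilon(1-\varphi) V\circ s_z + \varepsilon\varphi\,\Xi\circ s_z$ and followed by a division by $(1-\varepsilon)$, is exactly the first identity in \eqref{eq:lemma:linearity_testfunction}; the identity for $f_\ddagger$ is symmetric, with $(1+\varepsilon)$ in the denominator and the two minus signs absorbed into the combination. For part (b), since $\bB$ is local first-order with symbol $\cB$ (Definition \ref{def:first_order:only_first_order}) and $p\mapsto\cB(y,p)$ is convex (Definition \ref{definition:first_order}), I would apply convexity pointwise at $y$ to
\begin{equation*}
\nabla\widehat{f}_\dagger(y) = (1-\varepsilon)\nabla\widehat{f}(y) + \varepsilon(1-\varphi)\nabla V(y) + \varepsilon\varphi\,\nabla\Xi(y),
\end{equation*}
obtaining $B_+\widehat{f}_\dagger(y) \leq (1-\varepsilon)B\widehat{f}(y) + \varepsilon(1-\varphi)B_+V(y) + \varepsilon\varphi B_+\Xi(y)$. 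Dividing by $(1-\varepsilon)$ and inserting the telescoping term $\tfrac{1}{1-\varepsilon}(B_+f_\dagger(x) - B_+\widehat{f}_\dagger(y))$, which records how $\cB$ changes in its first argument under the shift $x\mapsto y = x-z$, then produces the subsolution inequality.

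The main obstacle, on the supersolution side, is that $\widehat{f}_\ddagger = (1+\varepsilon)\widehat{f} - \varepsilon(1-\varphi)V - \varepsilon\varphi\Xi$ has negative weights, so convexity cannot be applied directly. The fix is to read the defining identity backwards: the relation
\begin{equation*}
\widehat{f} = \tfrac{1}{1+\varepsilon}\widehat{f}_\ddagger + \tfrac{\varepsilon(1-\varphi)}{1+\varepsilon} V + \tfrac{\varepsilon\varphi}{1+\varepsilon}\Xi
\end{equation*}
has coefficients summing to one, so convexity of $\cB(y,\cdot)$ yields an upper bound on $B\widehat{f}(y)$. Solving this inequality for $\tfrac{1}{1+\varepsilon}B_-\widehat{f}_\ddagger(y)$ flips its direction, and adding the analogous telescoping correction $\tfrac{1}{1+\varepsilon}(B_-f_\ddagger(x) - B_-\widehat{f}_\ddagger(y))$ produces the second inequality in \eqref{eq:lemma:convexity_testfunction}. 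Everything else is bookkeeping with linearity, convexity, and the shift map $s_z$.
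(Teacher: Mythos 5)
Your proposal is correct and follows essentially the same route as the paper, whose proof simply cites Lemma \ref{lemma:test_functions_in_domain} for the domain statements and invokes linearity of $A_+$ and convexity of $B_+$ for the four (in)equalities. You supply the details the paper omits, and in particular you correctly identify the only subtle step: on the supersolution side the negative weights force you to read $\widehat{f}=\tfrac{1}{1+\varepsilon}\widehat{f}_\ddagger+\tfrac{\varepsilon(1-\varphi)}{1+\varepsilon}V+\tfrac{\varepsilon\varphi}{1+\varepsilon}\Xi$ as the convex combination, which is exactly what makes the reversed inequality come out.
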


\begin{proof}
The domain statements $f_\dagger \in \cD(A_+)$, $f_\ddagger \in \cD(A_-)$, $f_\dagger,\widehat{f}_\dagger \in \cD(B_+)$ and $f_\ddagger,\widehat{f}_\ddagger \in \cD(B_-)$ follow by Lemma \ref{lemma:test_functions_in_domain}. The four statements in \eqref{eq:lemma:linearity_testfunction} and \eqref{eq:lemma:convexity_testfunction} follow from linearity of $A_+$ and convexity of $B_+$.
\end{proof}

\begin{proof}[Proof of Theorem \ref{th:comparison_HJI}]
 To prove inequality \eqref{eq:th:final}, and consequently the strong comparison principle for the Hamilton--Jacobi equation in terms of $H$,  it suffices by Lemma \ref{lemma:operator_pushover} and Proposition \ref{proposition:basic_comparison_using_Hestimate} to establish \eqref{eqn:comparison_principle_proof_estimate_on_H-H}, which we repeat for readability:
\begin{equation} \label{eqn:comparison_principle_proof_estimate_on_H-H_inproof}
        \liminf_{\alpha \rightarrow \infty} \frac{H_+ f_\dagger(x_{\alpha})}{1-\varepsilon}  - \frac{H_- f_\ddagger(x_{\alpha}')}{1+\varepsilon} \leq \varepsilon \left(C_\eps^0 + C_{\varepsilon,\varphi} \right).
\end{equation}
Let $\theta^*_{1, \alpha} \in \Theta_1$ be such that 
\begin{align}
    H_+ f_\dagger(x_\alpha) &= \sup_{\theta_1 \in\Theta_1}\inf_{\theta_2\in\Theta_2}\left\{\bA_{\theta_{1},\theta_2} f_\dagger(x_\alpha) + \bB_{\theta_{1},\theta_2} f_\dagger(x_\alpha) - \cI(x_\alpha,\theta_{1},\theta_2)\right\}\\
    &=\inf_{\theta_2\in\Theta_2} \left\{\bA_{\theta^*_{1, \alpha},\theta_2} f_\dagger(x_\alpha) + \bB_{\theta^*_{1, \alpha},\theta_2} f_\dagger(x_\alpha) - \cI(x_\alpha,\theta^*_{1, \alpha},\theta_2)\right\}.
\end{align}
Such optimizer exists by the compactness of $\Theta_1$ and the lower semi-continuity of $\cI$ in $\theta_1$ assumed in \ref{item:assumption_directHJI_lsc}. By Isaacs' condition \ref{item:assumption:isaacs_cond}, we can write
\begin{equation}
    H_-f_\ddagger(x'_\alpha) = \inf_{\theta_2\in\Theta_2}\sup_{\theta_1\in\Theta_1} \left\{\bA_{\theta_{1},\theta_2} f_\ddagger(x'_\alpha) + \bB_{\theta_{1},\theta_2} f_\ddagger(x_\alpha) - \cI(x'_\alpha,\theta_{1},\theta_2)\right\}.
\end{equation}
Then, by compactness of $\Theta_2$ and the upper semi-continuity of $\cI$ in $\theta_2$ assumed in \ref{item:assumption_directHJI_lsc}, we can find $\theta_{2,\alpha}^*\in\Theta_2$ such that
\begin{equation}
    H_- f_\ddagger(x'_\alpha) = \sup_{\theta_1\in \Theta_1} \left\{\bA_{\theta_1,\theta_{2, \alpha}^*} f_\ddagger(x'_\alpha) + \bB_{\theta_1,\theta_{2, \alpha}^*}f_\ddagger(x'_\alpha) - \cI(x_\alpha', \theta_1,\theta_{2, \alpha}^*)\right\}.
\end{equation}
Consequently, we can estimate
\begin{align}
    \frac{1}{1-\varepsilon}  H_+ f_\dagger(x_\alpha) - \frac{1}{1+\varepsilon} H_- f_\ddagger(x'_\alpha) & \leq \underbrace{\left[\frac{1}{1-\varepsilon} \bA_{\theta^*_{1, \alpha}, \theta^*_{2, \alpha}} f_\dagger(x_\alpha) - \frac{1}{1+\varepsilon}\bA_{\theta^*_{1, \alpha}, \theta^*_{2, \alpha}}f_\ddagger(x'_\alpha)\right]}_{\hypertarget{proof:convex_split:a}{(1)}} \\
    & \;\; + \underbrace{\left[\frac{1}{1-\varepsilon}  \bB_{\theta^*_{1, \alpha}, \theta^*_{2, \alpha}} f_\dagger(x_\alpha) - \frac{1}{1+\varepsilon} \bB_{\theta^*_{1, \alpha}, \theta^*_{2, \alpha}} f_\ddagger(x'_\alpha)\right]}_{\hypertarget{proof:convex_split:b}{(2)}} \\
    & \;\; + \underbrace{\left[ \frac{1}{1+\varepsilon}  \cI(x'_\alpha,\theta^*_{1, \alpha}, \theta^*_{2, \alpha}) - \frac{1}{1-\varepsilon}\cI(x_\alpha,\theta^*_{1, \alpha}, \theta^*_{2, \alpha})\right]}_{\hypertarget{proof:convex_split:i}{(3)}}.
\end{align}
We treat \hyperlink{proof:convex_split:a}{$(1)$}, \hyperlink{proof:convex_split:b}{$(2)$}, and \hyperlink{proof:convex_split:i}{$(3)$} separately. Note, that due the compactness of $\Theta_1$ and $\Theta_2$, the sequences of optimizers $\theta^*_{1, \alpha}$ and $\theta^*_{2,\alpha}$ converge to some $\theta^*_1$ and $\theta^*_2$, respectively.

{\bfseries Estimate \hyperlink{proof:convex_split:a}{$(1)$}:}
Using the expansions of $A_+ f_\dagger$ and $A_- f_\ddagger$ obtained in Lemma \ref{lemma:decomposition_A,B} we find
\begin{align}\label{eq:mainth_proof_decomposition_A}
    \frac{\bA_{\theta^*_{1, \alpha}, \theta^*_{2, \alpha}} f_\dagger (x_\alpha)}{1-\eps} - \frac{\bA_{\theta^*_{1, \alpha}, \theta^*_{2, \alpha}} f_\ddagger (x_\alpha')}{1+\eps}&=
    \frac{A_{\theta^*_{1, \alpha}, \theta^*_{2, \alpha}, +} f_\dagger (x_\alpha)}{1-\eps} - \frac{A_{\theta^*_{1, \alpha}, \theta^*_{2, \alpha},-} f_\ddagger (x_\alpha')}{1+\eps}\nonumber\\
    &\leq A_{\theta^*_{1, \alpha}, \theta^*_{2, \alpha}} f_1(x_{\alpha}) - A_{\theta^*_{1, \alpha}, \theta^*_{2, \alpha}} f_2(x_{\alpha}')\nonumber\\
    &\quad + \frac{\varepsilon}{1-\varepsilon} (1-\varphi) A_{\theta^*_{1, \alpha}, \theta^*_{2, \alpha},+} \left( V \circ s_{x_{\alpha} - y_{\alpha}} \right)(x_{\alpha})\nonumber\\
    &\quad + \frac{\varepsilon}{1+\varepsilon} (1-\varphi) A_{\theta^*_{1, \alpha}, \theta^*_{2, \alpha}, +} \left( V \circ s_{x_{\alpha}' - y_{\alpha}'} \right)(x_{\alpha}')\nonumber \\
    &\quad + \frac{\varepsilon}{1-\varepsilon} \varphi \bA_{\theta^*_{1, \alpha}, \theta^*_{2, \alpha}} \left( \Xi_1 \circ s_{x_{\alpha} - y_{\alpha}} \right)(x_{\alpha})\nonumber\\
    &\quad + \frac{\varepsilon}{1+\varepsilon} \varphi \bA_{\theta^*_{1, \alpha}, \theta^*_{2, \alpha}} \left( \Xi_2 \circ s_{x_{\alpha}' - y_{\alpha}'} \right)(x_{\alpha}').
\end{align}
We first consider the terms involving $V$ and $\Xi$. By Proposition \ref{proposition:optimizing_point_construction} \ref{item:proposition:optimizing_limits}, we have that, along subsequences, the optimizers $(x_\alpha ,y_\alpha ,y_{\alpha,0} ,y'_{\alpha,0} ,y'_\alpha, x'_\alpha)$ converge to $(z,z,z,z,z,z)$ with $z \in \widehat{K}$ and $p_\alpha,p'_\alpha \in B_{1/\alpha}(0)$. Then, using the compatibility of $\bA_{\theta_1, \theta_2}$, cf. Assumption \ref{assumption:compatibility}, we find

\begin{align}
    \liminf_{\alpha\to\infty}  & \frac{\varepsilon}{1-\varepsilon} (1-\varphi) A_{\theta^*_{1, \alpha}, \theta^*_{2, \alpha},+} \left( V \circ s_{x_{\alpha} - y_{\alpha}} \right)(x_{\alpha})\nonumber\\
    & \quad + \frac{\varepsilon}{1+\varepsilon} (1-\varphi) A_{\theta^*_{1, \alpha}, \theta^*_{2, \alpha},+} \left( V \circ s_{x_{\alpha}' - y_{\alpha}'} \right)(x_{\alpha}')\nonumber\\
    & \quad + \frac{\varepsilon}{1-\varepsilon} \varphi \bA_{\theta^*_{1, \alpha}, \theta^*_{2, \alpha}} \left( \Xi_1 \circ s_{x_{\alpha} - y_{\alpha}} \right)(x_{\alpha}) + \frac{\varepsilon}{1+\varepsilon} \varphi \bA_{\theta^*_{1, \alpha}, \theta^*_{2, \alpha}} \left( \Xi_2 \circ s_{x_{\alpha}' - y_{\alpha}'} \right)(x_{\alpha}')\nonumber \\
    & \leq \frac{2\eps}{1-\eps^2} \left( (1-\varphi) A_{\theta^*_{1}, \theta^*_{2},+} (V)(z) + \varphi \bA_{\theta^*_{1}, \theta^*_{2}}(\Xi_{z,0,z})(z) \right).
    \label{eq:mainth_proof_limsupA}
\end{align}
Next, we consider the second line in \eqref{eq:mainth_proof_decomposition_A}. Using that, for all $\theta_1$, $\theta_2$, $\bA_{\theta_1, \theta_2}$ has a controlled growth coupling $\widehat{\bA}_{\theta_1, \theta_2}$ with a modulus uniform in $\theta_1$ and $\theta_2$ satisfying the maximum principle and Proposition \ref{proposition:test_function_construction} \ref{item:test_function_construction:optimizing_f1f2}, we find
    \begin{align}\label{eq:mainth_proof_coupling}
    A_{\theta^*_{1, \alpha}, \theta^*_{2, \alpha}} f_1(x_{\alpha}) - A_{\theta^*_{1, \alpha}, \theta^*_{2, \alpha}} f_2(x_{\alpha}') & = \widehat{\bA}_{\theta^*_{1, \alpha}, \theta^*_{2, \alpha}} \left(f_1 \ominus f_2\right) (x_{\alpha},x_{\alpha}')\nonumber \\
    & \leq \widehat{\bA}_{\theta^*_{1, \alpha}, \theta^*_{2, \alpha}} \left(\frac{\alpha}{2} d^2_{x_{\alpha}-y_{\alpha},x_{\alpha}'-y_{\alpha}'}\right)(x_{\alpha},x_{\alpha}')\nonumber \\
    & \leq \omega_{\widehat{\bA}, \widehat{K}} \left(\alpha \left(d(x_{\alpha},y_{\alpha}) + d(y_{\alpha},y_{\alpha}') + d(y_{\alpha}', x_{\alpha}')\right)^2 \right.\nonumber\\
    & \hspace{0.5cm}   +\left(d(x_{\alpha},y_{\alpha}) + d(y_{\alpha},y_{\alpha}') + d(y_{\alpha}', x_{\alpha}')\right)\Big),
\end{align}
which converges to $0$ as $\alpha \rightarrow \infty$ by Proposition \ref{proposition:optimizing_point_construction} \ref{item:proposition:optimizing_point_construction_2optimizers_convergence}.

{\bfseries Estimate \hyperlink{proof:convex_split:b}{$(2)$}:}
By using the expansions of $B_+ f_\dagger$ and $B_- f_\ddagger$ obtained in Lemma \ref{lemma:decomposition_A,B}, we find
\begin{align}\label{eq:mainth_proof_decomposition_B}
    \frac{\bB_{\theta^*_{1, \alpha}, \theta^*_{2, \alpha}} f_\dagger (x_\alpha)}{1-\eps} &- \frac{B_{\theta^*_{1, \alpha}, \theta^*_{2, \alpha}} f_\ddagger (x_\alpha')}{1+\eps} =
    \frac{B_{\theta^*_{1, \alpha}, \theta^*_{2, \alpha},+} f_\dagger (x_\alpha)}{1-\eps} - \frac{B_{\theta^*_{1, \alpha}, \theta^*_{2, \alpha}, -} f_\ddagger (x_\alpha')}{1+\eps}\\
    &\leq B_{\theta^*_{1, \alpha}, \theta^*_{2, \alpha}} \widehat{f}_1(y_{\alpha}) - B_{\theta^*_{1, \alpha}, \theta^*_{2, \alpha}} \widehat{f}_2(y_{\alpha}') \\
    &+ \frac{1}{1-\varepsilon} \left(B_{\theta^*_{1, \alpha}, \theta^*_{2, \alpha}, +} f_\dagger(x_{\alpha}) - B_{\theta^*_{1, \alpha}, \theta^*_{2, \alpha}, +} \widehat{f}_\dagger(y_{\alpha}) \right) \\
    & + \frac{1}{1+\varepsilon} \left(B_{\theta^*_{1, \alpha}, \theta^*_{2, \alpha}, -} \widehat{f}_\ddagger(y_{\alpha}') - B_{\theta^*_{1, \alpha}, \theta^*_{2, \alpha}, -} f_\ddagger(x_{\alpha}') \right) \\
    &+ \frac{\varepsilon}{1-\varepsilon} (1-\varphi) B_{\theta^*_{1, \alpha}, \theta^*_{2, \alpha}, +} V(y_{\alpha}) + \frac{\varepsilon}{1+\varepsilon} (1-\varphi) B_{\theta^*_{1, \alpha}, \theta^*_{2, \alpha}, +} V(y_{\alpha}')\\
    &+ \frac{\varepsilon}{1-\varepsilon} \varphi \bB_{\theta^*_{1, \alpha}, \theta^*_{2, \alpha}} \Xi_1(y_{\alpha}) + \frac{\varepsilon}{1+\varepsilon} \varphi \bB_{\theta^*_{1, \alpha}, \theta^*_{2, \alpha}} \Xi_2(y_{\alpha}').
\end{align}
Again, by sending $\alpha\to\infty$, using Proposition \ref{proposition:optimizing_point_construction} \ref{item:proposition:optimizing_limits}, and the compatibility of $\bB_{\theta_1, \theta_2}$, cf. Assumption \ref{assumption:compatibility}, we obtain that
\begin{align}\label{eq:mainth_proof_limsupB}
    \liminf_{\alpha\to\infty} & \frac{\varepsilon}{1-\varepsilon} (1-\varphi) B_{\theta^*_{1, \alpha}, \theta^*_{2, \alpha}, +} V(y_{\alpha}) + \frac{\varepsilon}{1+\varepsilon} (1-\varphi) B_{\theta^*_{1, \alpha}, \theta^*_{2, \alpha}, +} V(y_{\alpha}')\\
    & + \frac{\varepsilon}{1-\varepsilon} \varphi \bB_{\theta^*_{1, \alpha}, \theta^*_{2, \alpha}} \Xi_1(y_{\alpha}) + \frac{\varepsilon}{1+\varepsilon} \varphi \bB_{\theta^*_{1, \alpha}, \theta^*_{2, \alpha}} \Xi_2(y_{\alpha}') \\
    &\leq \frac{2\eps}{1-\eps^2} \left( (1-\varphi) B_{\theta^*_1, \theta^*_2, +}(V)(z) + \varphi \bB_{\theta^*_{1}, \theta^*_{2}}(\Xi_{z,0,z})(z)\right).
\end{align}

Using that, for all $\theta_1$, $\theta_2$, $\bB_{\theta_{1}, \theta_{2}}$ is semi-monotone with $\cB_{\theta_{1}, \theta_{2}}$ and the expressions for the gradients obtained in Proposition \ref{proposition:test_function_construction}, we find that
\begin{align}\label{eq:mainth_proof_convexity}
    &\frac{1}{1-\varepsilon} \left(B_{\theta^*_{1, \alpha}, \theta^*_{2, \alpha}, +} f_\dagger(x_{\alpha}) - B_{\theta^*_{1, \alpha}, \theta^*_{2, \alpha}, +} \widehat{f}_\dagger(y_{\alpha}) \right) \nonumber\\
    &\qquad \qquad + B_{\theta^*_{1, \alpha}, \theta^*_{2, \alpha}} \widehat{f}_1(y_{\alpha}) - B_{\theta^*_{1, \alpha}, \theta^*_{2, \alpha}} \widehat{f}_2(y_{\alpha}') \nonumber\\
    &\qquad \qquad + \frac{1}{1+\varepsilon} \left(B_{\theta^*_{1, \alpha}, \theta^*_{2, \alpha}, -} \widehat{f}_\ddagger(y_{\alpha}') - B_{\theta^*_{1, \alpha}, \theta^*_{2, \alpha}, -} f_\ddagger(x_{\alpha}') \right)\nonumber\\
    & = \frac{1}{1-\varepsilon} \left(\cB_{\theta^*_{1, \alpha}, \theta^*_{2, \alpha}}(x_{\alpha},\alpha(x_{\alpha} - y_{\alpha}) )- \cB_{\theta^*_{1, \alpha}, \theta^*_{2, \alpha}}(y_{\alpha},\alpha(x_{\alpha} - y_{\alpha}))\right)\nonumber\\
    &\qquad \qquad + \cB_{\theta^*_{1, \alpha}, \theta^*_{2, \alpha}}(y_{\alpha},\alpha(y_{\alpha} - y_{\alpha}')) - \cB_{\theta^*_{1, \alpha}, \theta^*_{2, \alpha}}(y_{\alpha}',\alpha(y_{\alpha} - y_{\alpha}'))\nonumber \\
    &\qquad \qquad + \frac{1}{1+\varepsilon} \left(\cB_{\theta^*_{1, \alpha}, \theta^*_{2, \alpha}}(y_{\alpha},\alpha(y_{\alpha}' - x_{\alpha}')) - \cB_{\theta^*_{1, \alpha}, \theta^*_{2, \alpha}}(x_{\alpha}',\alpha(y_{\alpha}' - y_{\alpha}'))\right).
\end{align}
By the semi-monotonicity property of $\bB_{\theta_1,\theta_2}$, \eqref{eq:mainth_proof_convexity} is bounded by
\begin{align}\label{eq:mainth_proof_convexity_modulus}
     &\frac{1}{1-\varepsilon}\omega_{\cB,\widehat{K}}(d(x_\alpha,y_{\alpha})+ \alpha d^2(x_{\alpha},y_{\alpha})) + \omega_{\cB, \widehat{K}}(d(y_\alpha,y'_\alpha) +\alpha d^2(y_{\alpha},y_{\alpha}'))\nonumber\\
    &\qquad \qquad + \frac{1}{1+\varepsilon}\omega_{\cB, \widehat{K}}(d(y_\alpha',x_\alpha') +\alpha d^2(y_{\alpha}',x_{\alpha}')).
\end{align}
Thus, taking the $\liminf_{\alpha \rightarrow \infty}$ gives $0$ by Proposition \ref{proposition:optimizing_point_construction} \ref{item:proposition:optimizing_point_construction_2optimizers_convergence}. 

{\bfseries Estimate \hyperlink{proof:convex_split:i}{$(3)$}:}
We have
\begin{align*}
    & \frac{1}{1+\varepsilon}  \cI(x'_\alpha,\theta^*_{1, \alpha}, \theta^*_{2, \alpha}) - \frac{1}{1-\varepsilon}\cI(x_\alpha,\theta^*_{1, \alpha}, \theta^*_{2, \alpha})\\
    & =  \left[ \cI(x'_\alpha,\theta^*_{1, \alpha}, \theta^*_{2, \alpha}) - \cI(x_\alpha,\theta^*_{1, \alpha}, \theta^*_{2, \alpha})\right] - \frac{\varepsilon}{1-\varepsilon} \cI(x_\alpha,\theta^*_{1, \alpha}, \theta^*_{2, \alpha}) - \frac{\varepsilon}{1+\varepsilon} \cI(x'_\alpha,\theta^*_{1, \alpha}, \theta^*_{2, \alpha}).
\end{align*}
By assumption, $\cI$ admits a modulus of continuity $\omega_{\cI, K}$, uniform in $\theta_1$, $\theta_2$, implying
\begin{multline}
    \frac{1}{1+\varepsilon}  \cI(x'_\alpha,\theta^*_{1, \alpha}, \theta^*_{2, \alpha}) - \frac{1}{1-\varepsilon}\cI(x_\alpha,\theta^*_{1, \alpha}, \theta^*_{2, \alpha}) \\
    \leq \omega_{\cI, \widehat{K}} (d(x_{\alpha}, x'_{\alpha}))  - \frac{\varepsilon}{1-\varepsilon} (1-\varphi) \cI(x_\alpha,\theta^*_{1, \alpha}, \theta^*_{2, \alpha}) - \frac{\varepsilon}{1+\varepsilon} (1-\varphi) \cI(x'_\alpha,\theta^*_{1, \alpha}, \theta^*_{2, \alpha}).
\end{multline}
Sending $\alpha\to\infty$, using the lower semi-continuity of $\cI$, and using Proposition \ref{proposition:optimizing_point_construction} \ref{item:proposition:optimizing_limits}, we find
\begin{equation}\label{eq:mainth_proof_costterm}
\begin{aligned}
        & \liminf_{\alpha \rightarrow \infty} \frac{1}{1+\varepsilon} \cI(x'_\alpha,\theta^*_{1, \alpha}, \theta^*_{2, \alpha}) - \frac{1}{1-\varepsilon} \cI(x_\alpha,\theta^*_{1, \alpha}, \theta^*_{2, \alpha}) \\
        & \quad \leq \liminf_{\alpha \rightarrow \infty} \omega_{\cI, \widehat{K}} (d(x_{\alpha}, x'_{\alpha})) \\
        & \qquad + \limsup_{\alpha \rightarrow \infty} \left[ - \frac{\varepsilon}{1-\varepsilon} (1-\varphi) \cI(x_\alpha,\theta^*_{1, \alpha}, \theta^*_{2, \alpha}) - \frac{\varepsilon}{1+\varepsilon} (1-\varphi) \cI(x'_\alpha,\theta^*_{1, \alpha}, \theta^*_{2, \alpha}) \right] \\
        & \quad \leq - \frac{2\varepsilon}{1-\varepsilon^2} (1-\varphi) \cI(z,\theta^*_{1}, \theta^*_{2}).
\end{aligned}
\end{equation}

{\bfseries Conclusion:}
Putting together \eqref{eq:mainth_proof_limsupA}, \eqref{eq:mainth_proof_coupling}, \eqref{eq:mainth_proof_limsupB}, \eqref{eq:mainth_proof_convexity_modulus}, and \eqref{eq:mainth_proof_costterm}, we can conclude that
\begin{align}
     \liminf_{\alpha \rightarrow \infty} \frac{H_+ f_\dagger (x_\alpha)}{1-\eps} - \frac{H_- f_\ddagger (x_\alpha')}{1+\eps} &\leq \frac{2\eps}{1-\eps^2} \left( (1-\varphi) A_{\theta^*_1, \theta^*_2, +} V(z) + \varphi \bA_{\theta^*_1, \theta^*_2}(\Xi_{z,0,z})(z) \right) \\
    &   \quad + \frac{2\eps}{1-\eps^2} \left( (1-\varphi) B_{\theta^*_1, \theta^*_2, +} V(z) + \varphi \bB_{\theta^*_1, \theta^*_2}(\Xi_{z,0,z})(z)\right) \\
    &   \quad - \frac{2\varepsilon}{1-\varepsilon^2} (1-\varphi) \cI(z,\theta^*_{1}, \theta^*_{2}) \\
    &   \leq \frac{2\eps}{1-\eps^2} (1-\varphi)\ssup{(A_{\theta^*_1, \theta^*_2, +} + B_{\theta^*_1, \theta^*_2, +})(V) - \cI(\cdot, \theta^*_{1}, \theta^*_{2})} \\
    & \quad + \frac{2\eps}{1-\eps^2} \varphi \ssup{(\bA_{\theta^*_1, \theta^*_2} + \bB_{\theta^*_1, \theta^*_2})(\Xi_{\cdot,0,\cdot})}_{\widehat{K}_\eps}  \\
    & \leq \eps \left( \frac{2}{1-\eps^2} c_{V} + \frac{2}{1-\eps^2} \varphi \ssup{(\bA_{\theta^*_1, \theta^*_2} + \bB_{\theta^*_1, \theta^*_2})(\Xi_{\cdot,0,\cdot})}_{\widehat{K}_\eps} \right)  \\
    & \leq \eps \left(C_\eps^0 + C_{\eps,\varphi} \right)
\end{align}
with $c_{V}$ given by \eqref{eq:mainth_HJI_bound}, and $C_\eps^0$ and $C_{\eps,\varphi}$ defined via the last two lines. The estimate on the difference of Hamiltonians \eqref{eqn:comparison_principle_proof_estimate_on_H-H_inproof} and thus \eqref{eqn:comparison_principle_proof_estimate_on_H-H} of Proposition \ref{proposition:basic_comparison_using_Hestimate} are satisfied. As a consequence our final estimate \eqref{eq:th:final} and, consequently, the strong comparison principle follow.
\end{proof}

\appendix
\section{The Jensen perturbation}\label{sec:jensen}

The main result of this section is Proposition \ref{proposition:Jensen_Alexandrov_cutoff} that allows us to perturb a semi-convex function with a unique extreme point such that we get a new extreme point close by, in which the function is twice differentiable. The result is a variant of the well-known perturbation result by Jensen, see e.g. \cite[Lemma A.3]{CIL92}.

\begin{proposition} \label{proposition:Jensen_Alexandrov_cutoff}
    Fix $\eta > 0$.  Let $\phi : E \times E \rightarrow \bR$ be bounded above and semi-convex with convexity constant $\kappa \geq 1$. Suppose that $(x_0,y_0)$ is an optimizer of
    \begin{equation*}
        \phi(x_0,y_0) = \ssup{\phi}.
    \end{equation*}
    Let $R > 0$, $\{\zeta_{z,p}\}_{z \in E, p \in \bR^q} \subset C(E)$ and $\{\xi_{z}\}_{z \in E} \subset C^1(E)$ and semi-concavity constant $\kappa_\xi$ be as in Definition \ref{definition:perturbation_first_second_order}.

    Fix  $\varepsilon_1,\varepsilon_2 > 0$ such that $1 - (\varepsilon_1+\varepsilon_2) \kappa_\xi > 0$. Furthermore, define for $p= (p_1,p_2) \in \bR^{q} \times \bR^q$ the perturbed functions
    \begin{equation} \label{eqn:def:phi_p_eta}
        \phi_{p}(x,y) := \phi(x,y) - \eps_1 \left(\xi_{x_0} (x) + \zeta_{x_0, p_1} (x)\right) - \eps_2 \left( \xi_{y_0} (y) + \zeta_{y_0, p_2} (y)\right).
    \end{equation}
    Then there exist $p_1,p_2 \in B_\eta(0)$, and a pair $(x_1,y_1) \in B_\eta(x_0) \times B_\eta(y_0)$ globally maximizing $\phi_{p}$ at which $\phi_{p}$ is twice differentiable. 
\end{proposition}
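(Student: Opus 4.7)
The strategy is a two-step perturbation, in the spirit of the classical Jensen perturbation lemma \cite[Lemma A.3]{CIL92}, tailored to the penalization families of Definition \ref{definition:perturbation_first_second_order}. The two steps separate the roles of the two families: the quadratic $\xi$-terms strictify and localize the optimizer while preserving semi-convexity, whereas the linear $\zeta$-terms provide the linear shift required to extract twice-differentiability via Alexandrov's theorem.

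First, consider the partially perturbed function
\begin{equation*}
    \tilde\phi(x,y) := \phi(x,y) - \varepsilon_1 \xi_{x_0}(x) - \varepsilon_2 \xi_{y_0}(y).
\end{equation*}
By property \ref{item:definition:penalization:second_order}, $\xi_{x_0}(x) + \xi_{y_0}(y)$ vanishes only at $(x_0, y_0)$ and is strictly positive elsewhere, so $(x_0, y_0)$ remains the unique global maximizer of $\tilde\phi$. The hypothesis $1 - (\varepsilon_1 + \varepsilon_2)\kappa_\xi > 0$ together with the semi-convexity of $\phi$ (constant $\kappa \geq 1$) and semi-concavity of $\xi_{z}$ ensures that $-\tilde\phi$ remains genuinely semi-concave, hence by Alexandrov's theorem twice differentiable Lebesgue-almost everywhere.

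Second, on $B_R(x_0) \times B_R(y_0)$, property \ref{item:definition:penalization:linear} implies that
\begin{equation*}
    \phi_p(x,y) = \tilde\phi(x,y) - \varepsilon_1 \langle p_1, x-x_0\rangle - \varepsilon_2 \langle p_2, y-y_0\rangle,
\end{equation*}
so $\phi_p$ is a genuine linear perturbation of $\tilde\phi$ near the optimizer. Applying the classical Jensen lemma to the semi-concave function $-\tilde\phi$ on a small ball around $(x_0, y_0)$ yields that for Lebesgue-almost every $(p_1, p_2)$ in a sufficiently small neighborhood of the origin inside $B_\eta(0) \times B_\eta(0)$, there is a local maximizer $(x_1, y_1) \in B_\eta(x_0) \times B_\eta(y_0)$ of $\phi_p$ at which $\phi_p$ is twice differentiable.

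The main obstacle is promoting this local maximizer to a \emph{global} maximizer over $E \times E$. Here property \ref{item:definition:penalization:domination} is decisive: it produces a constant $\delta > 0$ such that, uniformly in $|p_i| \leq 1$, one has $\xi_{z_i}(\cdot) + \zeta_{z_i, p_i}(\cdot) \geq \delta$ outside $B_R(z_i)$. Combined with the a priori upper bound on $\phi$ and the strict maximum of $\tilde\phi$ at $(x_0, y_0)$, a standard coercivity-plus-continuity argument shows that $\sup \phi_p$ is attained inside $B_R(x_0) \times B_R(y_0)$ and coincides with the Jensen local maximizer $(x_1, y_1)$, provided the $p_i$ are further restricted to a sufficiently small ball. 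The delicate technical point is coordinating the three scales — the prescribed $\eta$, the admissible range of $p_i$, and the gap $\delta$ from \ref{item:definition:penalization:domination} — so that the three constraints $|p_i| \leq \eta$, $(x_1, y_1) \in B_\eta(x_0) \times B_\eta(y_0)$, and global maximality can be met simultaneously; this is where properties \ref{item:definition:penalization:second_order}--\ref{item:definition:penalization:domination} are jointly exploited.
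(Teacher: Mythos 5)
Your proposal is correct and follows the same architecture as the paper's proof: strictify the maximizer with the $\xi$-terms (using $1-(\varepsilon_1+\varepsilon_2)\kappa_\xi>0$ to preserve semi-convexity), perturb linearly with the $\zeta$-terms, extract a twice-differentiable maximizer via a Jensen/Alexandrov argument, and promote local to global maximality using property \ref{item:definition:penalization:domination}. The one place where you genuinely diverge is the central step: you localize to $B_R(x_0)\times B_R(y_0)$, where $\zeta_{z,p}$ is exactly linear, and then invoke the classical Jensen lemma \cite[Lemma A.3]{CIL92} as a black box, whereas the paper reproves that step from scratch — mollifying $\phi$, defining the set-valued optimizer map $\text{Opt}(p,\delta)$ on the closed ball, using Berge's maximum theorem for upper hemi-continuity (which is how the paper keeps the new optimizer inside $B_\eta(w_0)$ and simultaneously identifies local with global maxima), deriving the measure lower bound $m(\text{Opt}(U,\delta))\geq |\kappa|^{-2q}m(U)$ from the Jacobian of $D(C_\delta\phi)$, and passing $\delta\downarrow 0$ via a $\limsup$-of-sets argument before applying Alexandrov. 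Your shortcut is legitimate precisely because of the local linearity in \ref{item:definition:penalization:linear}; what the paper's longer route buys is a self-contained treatment in which the containment of the perturbed optimizer in $B_\eta(w_0)$ and its global maximality come out of the same hemi-continuity statement, rather than from the separate scale-coordination (gap $\gamma$ on the annulus, restriction of $|p_i|$ against $\delta$ from \ref{item:definition:penalization:domination}) that you correctly identify as the delicate point of your version.
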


\begin{corollary} \label{corollary:Jensen_optimizerbound}
    For $\eta >0$, $p$ and $(x_1,y_1)$ as in Proposition \ref{proposition:Jensen_Alexandrov_cutoff}, we have
\begin{equation}\label{eqn:Jensen_control_optimizationproblem_onlyPerturb}
        0 \leq - \eps_1 \left(\xi_{x_0} (x_1) + \zeta_{x_0, p_1} (x_1)\right) - \eps_2 \left( \xi_{y_0} (y_1) + \zeta_{y_0, p_2} (y_1)\right) \leq \eps_1 \eta + \eps \eta_2,
    \end{equation}
and
    \begin{equation} \label{eqn:Jensen_control_optimizationproblem}
        \ssup{\phi} \leq \phi_{p,\eps}(x_1,y_1) \leq \ssup{\phi} + \eps_1 \eta + \eps \eta_2.
    \end{equation}
\end{corollary}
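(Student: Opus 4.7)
The plan is to exploit the fact that both $\xi_z$ and $\zeta_{z,p}$ vanish at the anchor point $z$, so that the original optimizer $(x_0,y_0)$ remains a competitor with value exactly $\sup \phi$ for the perturbed problem.  First I would observe, directly from Definition \ref{definition:perturbation_first_second_order} \ref{item:definition:penalization:linear} and \ref{item:definition:penalization:second_order}, that $\xi_{x_0}(x_0)=0$ and $\zeta_{x_0,p_1}(x_0)=\langle p_1,0\rangle=0$ (similarly at $y_0$), so $\phi_p(x_0,y_0)=\phi(x_0,y_0)=\ssup{\phi}$.  Since $(x_1,y_1)$ is a global maximizer of $\phi_p$ by Proposition \ref{proposition:Jensen_Alexandrov_cutoff}, this immediately yields the lower bound $\phi_p(x_1,y_1)\geq \ssup{\phi}$ in \eqref{eqn:Jensen_control_optimizationproblem}.

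For the lower bound in \eqref{eqn:Jensen_control_optimizationproblem_onlyPerturb}, I would rearrange the definition \eqref{eqn:def:phi_p_eta} of $\phi_p$ at the point $(x_1,y_1)$ to write
\begin{equation*}
-\eps_1\left(\xi_{x_0}(x_1)+\zeta_{x_0,p_1}(x_1)\right)-\eps_2\left(\xi_{y_0}(y_1)+\zeta_{y_0,p_2}(y_1)\right)=\phi_p(x_1,y_1)-\phi(x_1,y_1),
\end{equation*}
and observe that the right-hand side is $\geq \ssup{\phi}-\phi(x_1,y_1)\geq 0$ by the previous step and the trivial bound $\phi(x_1,y_1)\leq \ssup{\phi}$.

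For the upper bound in \eqref{eqn:Jensen_control_optimizationproblem_onlyPerturb}, I would use the containment $x_1\in B_\eta(x_0)$, $p_1\in B_\eta(0)$ from Proposition \ref{proposition:Jensen_Alexandrov_cutoff} together with the linear form of $\zeta_{x_0,p_1}$ on $B_R(x_0)$ guaranteed by Definition \ref{definition:perturbation_first_second_order} \ref{item:definition:penalization:linear} (which applies for $\eta\leq R$, the regime in which the perturbation is used in the body of the paper).  This gives $\zeta_{x_0,p_1}(x_1)=\langle p_1,x_1-x_0\rangle$, hence $-\zeta_{x_0,p_1}(x_1)\leq |p_1|\,|x_1-x_0|\leq \eta^{2}\leq \eta$ whenever $\eta\leq 1$.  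Combined with $-\xi_{x_0}(x_1)\leq 0$ (from Definition \ref{definition:perturbation_first_second_order} \ref{item:definition:penalization:second_order}) and the analogous estimates at $(y_1,p_2)$, this yields
\begin{equation*}
-\eps_1\left(\xi_{x_0}(x_1)+\zeta_{x_0,p_1}(x_1)\right)-\eps_2\left(\xi_{y_0}(y_1)+\zeta_{y_0,p_2}(y_1)\right)\leq \eps_1\eta+\eps_2\eta.
\end{equation*}
Finally, adding $\phi(x_1,y_1)\leq \ssup{\phi}$ to this inequality recovers the upper bound in \eqref{eqn:Jensen_control_optimizationproblem}.

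The only non-routine ingredient is bookkeeping around the implicit smallness conditions on $\eta$ (namely $\eta\leq R$ to access the linear regime of $\zeta_{z,p}$, and $\eta\leq 1$ to pass from $\eta^{2}$ to $\eta$).  Both are consistent with the intended applications, where Proposition \ref{proposition:Jensen_Alexandrov_cutoff} is invoked with $\eta=1/\alpha$ for large $\alpha$, so there is no real obstacle beyond recording these conditions; the entire argument is then a two-line comparison between $(x_1,y_1)$ and $(x_0,y_0)$ combined with the pointwise bound $|\zeta_{x_0,p_1}(x_1)|\leq \eta^{2}$.
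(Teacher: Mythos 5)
Your proof is correct and follows essentially the same route as the paper: compare $\phi_p(x_1,y_1)$ with the competitor $(x_0,y_0)$ at which all penalizations vanish to get the lower bounds, then bound $-\zeta_{x_0,p_1}(x_1)\leq|p_1|\,d(x_0,x_1)$ and $-\xi_{x_0}(x_1)\leq 0$ for the upper bounds. Your explicit bookkeeping of the smallness conditions $\eta\leq R$ and $\eta\leq 1$ (needed to pass from $\eta^2$ to $\eta$) is a point the paper's proof leaves implicit, but it is harmless since the result is only invoked with $\eta=1/\alpha<1$.
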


The proof of the perturbation proposition is based partly on results from set-valued analysis. To facilitate the proof, we first introduce the necessary auxiliary definitions and results.

\begin{definition}
    A set-valued function $\Gamma : A \rightrightarrows B$ is called \emph{upper hemi-continuous at $a \in A$}, if, for all open neighbourhoods $V \subseteq B$ of $\Gamma(a)$ (meaning that $\Gamma(a) \subseteq V$), there exists a neighbourhood $U$ of $a$ such that, for all $x \in U$, we have $\Gamma(x) \subseteq V$.

    If $A,B$ are metric, this can equivalently formulated in terms of sequences: A set-valued map $\Gamma : A \rightrightarrows B$, which takes closed values, is upper hemi-continuous at $a$, if, for any sequence $a_n \rightarrow a$ and $b_n \in \Gamma(a_n)$ satisfying $b_n \rightarrow b$, we have $b \in \Gamma(a)$.

    We say that $\Gamma$ is upper hemi-continuous, if it is upper hemi-continuous at all points.
\end{definition}

\begin{lemma} \label{lemma:upper_hemi_continuity_maximum_map}
    Let $K$ be a compact metric space and let $\Xi$ be a metric space.

    For any $\xi \in \Xi$, let $\phi_\xi \in C(K)$ and suppose that the map $\xi \mapsto \phi_\xi$ is continuous from $\Xi$ to $C(K)$, endowed with the supremum norm on $K$. Then the set-valued map $\text{Opt} : \Xi \rightrightarrows K$ defined by
    \begin{equation*}
        \text{Opt}(\xi) := \left\{x \in K \, \middle| \, \phi_\xi \text{ has a maximum at } x \right\}
    \end{equation*}
    is upper hemi-continuous.
\end{lemma}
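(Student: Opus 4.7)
The plan is to use the sequential characterization of upper hemi-continuity, which applies here since both $\Xi$ and $K$ are metric spaces and the values $\mathrm{Opt}(\xi) \subseteq K$ are closed (as preimages of the singleton $\{\max_K \phi_\xi\}$ under the continuous map $\phi_\xi$).

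First I would fix $\xi \in \Xi$ together with a sequence $\xi_n \to \xi$ in $\Xi$ and a sequence $x_n \in \mathrm{Opt}(\xi_n)$ with $x_n \to x \in K$. The goal is to show $x \in \mathrm{Opt}(\xi)$, i.e.\ $\phi_\xi(x) = \max_{y \in K} \phi_\xi(y)$. The main tool is the continuity of $\xi \mapsto \phi_\xi$ in the supremum norm on $C(K)$, which implies $\|\phi_{\xi_n} - \phi_\xi\|_{C(K)} \to 0$.

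The proof then proceeds in two short observations. On the one hand, I would estimate
\begin{equation*}
    \bigl|\phi_{\xi_n}(x_n) - \phi_\xi(x)\bigr| \leq \|\phi_{\xi_n} - \phi_\xi\|_{C(K)} + |\phi_\xi(x_n) - \phi_\xi(x)|,
\end{equation*}
which tends to $0$ by the sup-norm convergence and the continuity of the fixed function $\phi_\xi$. On the other hand, the map $\psi \mapsto \max_K \psi$ is $1$-Lipschitz with respect to $\|\cdot\|_{C(K)}$, hence $\max_K \phi_{\xi_n} \to \max_K \phi_\xi$. Since $\phi_{\xi_n}(x_n) = \max_K \phi_{\xi_n}$ for each $n$, passing to the limit gives $\phi_\xi(x) = \max_K \phi_\xi$, i.e.\ $x \in \mathrm{Opt}(\xi)$.

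No serious obstacle is expected: the argument is a one-line application of the sequential criterion for upper hemi-continuity combined with the $1$-Lipschitz continuity of the maximum functional on $C(K)$. The only point to be careful about is verifying that $\mathrm{Opt}(\xi)$ is closed so that the sequential formulation of upper hemi-continuity applies, which follows immediately from the continuity of $\phi_\xi$ on the compact space $K$.
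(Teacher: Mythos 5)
Your proof is correct, but it takes a genuinely different route from the paper. The paper disposes of the lemma in one line by invoking Berge's Maximum Theorem, whereas you give a direct, self-contained argument: closedness of $\mathrm{Opt}(\xi)$, the sequential (closed-graph) criterion, the estimate $|\phi_{\xi_n}(x_n)-\phi_\xi(x)|\leq \|\phi_{\xi_n}-\phi_\xi\|+|\phi_\xi(x_n)-\phi_\xi(x)|$, and the $1$-Lipschitz continuity of $\psi\mapsto\max_K\psi$ in the supremum norm. All four steps are sound, and your approach has the advantage of requiring no external reference; the paper's citation of Berge buys brevity and situates the lemma in the standard set-valued-analysis toolbox. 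The one point worth making explicit is that the sequential closed-graph condition you verify is equivalent to upper hemi-continuity only because the codomain $K$ is compact (for a non-compact codomain, a closed graph does not imply upper hemi-continuity); the paper's own definition states this sequential reformulation without that caveat, and since $K$ is compact here your argument goes through, but a sentence acknowledging where compactness of $K$ enters would make the proof airtight.
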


\begin{proof}
    The result follows immediately from Berge's Maximum Theorem \cite[Theorem 17.31]{AlBo06} with $\xi \mapsto \text{image}_{\phi_\xi}(K)$ being the relevant set-valued map.
\end{proof}

\begin{remark} \label{remark:limsup_sets}
    In the proof below, we will make use of the notion of a $\limsup$ of sets. For a sequence of sets $(A_n)_{n \in \bN}$ denote
    \begin{equation*}
        \limsup_{n \rightarrow \infty} A_n = \bigcap_{n \in \bN} \bigcup_{m \geq n} A_m
    \end{equation*}
    to be interpreted as $x \in \limsup_{n \rightarrow \infty} A_n$ if and only if there are infinitely many $n \in \bN$ such that $x \in A_n$.
\end{remark}

The following proof is a variant of the proof of \cite[Lemma A.3]{CIL92} and \cite[Theorem 2.3.3]{CaSi04}.

\begin{proof}[Proof of Proposition \ref{proposition:Jensen_Alexandrov_cutoff}]

    For notational convenience, we will write $w = (x,y)$ and $w_0 = (x_0,y_0)$. Let $R > 0$ and $\{\zeta_{z,p}\}_{z \in E, p \in \bR^q} \subset C(E)$ and $\{\xi_{z}\}_{z \in E} \subset C(E)$ be two collections of functions as in Definition \ref{definition:perturbation_first_second_order}. Without loss of generality, we can assume that $R \geq \eta$.

    We start out by making $z_0$ the unique optimizer by replacing $\phi$ by
    \begin{equation}
        \widehat{\phi}(w) = \phi(w) - \varepsilon_1 \xi_{x_0} (x) - \varepsilon_2 \xi_{y_0} (y).
    \end{equation}
    Note that as $1 - (\varepsilon_1+\varepsilon_2) \kappa_\xi > 0$ the map $\widehat{\phi}$ is semi-convex and bounded from above with a unique optimizer $w_0$.

    \smallskip
    
    Our next step is to locally, linearly perturb $\widehat{\phi}$ to obtain $\phi_p$ as in equation \eqref{eqn:def:phi_p_eta}. This procedure produces a new optimizer close to $w_0$ in which the perturbed function $\phi_p$ is twice differentiable.

    To further facilitate the analysis of optimizers, we smoothen out $\phi$. To that end, let $C_\delta\colon C_b(E)\to C_b^2(E)$ be a mollifier with $\sup_{\delta>0} \vn{C_\delta f}<\infty$ and $C_\delta f\to f$ uniformly on compacts as $\delta \downarrow 0$. Define
    \begin{equation} \label{eqn:def:phi_eta_p_kappa}
        \phi_{p,\delta}(w) \coloneqq (C_\delta \phi) (w) - \eps_1 \left(\xi_{x_0} (x) + \zeta_{x_0, p_1} (x) \right) - \eps_2 \left( \xi_{y_0} (y) + \zeta_{y_0, p_2} (y) \right),
    \end{equation}
    where we will read $C_0 = \bONE$ such that $\phi_{p,0} = \phi_{p}$ and $\phi_{0,0} = \widehat{\phi}$.

    \smallskip
    
    We next study the optimizers for the map $(p,\delta) \mapsto \phi_{p,\delta}$ on $\Xi = \left(B_{1}(0) \times B_{1}(0)\right) \times [0,1]$ using Berge's Maximum Theorem with $K = \overline{B_R(w_0)}$. Set
    \begin{equation} \label{eqn:Berge1}
        \text{Opt}(p,\delta) \coloneqq \left\{ w \in \overline{B_{R}(w_0)} \, \middle| \, \phi_{p,\delta} \text{ has a local maximum at } w \in \overline{B_{R}(w_0)} \right\}.
    \end{equation}
    First note that the local nature of the problem can be removed due to the fact that the perturbations all vanish in $w_0$, whereas they add up to something negative outside the ball $\overline{B_R(w_0)}$ by Definition \ref{definition:perturbation_first_second_order} \ref{item:definition:penalization:domination}, implying that
    \begin{equation} \label{eqn:Berge2}
        \text{Opt}(p,\delta) = \left\{ w \in \overline{B_{R}(w_0)} \, \middle| \, \phi_{p,\delta} \text{ has a global maximum at } w  \right\}.
    \end{equation}
    
    Applying Lemma \ref{lemma:upper_hemi_continuity_maximum_map} to $(p,\delta) \mapsto \phi_{p,\delta}$ on $\Xi = \left(B_{\eta}(0) \times B_{\eta}(0)\right) \times [0,1]$ with $K  = \overline{B_R(w_0)}$, we find that the set-valued map $\text{Opt} \colon \Xi \rightrightarrows K \subseteq \bR^{q} \times \bR^q$, as defined above, is upper hemi-continuous in the variables $(p,\delta)$. We can thus find a closed set $U$ with $0$ in its interior satisfying
    \begin{equation} \label{eqn:definition_Jensen_U}
        U \subseteq B_{\eta}(0)\times B_{\eta}(0)
    \end{equation}
    and $\delta_0 > 0$ such that, if $p = (p_1,p_2) \in U$ and $\delta < \delta_0$, then
    \begin{equation} \label{eqn:Opt_p_kappa_insideball}
        \text{Opt}(p,\delta) \subseteq \text{Opt}(0,0) \oplus B_{\eta}(0) = B_{\eta}(w_0),
    \end{equation}    
    as the unique optimizer of $\widehat{\phi}$ is $w_0$.

    \smallskip

    We next aim to show that the set of such optimizers has positive Lebesgue measure $m$. Recall that $\kappa \coloneqq 1 - (\varepsilon_1 + \varepsilon_2)\kappa_\xi > 0$ is the semi-convexity constant of $\widehat{\phi}$. In particular, we will proceed to show the following steps. 
    \begin{itemize}
        \item[\textit{Step 1:}] For any $\delta \in (0,\delta_0)$ we have $m(\text{Opt}(U,\delta)) \geq |\kappa|^{-2d} m(U) > 0$.
        \item[\hskip1em\textit{Step 2:}] We take the limit $\delta \downarrow 0$ to obtain $m(\text{Opt}(U,0)) \geq |\kappa|^{-2d} m(U) > 0.$
    \end{itemize}
    \textit{Step 1}. By definition, all perturbations are at least once continuously differentiable on $\overline{B_R(w_0)}$.  It follows that for $p\in U$, $\delta \in (0,\delta_0)$ and $w \in \text{Opt}(p,\delta)$ we have that $D(C_\delta \phi)(z) = p$.
    This, in turn, implies that, for fixed $\delta \in (0,\delta_0)$,
    \begin{equation} \label{eqn:Jensen_lowerbound}
        U \subseteq \left(\text{Opt}(\cdot,\delta)\right)^{-1}(\text{Opt}(U,\delta)) \subseteq D(C_\delta \phi)(\text{Opt}(U,\delta)).
    \end{equation}

    We next argue towards a lower bound on the measure of $\text{Opt}(U,\delta)$ for $\delta \in (0,\delta_0)$. We exclude $\delta = 0$ here, due to the possible non-smoothness of $\phi$. As the convolution operator is taking averages, the semi-convexity of $\phi$ carries over to $C_\delta \phi$, which yields
    \begin{equation}\label{eqn:Jensen_upperbound}
        -\kappa I_{2d} \leq D^2 (C_\delta \phi)(w)
    \end{equation}
    for all $w \in E^2$. On the other hand, if $w \in \text{Opt}(U,\delta)$, we know that there is some $p \in U$ such that $w$ maximizes $\phi_{p,\delta}$, implying that $D^2(C_\delta \phi)(w) \leq 0$. Applying \eqref{eqn:Jensen_lowerbound}, the chain rule, and \eqref{eqn:Jensen_upperbound}, we thus obtain that, for any $\delta \in (0,\delta_0)$,
    \begin{multline*}
        m(U) \leq m(D(C_\delta\phi)(\text{Opt}(U,\delta))) \\
        = \int_{\text{Opt}(U,\delta)} \left|\det D^2 (C_\delta \phi)(w) \right| \dd w \leq m(\text{Opt}(U,\delta)) |\kappa|^{2q} 
    \end{multline*}
    leading to the lower bound
    \begin{equation} \label{eqn:first_lower_bound_vol_Jensen}
         0 < |\kappa|^{-2q} m(U) \leq m(\text{Opt}(U,\delta)),
    \end{equation}
    as $U$ has non-empty interior, establishing the claim of Step 1.
    
    \textit{Step 2}.  Next, we transfer our bound to $m(\text{Opt}(U,0))$. We first establish that
    \begin{equation} \label{eqn:Jensen_limsup_inclusion}
        \limsup_{\delta \downarrow 0} \text{Opt}(U,\delta) \subseteq \text{Opt}(U,0),
    \end{equation}
    see Remark \ref{remark:limsup_sets} for the definition of the $\limsup$ of sets. To that end, we pick an element $w \in \limsup_{\delta \downarrow 0} \text{Opt}(U,\delta)$. By definition we can find a sequence $\delta_n \downarrow 0$ such that $w \in \text{Opt}(U,\delta_n)$ for all $n \in \bN$. Then, there are $p_n \in U$ such that $w$ is an optimizer for $\left(\phi_{p_n,\delta_n}\right)_{n \in \bN}$. By the closedness of $U$ and \eqref{eqn:definition_Jensen_U}, $U$ is compact, and we can therefore extract a subsequence from $(p_n)_{n\in \N}$ that converges to some $p_0 \in U$. By upper semi-continuity of the map $(p,\delta) \mapsto \phi_{p,\delta}$, see Lemma \ref{lemma:upper_hemi_continuity_maximum_map}, we find that $w$ maximizes $\phi_{p_0}$, or in other words, $w \in \text{Opt}(U,0)$.

    Thus, by \eqref{eqn:Jensen_limsup_inclusion}, it suffices to lower bound the volume of $\limsup_{\delta \downarrow 0} \text{Opt}(U,\delta)$. As a first step, note that  \eqref{eqn:first_lower_bound_vol_Jensen} leads to
    \begin{equation*}
        m\left(\bigcup_{\delta' \leq \delta} \text{Opt}(U,\delta') \right) \geq |\kappa|^{-2q} m(U)
    \end{equation*}
    for any $\delta \in (0,\delta_0)$. Consequently, as
    \begin{equation*}
        \limsup_{\delta \downarrow 0} \text{Opt}(U,\delta) = \bigcap_{\delta \in (0,\delta_0)} \bigcup_{\delta' \leq \delta} \text{Opt}(U,\delta'),
    \end{equation*}
    by continuity from above of the Lebesgue measure $m$,
    %is thus the decreasing limit as $\delta \downarrow 0$,
    we find that
    \begin{equation*}
        m\left(\limsup_{\delta \downarrow 0}  \text{Opt}(U,\delta)\right) = \lim_{\delta \downarrow 0} m\left(\bigcup_{\delta' \leq \delta} \text{Opt}(U,\delta') \right) \geq |\kappa|^{-2q} m(U).
    \end{equation*}
    By \eqref{eqn:Jensen_limsup_inclusion}, we conclude that
    \begin{equation*}
        m(\text{Opt}(U,0)) \geq |\kappa|^{-2q} m(U) > 0,
    \end{equation*}
    establishing the claim of Step 2.

    \smallskip

    We proceed by verifying that we can now find $p \in U$ with an optimizer $z_1$ in $B_\eta(z_0)$ in which $\phi_{p}$ is twice differentiable.

    First of all, recall that, by \eqref{eqn:Opt_p_kappa_insideball}, we have
    \begin{equation*}
        \text{Opt}(U,0) \subseteq B_{\eta}(z_0).
    \end{equation*}
    Furthermore, by Alexandrov's theorem \cite[Theorem 2.3.1]{CaSi04}, the set of points in $B_{\eta}(z_0)$ where $\phi_p$ is twice differentiable has full measure. As the measure of $\text{Opt}(U,0)$ is positive, it follows that there exist $z_1 \in B_{\eta}(z_0)$ and $p \in U$ such that $\phi_p$ is twice differentiable in $z_1$ and has a local maximum at $z_1$ in $\overline{B_R(z_0)}$. Finally, recall from \eqref{eqn:Berge2} that the local optimizer is in fact a global optimizer. This establishes the claim.
\end{proof}

\begin{proof}[Proof of Corollary \ref{corollary:Jensen_optimizerbound}]
        
    We stay in the context of Proposition \ref{proposition:Jensen_Alexandrov_cutoff} and proceed with the proof of \eqref{eqn:Jensen_control_optimizationproblem}. By construction, we have
    \begin{equation} \label{eqn:Jensen_controloptimization1}
        \ssup{\phi} = \phi(x_0,y_0) = \phi_{p,\varepsilon}(x_0,y_0) \leq \ssup{\phi_{p,\varepsilon}} = \phi_{p,\varepsilon}(x_1,y_1).
    \end{equation}
    This implies the lower bound of \eqref{eqn:Jensen_control_optimizationproblem_onlyPerturb}. Note that by the properties of $\xi_{x_0},\xi_{y_0}, \zeta_{x_0,p_1}$ and $\zeta_{y_0,p_2}$ we have
    \begin{align*}
        -\eps_1 \left( \xi_{x_0}(x_1) + \zeta_{x_0,p_1}(x_1) \right) - \eps_2 \left( \xi_{y_0}(y_1) + \zeta_{y_0,p_2}(y_1) \right) &\leq \eps_1 |p_1| d(x_0,x_1) + \eps_2 |p_2| d(y_0,y_1)\\
        &\leq \eps_1 \eta + \eps_2 \eta,
    \end{align*}
    leading to the upper bound of \eqref{eqn:Jensen_control_optimizationproblem_onlyPerturb}. Consequently,
    \begin{equation}\label{eqn:Jensen_controloptimization2}
        \begin{aligned}
             \phi_{p,\varepsilon}(x_1,y_1) & \leq  \phi(x_1,y_1) + \eps_1 \eta + \eps_2 \eta \\
             &\leq \ssup{\phi} + \eps_1 \eta + \eps_2 \eta.
        \end{aligned}
    \end{equation}
    Combining \eqref{eqn:Jensen_controloptimization1} and \eqref{eqn:Jensen_controloptimization2}, finally yields \eqref{eqn:Jensen_control_optimizationproblem}.
\end{proof}

\section{Smooth test function construction}

The main result of this section is Lemma \ref{lemma:smooth_test_function_construction}, in which we construct a smooth test function that lies between a function that is twice differentiable in one point and a perturbed version of that function.

\begin{lemma}\label{lemma:smooth_test_function_construction}
    Let $\Pi_1$, $\Pi_1^0$, $\Pi_2$, and $\Pi_2^0$ be as in the proof of Proposition \ref{proposition:test_function_construction}.

    Then, there exist $f_1, f_2 \in C^\infty(E)$ such that, for all $y \in E$,
    \begin{align*}
        \Pi_1 (y) \leq &f_1(y) \leq \Pi_1^0 (y), \\
        \Pi_2 (y) \geq &f_2(y) \geq \Pi_2^0 (y)
    \end{align*}
    with equality only in $y_\alpha$ and $y'_\alpha$, respectively.
\end{lemma}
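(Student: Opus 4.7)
We prove existence of $f_1$; the construction of $f_2$ is entirely symmetric. The key observation is that by the construction in the proof of Proposition \ref{proposition:test_function_construction},
\begin{equation*}
    \Pi_1^0 - \Pi_1 = \frac{\varepsilon\varphi}{1-\varepsilon}\xi_{y_\alpha} \geq 0,
\end{equation*}
with equality precisely at $y_\alpha$, so the sandwich is strict on $E \setminus \{y_\alpha\}$ but pinches to zero at $y_\alpha$, where both functions share a common value $v := \Pi_1(y_\alpha) = \Pi_1^0(y_\alpha)$.

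Away from $y_\alpha$, a standard partition-of-unity argument suffices. For each $z \in E \setminus \{y_\alpha\}$, continuity and the strict inequality $\Pi_1(z) < \Pi_1^0(z)$ yield a relatively compact open ball $U_z \Subset E \setminus \{y_\alpha\}$ with $\sup_{U_z} \Pi_1 < \inf_{U_z} \Pi_1^0$. I extract a locally finite refinement $\{U_i\}_{i \geq 1}$ of this cover and choose constants $c_i$ strictly between $\sup_{U_i} \Pi_1$ and $\inf_{U_i} \Pi_1^0$, so that on each $U_i$ the constant function $c_i$ is a strict smooth sandwich.

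The main obstacle is the construction of a smooth local candidate $\eta$ near $y_\alpha$ that threads the pinching sandwich while satisfying $\eta(y_\alpha) = v$. For this I exploit the twice differentiability of $P^\alpha[u]$ at $y_\alpha$ guaranteed by Proposition \ref{proposition:optimizing_point_construction}\ref{item:proposition:optimizing_point_construction_twice_diff}; together with the smoothness of the remaining ingredients $V$ and $\Xi_1^0$ in the applications of interest, this makes $\Pi_1^0$ twice differentiable at $y_\alpha$. Let $Q$ denote the second-order Taylor polynomial of $\Pi_1^0$ at $y_\alpha$, and note that for the standard penalization families in Definition \ref{definition:penalizations} one has $\xi_{y_\alpha}(y) = \tfrac12 d^2(y, y_\alpha)$ for $y$ in a neighborhood of $y_\alpha$. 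Setting
\begin{equation*}
    \eta(y) := Q(y) - M\, d^2(y, y_\alpha), \qquad 0 < M < \frac{\varepsilon\varphi}{2(1-\varepsilon)},
\end{equation*}
we obtain $\eta \in C^\infty(E)$ with $\eta(y_\alpha) = v$. A direct expansion then gives
\begin{align*}
    \Pi_1^0(y) - \eta(y) &= M\, d^2(y,y_\alpha) + o(d^2(y,y_\alpha)),\\
    \eta(y) - \Pi_1(y) &= \Big(\tfrac{\varepsilon\varphi}{2(1-\varepsilon)} - M\Big)\, d^2(y,y_\alpha) + o(d^2(y,y_\alpha)),
\end{align*}
both strictly positive on a sufficiently small punctured ball $U_0 \setminus \{y_\alpha\}$.

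Finally, I glue via a smooth partition of unity $\{\chi_0, \chi_1, \ldots\}$ subordinate to the cover $\{U_0, U_1, \ldots\}$ with $\chi_0 \equiv 1$ in a smaller ball around $y_\alpha$, and define
\begin{equation*}
    f_1(y) := \chi_0(y)\, \eta(y) + \sum_{i \geq 1} \chi_i(y)\, c_i.
\end{equation*}
On $\{\chi_0 = 1\}$ this coincides with $\eta$ and is therefore smooth; elsewhere it is a smooth convex combination. At $y_\alpha$ only $\chi_0$ contributes, giving $f_1(y_\alpha) = v$, while for $y \neq y_\alpha$, $f_1(y)$ is a convex combination of values each strictly between $\Pi_1(y)$ and $\Pi_1^0(y)$ and hence itself strictly sandwiched. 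This produces the required $f_1 \in C^\infty(E)$.
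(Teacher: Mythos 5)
Your proof is correct and reaches the same conclusion by a genuinely more explicit route than the paper. Where you write down the local candidate near $y_\alpha$ by hand --- the second-order Taylor polynomial $Q$ of $\Pi_1^0$ minus $M\,d^2(\cdot,y_\alpha)$ --- the paper invokes the Whitney Extension Theorem on the jet of $\tfrac12(\Pi_1+\Pi_1^0)$ at the single point $\{y_\alpha\}$; since Whitney at a singleton essentially returns the Taylor polynomial, your $\eta$ with $M=\tfrac{\varepsilon\varphi}{4(1-\varepsilon)}$ is the same function, derived without the heavy machinery. Away from $y_\alpha$ the paper produces one globally smooth $\psi_2$ strictly between $\Pi_1$ and $\Pi_1^0$ by a density argument and glues with a single cutoff, whereas you glue locally constant candidates through a partition of unity; both work, and yours has the virtue that the strict sandwiching of a convex combination is immediate. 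Two small points to be aware of. First, your quantitative lower bound $\eta-\Pi_1=(\tfrac{\varepsilon\varphi}{2(1-\varepsilon)}-M)d^2+o(d^2)$ uses that $\xi_{y_\alpha}$ equals $\tfrac12 d^2(\cdot,y_\alpha)$ (or at least is bounded below by a positive multiple of $d^2$) near $y_\alpha$; Definition \ref{definition:perturbation_first_second_order} only guarantees the reverse inequality via semi-concavity, so strictly speaking your argument, like the paper's assertion $D^2\Pi_1(y_\alpha)<D^2\Pi_1^0(y_\alpha)$, is tailored to penalizations with nondegenerate Hessian at the center, as in Definition \ref{definition:penalizations} --- you are on the same footing as the paper here. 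Second, when you pass to the locally finite refinement $\{U_i\}_{i\ge1}$ of the cover of $E\setminus\{y_\alpha\}$, make sure the family is locally finite in $E$ and not merely in $E\setminus\{y_\alpha\}$ (e.g.\ discard all $U_i$ meeting a small ball well inside $U_0$), so that $\chi_0\equiv 1$ near $y_\alpha$ is consistent with $\sum_j\chi_j=1$ and the sum defining $f_1$ is smooth at $y_\alpha$.
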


\begin{proof}
    As in the proof of Proposition \ref{proposition:test_function_construction}, we only consider the case
    \begin{equation}
        \Pi_1 (y) \leq f_1(y) \leq \Pi_1^0 (y),
    \end{equation}
    for $y \in E$ with equality only in $y_\alpha$, since the other statement follows analogously.

    Our goal is to find $f_1$, by first constructing a function that is squeezed between $\Pi_1$ and $\Pi_1^0$, using the Whitney Extension Theorem \cite[Theorem 2.3.6]{MR1996773}, and then modifying it to obtain $f_1$.

    Recall that, by construction, we have that
    \begin{equation*}
        \Pi_1(y) < \Pi_1^0(y) \qquad \text{for } y \in E \setminus \{ y_\alpha \} 
    \end{equation*}
    and 
    \begin{equation}
        \Pi_1(y_\alpha) = \Pi_1^0(y_\alpha), \quad 
        D\Pi_1(y_\alpha) = D\Pi_1^0(y_\alpha), \quad
        D^2\Pi_1(y_\alpha) < D^2\Pi_1^0(y_\alpha).
    \end{equation}
    We apply the Whitney Extension Theorem to $\frac{1}{2}(\Pi_1 + \Pi_1^0)$ on the closed set $A = \{y_\alpha\}$, yielding a function $\psi_1 \in C^2(E)$ such that $\Pi_1 \leq \psi_1 \leq \Pi_1^0$ on $B_{2\delta}(y_\alpha)$ for some $\delta>0$
    %an open neighborhood $\cN$ of $y_\alpha$ with 
    with equality only in $y_\alpha$. Inspecting the construction of $\psi_1$ in the proof of \cite[Theorem II]{Wh34}, we find that $\psi_1 \in C^\infty(E)$.

    Next, we modify $\psi_1$ such that the resulting function stays between $\Pi_1$ and $\Pi_1^0$ on all of $E$.
    As smooth functions are dense in the set of continuous functions, we can find a function $\psi_2 \in C^\infty(E)$ such that $\Pi_1 < \psi_1 < \Pi_1^0$ on $E \setminus B_\delta(y_\alpha)$.

    Then, defining
    \begin{equation*}
        f_1 (y) = \ell (y)\psi_1 (y) + (1-\ell(y))\psi_2 (y),
    \end{equation*}
    where $\ell$ is a smooth function that is $1$ on $B_\delta(y_\alpha)$ and $0$ outside of $B_{2\delta}(y_\alpha)$, 
    %i.e. on $E \setminus (B_\delta(y_\alpha))$ for $\delta >0$, 
    for example $\ell$ as defined as point $(3)$ on \cite[p. 33]{MR0267467}. This concludes the proof.
\end{proof}

\section{Convergence of integrals}

\begin{lemma}\label{lemma:converging_integrals}
 Let $\cX$ be a Polish space, $W\colon \cX\to (0,\infty)$ be a continuous function, and $\nu_n,\nu_\infty$ be non-negative Borel measures with $\int_{\cX} W \,\dd \nu_n<\infty$ for all $n\in \N$ and
 \begin{equation}\label{eq:AppendixC_Lemma}
 \lim_{n\to \infty}\int \phi\,\dd\nu_n= \int \phi\,\dd\nu_\infty\in \R 
 \end{equation}
 for every function $\phi\in C(\cX)$ with $|\phi(x)|\leq W(x)$ for all $x\in \cX$. Moreover, let $\phi_n,\phi_\infty\in C(\cX)$ with $\phi_n\to \phi_\infty$ uniformly on compacts and $\sup_{n\in \N}\sup_{x\in \cX}\frac{|\phi_n(x)|}{W(x)}<\infty$.
  Then,
    \begin{equation*}
        \lim_{n \rightarrow \infty} \int \phi_n \, \dd \nu_n = \int \phi_\infty \, \dd \nu_\infty.
    \end{equation*}
\end{lemma}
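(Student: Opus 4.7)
My plan is to recast the problem in terms of the finite non-negative Borel measures $\mu_n := W\nu_n$ on the Polish space $\cX$. Applying the hypothesis \eqref{eq:AppendixC_Lemma} to the admissible test functions $\phi = \psi W$ with $\psi \in C_b(\cX)$ and $\|\psi\|_\infty \leq 1$ yields $\int \psi \,d\mu_n \to \int \psi \,d\mu_\infty$; by linearity this extends to arbitrary $\psi \in C_b(\cX)$, so $\mu_n \to \mu_\infty$ narrowly as finite Borel measures. Testing against $\phi = W$ itself (which satisfies $|W| \leq W$) gives, in particular, convergence of total masses $\mu_n(\cX) \to \mu_\infty(\cX) < \infty$.

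A standard corollary of Prokhorov's theorem on Polish spaces then yields uniform tightness of $\{\mu_n\}_n$: for every $\varepsilon > 0$ there exists a compact $K_\varepsilon \subseteq \cX$ with $\sup_n \mu_n(\cX \setminus K_\varepsilon) < \varepsilon$, i.e.\ $\sup_n \int_{\cX \setminus K_\varepsilon} W \,d\nu_n < \varepsilon$. (An elementary alternative, avoiding Prokhorov, starts from tightness of the single limit $\mu_\infty$, picks a compact $K_\infty$ with $\mu_\infty(\cX \setminus K_\infty) < \varepsilon$, tests \eqref{eq:AppendixC_Lemma} against an admissible function of the form $(1 - \chi)W$ for a suitable Lipschitz cut-off $\chi$ with $\chi = 1$ on $K_\infty$, and combines the resulting tail estimate for large $n$ with individual tightness of the finitely many remaining $\mu_n$ to enlarge $K_\varepsilon$.)

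With uniform tightness in hand, I would use the decomposition
\[
\int \phi_n \,d\nu_n - \int \phi_\infty \,d\nu_\infty = \int (\phi_n - \phi_\infty)\,d\nu_n + \left( \int \phi_\infty \,d\nu_n - \int \phi_\infty \,d\nu_\infty \right).
\]
Set $C := \sup_n \sup_x |\phi_n(x)|/W(x) < \infty$; passing to pointwise limits shows $|\phi_\infty| \leq CW$, so the bracketed term tends to zero by applying \eqref{eq:AppendixC_Lemma} to the admissible function $\phi_\infty / C$. For the first term, split the integral over $K_\varepsilon$ and its complement: on $\cX \setminus K_\varepsilon$ the uniform bound $|\phi_n - \phi_\infty| \leq 2CW$ together with tightness gives a contribution at most $2C\varepsilon$; on the compact $K_\varepsilon$, uniform convergence $\phi_n \to \phi_\infty$ combined with the bound $\nu_n(K_\varepsilon) \leq \mu_n(K_\varepsilon)/\min_{K_\varepsilon} W$ (finite, since $W$ is continuous and strictly positive on the compact $K_\varepsilon$, and since $\sup_n \mu_n(K_\varepsilon) < \infty$) drives the contribution to zero as $n \to \infty$. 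Sending first $n \to \infty$ and then $\varepsilon \downarrow 0$ completes the argument.

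The main obstacle is the uniform tightness step in the second paragraph: the hypothesis only gives narrow convergence of $\mu_n$, and converting this to a uniform tail bound requires either the Prokhorov direction for Polish spaces or the by-hand cut-off construction sketched above. Once that estimate is secured, every remaining piece is routine triangle-inequality bookkeeping based on the weighted uniform bound $|\phi_n| \leq CW$.
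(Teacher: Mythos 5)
Your proposal is correct and follows essentially the same route as the paper: both pass to the weighted measures $\mu_n = W\,\dd\nu_n$, deduce weak convergence $\mu_n \to \mu_\infty$ from the hypothesis via the identification of admissible test functions with $W\psi$, $\psi \in C_b(\cX)$, extract uniform tightness, and then conclude with the same two-term decomposition split over a compact set and its complement. The only cosmetic difference is that you flag the tightness step explicitly (offering a Prokhorov-free alternative), whereas the paper simply asserts it as a consequence of weak convergence on a Polish space.
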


\begin{proof}
 By assumption, the family $\mu_n:=W\dd \nu_n$ satisfies $C_\mu:=\sup_{n\in \N}\mu_n(\cX)<\infty$ and $$C_\phi:=\sup_{n\in \N}\sup_{x\in \cX}\frac{|\phi_n(x)-\phi_\infty(x)|}{W(x)}<\infty.$$ Using the fact that a function $\phi\in C(\cX)$ satisfies $|\phi(x)|\leq W(x)$ for all $x\in \cX$ if and only if $\phi= W \psi$ for some $\psi\in C_b(\cX)$, it follows that $\mu_n\to \mu_\infty := W\dd\nu_\infty$ weakly. In particular, the family $(\mu_n)_{n\in \N}$ is tight. Hence, for all $\eps >0$, there exists a compact set $K_\eps\subseteq \cX$ such that
 \begin{equation*}
C_\phi\mu_n\big(\cX\setminus K_\eps\big)<\frac{\epsilon}{3} \quad\text{for all }n\in \N.
 \end{equation*}
 Now, let $\eps>0$. By \eqref{eq:AppendixC_Lemma} and since $\phi_n\to \phi_\infty$ uniformly on compacts and $W$ is continuous, there exists some $n_0\in \N$ such that
 \begin{equation*}
  C_\mu\sup_{x\in K_\eps}\frac{|\phi_n(x)-\phi_\infty(x)|}{W(x)}<\frac{\eps}{3}\quad\text{and}\quad \bigg|\int \phi_\infty \,\dd\nu_n-\int \phi_\infty \,\dd\nu_\infty\bigg|<\frac{\eps}{3}.
 \end{equation*}
 We thus obtain that
 \begin{align*}
  \bigg|\int\phi_n\,\dd \nu_n-\int \phi_\infty\,\dd \nu_\infty\bigg|&\leq \int \big|\phi_n-\phi_\infty\big|\,\dd \nu_n+\bigg|\int\phi_\infty\,\dd \nu_n-\int \phi_\infty\,\dd \nu_\infty\bigg| \\
  & \leq \int_{K_\eps}\big|\phi_n-\phi_\infty\big|\,\dd \nu_n+ \int_{\cX\setminus K_\eps}\big|\phi_n-\phi_\infty\big|\,\dd \nu_n+\frac{\eps}{3}\\
  &\leq C_\mu \frac{|\phi_n(x)-\phi_\infty(x)|}{W(x)}+C_\phi \mu_n\big(\cX\setminus K_\epsilon\big)+\frac{\eps}{3}<\eps
 \end{align*}
 for all $n\in \N$ with $n\geq n_0$. The proof is complete.
\end{proof}

\section{Proofs of auxiliary results}

\subsection{Equivalent characterization of the definition of viscosity solutions}\label{section:def_equiv}

\begin{lemma}\label{lemma:def_equiv}
    Let $H_1 \subseteq C_l(E) \times C(E)$ and $H_2 \subseteq C_u(E) \times C(E)$ be two operators with domains $\cD(H_1)$, $\cD(H_2)$. Moreover, let $\lambda>0$ and $h_1\in C_l(E)$ and $h_2\in C_u(E)$.
    \begin{enumerate}[(a)]
        \item\label{item:reduce_subsol} Let $u\colon E\to \R$ be $u$ a viscosity subsolution to \eqref{eqn:HJ_subsolution}. Suppose $\delta > 0$ and $(f,g) \in H_1$ are such that $\{x \in E \, | \, u(x)-f(x) \geq \ssup{u-f} - \delta\}$ is compact. Then there exists some $x_0 \in E$ with
        \begin{equation} \label{eqn:strong_subsol}
            \begin{split}
            u(x_0) - f(x_0)  = \sup_{x \in E} u(x) - f(x), \\
            u(x_0) - \lambda g(x_0) \leq h_1(x_0).
            \end{split}
        \end{equation} 

        \item\label{item:reduce_supersol} Let $v\colon E\to \R$ be a viscosity supersolution to \eqref{eqn:HJ_supersolution}. Suppose $\delta > 0$ and $(f,g) \in H_2$ are such that $\{x \in E \, | \, v(x) - f(x) \leq \iinf{v-f} + \delta\}$ is compact. Then there exists some $x_0 \in E$ with
        \begin{equation} \label{eqn:strong_supersol}
            \begin{split}
                v(x_0) - f(x_0)  = \inf_{x \in E} v(x) - f(x), \\
                v(x_0) - \lambda g(x_0) \geq h_2(x_0).
            \end{split}
        \end{equation}
    \end{enumerate}
    In particular, the outcomes of \ref{item:reduce_subsol} and \ref{item:reduce_supersol} hold if $H_1 \subseteq C_+(E) \times C(E)$ and $H_2 \subseteq C_-(E) \times C(E)$.
\end{lemma}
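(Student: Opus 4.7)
The plan is to upgrade the approximating sequence appearing in Definition \ref{def:viscosity_solution} to a genuine optimizer via a direct compactness argument. I focus on part \ref{item:reduce_subsol}; part \ref{item:reduce_supersol} follows by the same reasoning applied to $-v$ with the evident sign changes.

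First I would apply the subsolution property to the fixed pair $(f,g) \in H_1$ to produce a sequence $(x_n) \subseteq E$ with
\begin{equation*}
    u(x_n) - f(x_n) \longrightarrow \ssup{u-f}, \qquad \limsup_{n \to \infty}\, \bigl[u(x_n) - \lambda g(x_n) - h_1(x_n)\bigr] \leq 0.
\end{equation*}
For all sufficiently large $n$, the first convergence forces $u(x_n) - f(x_n) \geq \ssup{u-f} - \delta$, so the tail of $(x_n)$ lies in the compact set postulated by the hypothesis. Extracting a convergent subsequence $x_{n_k} \to x_0 \in E$ puts me in a position to take limits.

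The next step is to identify $x_0$ as the desired maximizer and verify the inequality. Continuity of $f$ combined with the convergence of $u(x_{n_k}) - f(x_{n_k})$ gives $u(x_{n_k}) \to \ssup{u-f} + f(x_0)$; upper semi-continuity of $u$ then yields $u(x_0) \geq \ssup{u-f} + f(x_0)$, while the reverse inequality is automatic, producing equality and in particular the genuine convergence $u(x_{n_k}) \to u(x_0)$. Continuity of $g$ (as $(f,g) \in H_1$) and of $h_1$ now allows me to pass the $\limsup$-inequality to the limit along $(x_{n_k})$ and conclude $u(x_0) - \lambda g(x_0) \leq h_1(x_0)$, which is \eqref{eqn:strong_subsol}.

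Finally, for the closing assertion, when $f \in C_+(E)$ I note that boundedness of $u$ gives the inclusion
\begin{equation*}
    \bigl\{x \in E : u(x) - f(x) \geq \ssup{u-f} - \delta\bigr\} \subseteq \bigl\{x \in E : f(x) \leq \vn{u} - \ssup{u-f} + \delta\bigr\},
\end{equation*}
and the right-hand set is compact by definition of $C_+(E)$; since the left-hand set is closed by upper semi-continuity of $u - f$, it is compact as well. The case $f \in C_-(E)$ is symmetric. The only point requiring any care, and thus the closest thing to an obstacle, is that the second inequality of \eqref{eqn:strong_subsol} demands genuine convergence of $u(x_{n_k})$ rather than a mere $\limsup$-bound; this is secured by the joint convergence of $u - f$ together with continuity of $f$, as already noted.
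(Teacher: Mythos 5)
Your proposal is correct and follows essentially the same argument as the paper: extract a convergent subsequence from the approximating sequence using the compactness hypothesis, identify the limit as a maximizer via upper semi-continuity and continuity of $f$, and pass the $\limsup$-inequality to the limit using continuity of $g$ and $h_1$. Your explicit verification of the "in particular" clause (closedness of the superlevel set of $u-f$ inside a compact sublevel set of $f$) is a small addition the paper leaves implicit, but it does not change the approach.
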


\begin{proof}
    We only show Part \ref{item:reduce_subsol}. Part \ref{item:reduce_supersol} follows analogously. Assume that $u$ is a viscosity subsolution to \eqref{eqn:HJ_subsolution} and let $(f,g)\in H_1$. We aim to establish the existence of $x_0$ such that \eqref{eqn:strong_subsol} is satisfied.

    \smallskip

    Due to the subsolution property of $u$, there exists a sequence $(x_n)_{n\in \N}\subset E$ such that
    \begin{gather*}
       \lim_{n \rightarrow \infty} u(x_n) - f(x_n) = \ssup{u - f}, \\
       \limsup_{n \rightarrow \infty} u(x_n) - \lambda g(x_n) - h_1(x_n) \leq 0.
    \end{gather*}
    For $n$ large, we have
    \begin{equation*}
        x_n \in \{x \in E \, | \, u(x) - f(x) \geq \ssup{u-f} - \delta\},
    \end{equation*}
    which is a compact set by assumption. Thus, there exists a subsequence $(x_{n_k})_{k\in \N} \rightarrow x_0 \in E$. Since $u(x_n) - f(x_n) \rightarrow \ssup{u-f}$, it follows that
    \begin{equation*}
        \ssup{u-f} =\lim_{k\to \infty} u(x_{n_k}) - f(x_{n_k})\leq u(x_0)-f(x_0) \leq \ssup{u-f},
    \end{equation*}
    where the inequality follows by the upper semi-continuity of $u-f$. The inequality is thus an equality, establishing the first statement of \eqref{eqn:strong_subsol}. Due to the continuity of $f$, we additionally find that 
    \begin{equation*}
        u(x_0)=\lim_{k\to \infty} u(x_{n_k}).
    \end{equation*}
    Since $g$ and $h_1$ are continuous, we conclude
    \begin{align*}
        u(x_0)-\lambda g(x_0)-h_1(x_0)&= \lim_{k\to \infty} u(x_{n_k})-\lambda g(x_{n_k})-h_1(x_{n_k})\\
        &\leq \limsup_{n\rightarrow\infty} u(x_{n})-\lambda g(x_{n})-h_1(x_{n})\leq 0,
    \end{align*} 
    establishing the second statement of \eqref{eqn:strong_subsol}.
\end{proof}

\subsection{Proof of Lemma \ref{lemma:properties_supinf_convolution}} \label{appendix:proof_supconvolution_properties}

\begin{proof}
    For the proof of \ref{item:properties_supinf_convolution:bounded}, note that, for any $x, y \in E$, we have
    \begin{equation*}
        u(x) - \frac{\alpha}{2}d^2(x,y) \leq u(x).
    \end{equation*}
    This implies that
    \begin{equation} \label{eqn:convolution_bounded1}
        \ssup{P^\alpha[u]} = \ssup{ u - \frac{\alpha}{2}d^2 } \leq \ssup{u}.
    \end{equation}
    On the other hand, we have
    \begin{equation*}
        u(y) \leq \ssup{u - \frac{\alpha}{2}d^2(\cdot,y)} = P^\alpha[u](y).
    \end{equation*}
    It follows that
    \begin{equation} \label{eqn:convolution_bounded2}
        \iinf{u} \leq \iinf{P^\alpha[u]}.
    \end{equation}
    Now, \ref{item:properties_supinf_convolution:bounded} follows by \eqref{eqn:convolution_bounded1} and \eqref{eqn:convolution_bounded2}.
    Part \ref{item:properties_supinf_convolution:convergence_optimizers} is equivalent to
    \begin{equation*}
        P_\alpha [u] \leq u \leq P^\alpha [u] \quad \text{on } E,
    \end{equation*}
    which is immediately clear from the definitions of sup- and inf-convolutions.
    Part \ref{item:properties_supinf_convolution:decreasing} follows similarly from the definitions.
    For the proof of \ref{item:properties_supinf_convolution:semi_convex}, let $y_0 \in E$. Then, since $d$ is the Euclidean metric, we find
    \begin{equation*}
        P^\alpha[u](y) + \frac{\alpha}{2} d^2(y,y_0) = \ssup{ u + \alpha \ip{y-y_0}{\cdot-y_0} - \frac{\alpha}{2}d^2(\cdot,y_0) },
    \end{equation*}
    where the right-hand side is convex as it is a supremum over affine functions.
    By Proposition 2.1.5 and Theorem 2.1.7 of \cite{CaSi04} the claim follows.
    Lastly, \ref{item:properties_supinf_convolution:optimizers_differentiability} follows from Theorem 3.4.4 of \cite{CaSi04} by noting that the sets over which can be optimized are compact due to the boundedness of $u$ and $v$.
\end{proof}

\printbibliography
 
\end{document}